\DeclareMathAlphabet\oldmathcal{OMS}        {cmsy}{b}{n}
\SetMathAlphabet    \oldmathcal{normal}{OMS}{cmsy}{m}{n}
\DeclareMathAlphabet\oldmathbcal{OMS}       {cmsy}{b}{n}
\theoremstyle{plain}
\newtheorem{thm}{Theorem}[section]
\newtheorem{lem}[thm]{Lemma}
\newtheorem{prop}[thm]{Proposition}
\newtheorem{cor}[thm]{Corollary}
\newtheorem{defn}[thm]{Definition}
\newenvironment{remark}{\medskip \refstepcounter{thm}
\noindent  {\bf Remark \thethm}.\rm}{\,}
\newtheorem{thmint}{Theorem}
\newtheorem{corint}[thmint]{Corollary}
\newcounter{num}
\newenvironment{thmlist}{\begin{list}{(\roman{num})}{\usecounter{num}\setlength{\leftmargin}{25pt}
\setlength{\itemindent}{0pt}\setlength{\labelwidth}{20pt}\setlength{\labelsep}{5pt}\setlength{\itemsep}{0in}}}{\end{list}}
\def\amsbb{\use@mathgroup \M@U \symAMSb}
\newcommand{\Z}{\mathbb{Z}}
\newcommand{\R}{\mathbb{R}}
\newcommand{\C}{\mathbb{C}}
\newcommand{\Ha}{\mathbb{H}}
\newcommand{\re}{\operatorname{Re}}
\newcommand{\rps}{\mathbb{RP}}
\newcommand{\cps}{\mathbb{CP}}
\newcommand{\qps}{\mathbb{HP}}
\newcommand{\Aut}{\operatorname{Aut}}
\newcommand{\hol}{\operatorname{\mathfrak{hol}}}
\newcommand{\Hol}{\operatorname{Hol}}
\newcommand{\End}{\operatorname{End}}
\newcommand{\Isom}{\operatorname{Isom}}
\newcommand{\isom}{\operatorname{\mathfrak{isom}}}
\newcommand{\im}{\operatorname{Im}}
\newcommand{\Ric}{\operatorname{Ric}}
\newcommand{\Ricci}{\operatorname{Ricci}}
\newcommand{\tr}{\operatorname{tr}}
\newcommand{\Span}{\operatorname{Span}}
\newcommand{\Sym}{\operatorname{S}}
\newcommand{\ED}{\operatorname{\mathcal{ED}}}
\newcommand{\TED}{\operatorname{\mathcal{TED}}}
\newcommand{\EED}{\operatorname{\mathcal{EED}}}
\newcommand{\PM}{\operatorname{\mathscr{PM}}}
\newcommand{\EDS}{\operatorname{\mathscr{ED}}}
\newcommand{\GL}{\operatorname{GL}}
\newcommand{\U}{\operatorname{U}}
\newcommand{\SU}{\operatorname{SU}}
\newcommand{\Or}{\operatorname{O}}
\newcommand{\SO}{\operatorname{SO}}
\newcommand{\Sp}{\operatorname{Sp}}
\newcommand{\G}{\operatorname{G}}
\newcommand{\Spin}{\operatorname{Spin}}
\newcommand{\Pin}{\operatorname{Pin}}
\newcommand{\Cl}{\operatorname{Cl}}
\newcommand{\CCl}{\operatorname{\mathbb{C}l}}
\newcommand{\SR}{\operatorname{S}^2}
\newcommand{\LSp}{\operatorname{\mathfrak{sp}}}
\newcommand{\ol}[1]{\mkern 1.5mu\overline{\mkern-1.5mu#1\mkern-1.5mu}\mkern 1.5mu}
\newcommand{\contr}{\,\lrcorner\,}
\newcommand{\simarrow}{\stackrel{\sim}{\longrightarrow}}
\title[Deformations of Killing spinors]{Deformations of Killing spinors on Sasakian and 3-Sasakian manifolds}
\author{Craig van Coevering}
\address{Department of Mathematics 15-01, U.S.T.C., Anhui, Hefei 230026, P. R. China}
\email{craigvan@ustc.edu.cn}
\date{March 26, 2015}
\keywords{Killing spinor, Sasaki-Einstein, 3-Sasakian, Einstein deformation}
\subjclass{Primary 53C25, Secondary 32Q20}
\begin{document}

\begin{abstract}
We consider some natural infinitesimal Einstein deformations on Sasakian and 3-Sasakian manifolds.
Some of these are infinitesimal deformations of Killing spinors and further some integrate to actual Killing spinor deformations.  In particular, on 3-Sasakian 7 manifolds these yield infinitesimal Einstein deformations
preserving 2, 1, or none of the 3 independent Killing spinors.  Toric 3-Sasakian manifolds provide non-trivial examples with
integrable deformation preserving precisely 2 Killing spinors.  Thus in contrast to the case of parallel spinors the dimension
of Killing spinors is not preserved under Einstein deformations but is only upper semi-continuous.
\end{abstract}

\maketitle

\section*{Introduction}

Let $M$ be an n-dimensional Riemannian spin manifold with spinor bundle $\Sigma$.  A Killing spinor is a non-trivial
section $\psi\in\Gamma(\Sigma)$ with
\begin{equation}\label{eq:K-S-int}
 \nabla_X \psi =cX\cdot\psi,
\end{equation}
for some constant $c$, where $\nabla$ is the Levi-Civita connection, $X$ any tangent vector, and $X\cdot\psi$ denotes
Clifford multiplication.  An easy computation shows that $\Ric_g =4(n-1)c^2g$.  Thus $c$ must be either purely imaginary in
which case $M$ is non-compact, $c=0$ with $\psi$ a parallel spinor and $M$ is Ricci-flat, or $c$ is real and
$M$ is positive Einstein and compact assuming completeness.  In the latter case $\psi$ is a \emph{real Killing spinor}.
We will only consider real Killing spinors with $c\neq 0$.  Since $c$ is rescaled by homotheties of the metric, only its
sign is of significance.  We denote by $N_{+}$ (respectively $N_{-}$) the dimension of the space of Killing spinors
with $c>0$ (respectively $c<0$).

Killing spinors are of interest in physics in supergravity and string theories~\cite{DufNilPop86}.  But they are also of interest
purely mathematically.  See~\cite{BFGK91} for a survey.  Much work has been done in classifying manifolds admitting a Killing
spinor.  C. B\"{a}r~\cite{Bar93} classified simply connected manifolds admitting a real Killing spinor in terms of the
underlying geometry of $(M,g)$.  The classification is given in terms of the holonomy of the metric cone
$(C(M),\ol{g}),\ C(M)=\R_{+} \times M,$ $\ol{g}=dr^2 +r^2 g$.  The argument in~\cite{Bar93} is essentially that
the connection $\nabla_X -cX$ on $\Sigma$ is identified with the Levi-Civita connection $\ol{\nabla}$ of $\ol{g}$
on $\ol{\Sigma}$ (the spin bundle of $C(M)$ when n is even, and half-spin bundle when n is odd).
Then the classification is in terms of irreducible holonomies admitting a parallel spinors~\cite{Wan89}.  See
Table~\ref{tab:Kill-spin} for the classification.  Therefore, just as for the irreducible reduced Ricci-flat holonomies there are two
cases occurring in infinitely many dimensions, the Sasaki-Einstein and 3-Sasakian manifolds, and two exceptional cases,
nearly K\"{a}hler and weak $G_2$ in dimensions 6 and 7 respectively.

Nearly K\"{a}hler structures, introduced by A. Gray in the context of weak holonomy,
are almost Hermitian structures $(g,J,\omega)$ with $\nabla_X J (X)=0$ for any $X\in TM$.  Note that for a proper nearly K\"{a}hler
structure, i.e. not K\"{a}hler, the almost complex structure $J$ is not integrable and $d\omega\neq 0$.  When $n=6$ the torsion of the
$\SU(3)$-structure is contained in a 1-dimensional subbundle.  In~\cite{Nag02} it is shown that every nearly K\"{a}hler
manifold is locally the Riemannian product of K\"{a}hler manifolds, 3-symmetric spaces, twistor spaces over positive
quaternion-K\"{a}hler manifolds and 6-dimensional nearly K\"{a}hler manifolds.  Thus most questions about nearly K\"{a}hler
manifolds reduces to proper 6-dimensional nearly K\"{a}hler manifolds.

A weak $G_2$ manifold is a 7-manifold with a vector cross product
coming from the imaginary octonians, or equivalently a \emph{stable} 3-form $\sigma\in\Omega^3$ with
$d\sigma =-\lambda\star\sigma$ with $\lambda\neq 0$ a constant.  The form $\sigma$ defines a reduction of the structure group of $M$ to
$G_2$ and thus a metric $g$, as $\G_2 \subset\SO(7)$, which is Einstein with scalar curvature $s=\frac{21}{8}\lambda^2$.  Again, the torsion of the $G$-structure lies in a 1-dimensional subbundle.
See~\cite{FKMS97} for results on weak $G_2$ manifolds including a classification of homogeneous examples.

Most interesting is perhaps $n=7$ for which, when $M$ is simply connected and not of constant curvature,
$N_{+}= 1,2,$ or 3, in which case $(M,g)$ is
said to be of type 1, 2, or 3 respectively.  Recall that the spinor representation $\mathbb{S}$ of $\Spin(7)$ is real,
$\mathbb{S}=\mathbb{S}_{\R} \otimes\C$.
Thus $M$ has a real spinor bundle $\Sigma_{\R}$, and the space of solutions to (\ref{eq:K-S-int}) is the complexification of
solutions in $\Gamma(\Sigma_{\R})$.  Each section $\psi\in\Gamma(\Sigma_{\R})$ defines a $G_2$-structure on $M$ with
stable 3-form $\sigma_{\psi}$, and there is a bijective correspondence between sections of $\mathbb{P}(\Sigma_{\R})$ and
$G_2$-structures with metric $g$ and given orientation.  If $\psi$ is a representative of such a section with $|\psi|=1$,
then $\sigma_{\psi}$ defines a weak $G_2$-structure, $d\sigma_{\psi}=-\lambda\star\sigma_\psi$, if and only if $\psi$ satisfies (\ref{eq:K-S-int}), with $\lambda=8c$.  If $(M,g)$ is type 1, then there is a unique 3-form inducing the given metric and
orientation.  If it is of type 2, then $(M,g)$ is Sasaki-Einstein but not 3-Sasakian and there is a space of compatible
3-forms parameterized by $\rps^1$.  And if it is of type 3, then $(M,g)$ is 3-Sasakian and has a space of compatible 3-forms
parameterized by $\rps^2$.  See~\cite{FKMS97}.

Note that an easy computation of the curvature of the warped product shows that $(C(M),\ol{g})$ is Ricci-flat
if and only if $(M,g)$ is Einstein with $\Ric_g =(n-1)g$.  Thus the classification as in Table~\ref{tab:Kill-spin} gives
a natural scaling in which $c=\pm\frac{1}{2}$ in (\ref{eq:K-S-int}) and $s=n(n-1)$.

\begin{table}
\caption{real Killing spinors}
\centering
\begin{tabular}{|c|c|c|c|c|}\hline
$\dim M$ & $N_+$ & $N_-$ & $\Hol(C(M))$ & geometry \\\hline\hline
$n$     & $2^{\lfloor\frac{n}{2}\rfloor}$ & $2^{\lfloor\frac{n}{2}\rfloor}$ & $Id$ & n-sphere \\\hline
$4m-1$  & 2     & 0     & $\SU(2m)$    &  Sasaki-Einstein \\\hline
$4m+1$  & 1     & 1     & $\SU(2m+1)$  &  Sasaki-Einstein \\\hline
$4m-1$  & m+1   & 0     & $\Sp(m)$     &  3-Sasakian  \\\hline
6       & 1     & 1     & $\G_2$       &  nearly K\"{a}hler \\\hline
7       & 1     & 0     & $\Spin(7)$   &  weak $\G_2$ \\\hline
\end{tabular}
\label{tab:Kill-spin}
\end{table}

We consider deformations of the Killing spinor equation (\ref{eq:K-S-int}) under deformations of $g$, both infinitesimal and
genuine.  As solutions to (\ref{eq:K-S-int}) imply that $(M,g)$ is Einstein we consider Einstein deformations.
The beginnings of a general theory of deformations of Killing spinors was developed by M. Wang~\cite{Wan91}, making use of
the work of J.-P. Bourguignon and P. Gauduchon~\cite{BouGau92} on the variations of spinors under metric variations.

More recently there has been some work on the two exceptional cases in Table~\ref{tab:Kill-spin}.  In~\cite{MorNagSem08} and~\cite{MorSem11} it is shown that the space of infinitesimal Einstein deformations of a proper nearly K\"{a}hler 6-manifold
consists of eigenspaces of the Laplace operator $\Delta$ restricted to the space $E$ of co-closed primitive $(1,1)$-forms.
If $E(\lambda)$ denotes the $\lambda$-eigenspace of $\Delta$ restricted to $E$, then the space of essential infinitesimal
Einstein deformations is $E(2)\oplus E(6)\oplus E(12)$.  The space of infinitesimal deformations of nearly K\"{a}hler structures
is $E(12)$.  Besides $S^6$, which has no Einstein deformations the only examples of proper nearly K\"{a}hler 6-manifolds are
3-symmetric spaces, $\cps^3 =\SO(5)/\U(2)$, $F(1,2)=\SU(3)/{\U(1)\times\U(1)}$, and
$S^3 \times S^3 =\SU(2)\times\SU(2)\times\SU(2)/\Delta$.  In~\cite{MorSem10} it is shown that the nearly K\"{a}hler structures on
$\cps^3$ and $S^3 \times S^3$ have no infinitesimal Einstein deformations, and on $F(1,2)$ $E(2)$ and $E(6)$ vanish while
$E(12)$ is an 8-dimensional space.

Similar results are known for weak $G_2$ manifolds.  In~\cite{AleSem11} a similar decomposition of the infinitesimal Einstein
deformations on a weak $G_2$ manifold are given.  First recall that a $G_2$-structure induces a decomposition of the 3-forms into
irreducible $G_2$-representations $\Lambda^3 =\Lambda^3_1 \oplus\Lambda^3_7 \oplus\Lambda^3_{27}$.  And there is a map
$\iota: S^2_0(T^*) \rightarrow\Lambda^3$, which on a decomposable element $\alpha\odot\beta$ is
$\iota(\alpha\odot\beta)= \alpha\wedge(\beta\contr\sigma) +\beta\wedge(\alpha\contr\sigma)$, which is an isomorphism onto
$\Lambda^3_{27}$.  It is proved in~\cite{AleSem11} that the essential infinitesimal Einstein deformations is given by the direct sum
\[ E(16)\oplus E(4)\oplus E(8),\]
where $E(16)=\{\gamma\in\Omega^3_{27} | \star d\gamma=-4\gamma\}$, $E(4)=\{\gamma\in\Omega^3_{27} | \star d\gamma=2\gamma\}$,
and $E(8)=\{\gamma\in\Omega^3_{27} | dd^*\gamma=8\gamma\}$.  The notation $E(\lambda)$ indicates that these are subspaces of
the $\lambda$-eigenspace of $\Delta$.  The space $E(16)$ is the subspace of infinitesimal deformations of weak $G_2$-structures,
or more precisely, those not fixing the metric and deforming the Killing spinor.
This space is computed on the normal homogeneous examples: the isotropy irreducible space $\SO(5)/\SO(3)$, the pinched metric on
$S^7$, and the second Einstein metric on the Aloff-Wallach space $N(1,1)=\SU(3)/\U(1)$.  The first two cases have no infinitesimal
Einstein deformations, while for the third the infinitesimal Einstein deformations correspond to $E(16)$ which is 8-dimensional.

These results might lead one to suspect that there might be some stability for Killing spinors under Einstein deformations, either
infinitesimal or integrable.  Furthermore, for the case $c=0$ in (\ref{eq:K-S-int}), i.e. parallel spinors, there are strong stability
results~\cite{Wan91,Nor10}.  Recall that a simply-connected, spin, irreducible Riemannian manifold $(M,g)$ admits a parallel
spinor if and only if the holonomy $\Hol(g)=G$ where $G=\SU(m), \Sp(m),\G_2,$ or $\Spin(7)$.  Define a $G$-manifold to be
a connected oriented manifold of dimension $2m, 4m, 7$ or $8$ respectively with a torsion-free $G$-structure with $G$ from this list.
This means $\Hol(g)\subseteq G$.  Thus a $G$-manifold $M$ is Ricci-flat, and we define $\mathcal{W}_G$ to be the moduli space
of torsion-free $G$-structures on $M$, $\mathcal{M}_G$ the moduli space of $G$-metrics, i.e. metrics induced by a torsion-free
$G$-structure, and $\mathcal{M}_0$ the moduli space of Ricci-flat metrics on $M$.  Here the moduli spaces are defined by quotienting
by diffeomorphisms isotopic to the identity.
We have the following result of J. Nordstr\"{o}m extending similar results of M. Wang~\cite{Wan91}.
\begin{thmint}[\cite{Nor10}]
Let $M$ be a compact $G$-manifold with $G=\SU(m), \Sp(m),\G_2,$ or $\Spin(7)$.  Then $\mathcal{M}_G$ is open in $\mathcal{M}_0$, actually a union of connected components.
Furthermore, $\mathcal{M}_G$ is a smooth manifold and the natural map
\[ m: \mathcal{W}_G \rightarrow \mathcal{M}_G \]
that sends a torsion-free $G$-structure to the metric it defines is a submersion.
\end{thmint}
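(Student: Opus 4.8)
The plan is to fix a $G$-metric $g$ with $\Hol(g)\subseteq G$ and to run the whole argument at the level of infinitesimal deformations, comparing the premoduli space of Ricci-flat metrics with that of torsion-free $G$-structures by means of the parallel spinor. First I would set up local models through Ebin's slice theorem: near $[g]$ the moduli space $\mathcal{M}_0$ is the quotient of a premoduli space $\mathcal{P}_0$ by the finite group $\Isom(M,g)/\Isom_0(M,g)$, where $\mathcal{P}_0$ is the zero set, inside a slice to the diffeomorphism action, of the gauge-fixed Ricci tensor. Its Zariski tangent space at $g$ is the space $\varepsilon(g)$ of infinitesimal Einstein deformations, namely the transverse-traceless $h\in\Gamma(\Sym^2 T^*M)$ in the kernel of the Lichnerowicz Laplacian. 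A priori $\mathcal{P}_0$ is only a real-analytic set and could be obstructed, so the real content of the theorem is to show that near $g$ it is a smooth manifold cut out by the $G$-structure.

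Next I would build the comparison on tangent spaces using the parallel spinor $\psi$. By the Bourguignon--Gauduchon calculus~\cite{BouGau92} a symmetric $2$-tensor $h$ induces a first-order variation of the spinor bundle and of $\psi$, and, following M. Wang~\cite{Wan91}, Clifford contraction with $\psi$ identifies $\Gamma(\Sym^2 T^*M)$ with a subspace of spinor-valued $1$-forms; under this identification the infinitesimal Einstein condition on $h$ becomes exactly the harmonicity of the associated form. On the $\mathcal{W}_G$ side, for each $G$ in the list the deformation theory of torsion-free $G$-structures is unobstructed (Joyce for $\G_2$ and $\Spin(7)$, Tian--Todorov for $\SU(m)$, and its analogue for $\Sp(m)$), so $\mathcal{W}_G$ is a smooth manifold whose tangent space is the corresponding harmonic-form space (Joyce's $H^3(M;\R)$ for $\G_2$, the analogous cohomological piece for $\Spin(7)$, and the Calabi--Yau and hyperk\"{a}hler deformation spaces for $\SU(m)$ and $\Sp(m)$). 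The differential $dm$ is then the passage from such a harmonic form to its symmetric-tensor image $h$, and I would check that its kernel is the space of compatible $G$-structures fixing $g$ — trivial for $\G_2$ and $\Spin(7)$, but a circle for $\SU(m)$ and a two-sphere for $\Sp(m)$, paralleling the $\rps^1$ and $\rps^2$ of Killing-spinor structures on the cone — so that $dm$ has constant rank.

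The main obstacle is surjectivity of $dm$ onto $\varepsilon(g)$: one must show that \emph{every} infinitesimal Einstein deformation of $g$ is induced by a deformation of the torsion-free $G$-structure, with nothing left over. This is a holonomy-representation statement comparing the action of $G$ on the trace-free part of $\Sym^2 T^*M$ with the forms singled out by $\psi$, and it is exactly here that the special nature of the list $\SU(m),\Sp(m),\G_2,\Spin(7)$ enters: for precisely these groups the parallel-spinor identification is an isomorphism onto $\varepsilon(g)$, leaving no extra Einstein directions. I expect this step, rather than any formal manipulation, to carry the weight of the argument.

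Granting surjectivity, the three conclusions follow together. Since $\mathcal{W}_G$ is smooth and $dm$ has constant rank, $m$ is a submersion and its image $\mathcal{M}_G$ is a smooth manifold, which gives the second and third assertions. For openness, observe that the premoduli space $\mathcal{P}_G$ of $G$-metrics sits inside $\mathcal{P}_0$ as a smooth submanifold whose dimension equals $\dim\varepsilon(g)$, the dimension of the Zariski tangent space of $\mathcal{P}_0$ at $g$; since the local dimension of a real-analytic set cannot exceed that of its Zariski tangent space, $g$ is a smooth point of $\mathcal{P}_0$ and hence $\mathcal{P}_0=\mathcal{P}_G$ in a neighbourhood of $g$. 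Passing to the slice quotient shows that $\mathcal{M}_G$ is open in $\mathcal{M}_0$; in particular every Ricci-flat metric near a $G$-metric is again a $G$-metric, which is the unobstructedness at the heart of the statement.
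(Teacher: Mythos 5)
This statement is quoted in the paper's introduction as a result of Nordstr\"om~\cite{Nor10} (extending M. Wang~\cite{Wan91}); the paper gives no proof of it, so there is no internal argument to compare against. Your proposal is a faithful reconstruction of the strategy of the cited sources: Ebin's slice and Koiso's premoduli space $Z$ with $T_gZ=\EED(g)$ on the Einstein side, unobstructedness of torsion-free $G$-structures on the $\mathcal{W}_G$ side, Wang's parallel-spinor identification of $\EED(g)$ with harmonic spinor-valued $1$-forms to get surjectivity of $dm$, and the dimension count (smooth submanifold of a real-analytic set whose dimension equals that of the Zariski tangent space) to force local equality of the two premoduli spaces and hence openness. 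You also correctly locate the real weight of the argument in the surjectivity step. Two small points of care: first, in the quoted form of Wang's eigenvalue dichotomy the two cases have eigenvalues $nc$ and $c(2-n)$, which coincide only because $c=0$ here, and one still needs Wang's representation-theoretic argument that the $\Theta$-branch does not produce Einstein directions outside the image of $dm$ --- asserting that the identification ``is an isomorphism onto $\varepsilon(g)$'' is precisely the content to be proved, not a consequence of the list of groups alone; second, your appeal to ``constant rank'' of $dm$ implicitly assumes $\Hol(g)$ is exactly $G$, whereas the theorem allows $\Hol(g)\subseteq G$, where the fibers of $m$ can jump --- what is actually needed (and what Nordstr\"om proves) is surjectivity of $dm$ at every point together with the fibration structure of $m$, not global constancy of its rank.
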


This article will show that there is no analogous result for Killing spinors.  Under Einstein deformations
$N_{+}, N_{-}$ are merely upper semi-continuous and can drop under infinitesimal and integrable Einstein deformations.
In particular, the \emph{toric} 3-Sasakian 7-manifolds of~\cite{BGMR98} have interesting infinitesimal Einstein deformations.
Let $H^1(\mathcal{A}^\bullet)$ be the first cohomology of the complex (\ref{eq:Dol-comp}), that is the first order
deformations of the complex structure of the Reeb foliation $\mathscr{F}_{\xi}$.  We know that
$\dim_{\C}H^1(\mathcal{A}^\bullet)=b_2(M)-1$ if $(M,g)$ is a toric 3-Sasakian 7-manifold~\cite{vanCo06}.

\begin{thmint}\label{thmint: inf-Einst-tor-3S}
Let $(M,g)$ be a 3-Sasakian 7-manifold with $\dim_{\C}H^1(\mathcal{A}^\bullet)>0$, e.g. a toric 3-Sasakian 7-manifold with
$b_2(M)\geq 2$.  Thus $(M,g)$ has three linearly independent Killing spinors.
Then there exist infinitesimal Einstein deformations of $g$ preserving two, one, and zero dimensional subspaces of
the Killing spinors.
\end{thmint}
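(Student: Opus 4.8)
The plan is to reduce the statement to a clean piece of representation theory on the three Killing spinors, and then produce the three types of deformation by a single class in $H^1(\mathcal{A}^\bullet)$ together with its $\Sp(1)$-translates. Since $(M,g)$ is $3$-Sasakian of dimension $7$, its cone $(C(M),\ol{g})$ is hyperk\"ahler with $\Hol(C(M))=\Sp(2)$, and by Table~\ref{tab:Kill-spin} the space $S$ of Killing spinors with $c>0$ is three-dimensional. Via B\"ar's identification \cite{Bar93} this $S$ is the space of $\Sp(2)$-parallel spinors on the cone, and it carries the vector (spin-$1$) representation of the quaternionic $\mathfrak{sp}(1)$. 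The $2$-sphere of Sasakian structures $\xi_n=n_1\xi_1+n_2\xi_2+n_3\xi_3$, $|n|=1$, corresponds to the $2$-sphere of cone complex structures $J_n$, each of which makes $C(M)$ Calabi--Yau with $\Hol\subseteq\SU(4)$ and hence carries a two-dimensional space of parallel spinors $V_n\subset S$. The first step is to identify $V_n=n^\perp$. This matches the $\Spin(7)$ intermediate case: a type-$1$ point of $\rps^2$ is a single spinor line $\langle d\rangle$ (a weak $\G_2$-structure), and $\SU(4)_{J_n}\subset\Spin(7)_d$ forces $d\in V_n$, i.e.\ $d\perp n$. Equivalently, the type-$2$ count of the excerpt is reproduced, since $\mathbb{P}(V_n)=\mathbb{P}(n^\perp)=\rps^1$ is exactly the space of weak $\G_2$-structures refining a single Sasaki--Einstein structure.

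Next I would treat a single deformation. A nonzero class $\phi\in H^1(\mathcal{A}^\bullet)$ for the $\xi_3$-foliation is a first-order deformation of the transverse complex structure; on the cone it is a genuine deformation of $J_{e_3}$ \emph{out of} the hyperk\"ahler $2$-sphere, keeping $C(M)$ Calabi--Yau to first order. Hence the two $\SU(4)_{J_{e_3}}$-parallel spinors spanning $V_{e_3}=e_3^\perp$ persist: using the work of Bourguignon--Gauduchon \cite{BouGau92} and Wang \cite{Wan91} to identify spinors across the metric variation, the linearized Killing equation is solvable for every $\psi\in V_{e_3}$, so the associated infinitesimal Einstein deformation $h_3$ preserves this two-dimensional subspace. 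Moreover the deformation tilts $J_{e_3}$ to $J_{e_3}+t\nu$ with $\nu\perp e_3$ and $\nu\neq0$, so $V_{e_3}$ tilts off the $e_3$-axis and the third Killing spinor $\langle e_3\rangle$ is destroyed. Thus the obstruction $O(h_3,\cdot)\colon S\to W$ to preserving a spinor is a linear map with kernel exactly $e_3^\perp$, i.e.\ $O(h_3,\psi)=\langle e_3,\psi\rangle\,w_3$ with $w_3\neq0$. This is the two-dimensional case.

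I would then obtain the one- and zero-dimensional cases by the $\Sp(1)$-action. Transporting $\phi$ gives classes $\phi_a\in H^1(\mathcal{A}^\bullet)$ for the $\xi_a$-foliation, $a=1,2,3$, hence Einstein deformations $h_a$ with $O(h_a,\psi)=\langle e_a,\psi\rangle\,w_a$ and $w_a=\rho(g_a)w_3$ by equivariance. The obstruction is additive, $O\bigl(\sum_a h_a,\psi\bigr)=\sum_a\langle e_a,\psi\rangle\,w_a$, so $h_1+h_3$ preserves exactly $V_{e_1}\cap V_{e_3}=e_1^\perp\cap e_3^\perp=\langle e_2\rangle$, a one-dimensional subspace, while $h_1+h_2+h_3$ preserves exactly $\bigcap_a e_a^\perp=0$. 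Both statements hold provided $w_1,w_2,w_3$ are linearly independent in $W$; note that two $2$-planes in $S$ always meet in at least a line, so the full sum over all three Reeb foliations is genuinely needed to reach dimension zero.

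The hard part will be the nondegeneracy. Everything reduces to $w_3\neq0$ together with the independence of $w_1,w_2,w_3$: the latter is automatic from $\Sp(1)$-equivariance, since $w_a\mapsto e_a$ intertwines the vector representation and so $w_1,w_2,w_3$ span $W$ as soon as $w_3\neq0$. The real content is therefore the claim that a nonzero transverse complex-structure deformation genuinely destroys the third Killing spinor---equivalently, that the Calabi--Yau-but-not-hyperk\"ahler deformation moves $\Hol(C(M))$ from $\Sp(2)$ into $\SU(4)$ rather than staying in $\Sp(2)$. I expect the main technical work to be the explicit Bourguignon--Gauduchon computation of the spinor variation establishing $w_3\neq0$, together with the routine checks that each $h_a$ is a bona fide infinitesimal Einstein (transverse-traceless, same Lichnerowicz eigenvalue) deformation and that $h_1+h_2+h_3\neq0$.
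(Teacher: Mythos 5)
Your overall strategy is the one the paper follows: identify the three Killing spinors with the $\Sp(1)$-representation $\gamma_2=S^2(\mu_2)$, show that a single nonzero class $\beta\in\mathcal{H}^1_{\mathcal{A}}(\xi)$ yields an infinitesimal Einstein deformation whose preserved subspace is exactly $\xi^{\perp}$, and then sum over two and three linearly independent Reeb fields to cut the preserved subspace down to dimensions one and zero; this is Proposition~\ref{prop:inf-def-3Sasaki} combined with Propositions~\ref{prop:3Sasak-def-ele} and~\ref{prop:3Sasak-def-7dim}. But there are two genuine gaps. First, you defer the entire analytic content to ``an explicit Bourguignon--Gauduchon computation establishing $w_3\neq 0$''. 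That computation is the heart of the matter (Proposition~\ref{prop:inf-def-3Sasaki}): one uses the vanishing $H^1(Z,\mathcal{O}(\mathbf{L}))=0$ to show $h(X)\in\mathcal{D}=\cap_i D_i$, and then the nondegeneracy of the transverse complex symplectic form $\vartheta$ to see that the obstruction against the middle spinor $\sigma_1$ cannot vanish unless $h=0$. Moreover your heuristic for why $\sigma_1$ dies --- ``the deformation tilts $J_{e_3}$ to $J_{e_3}+t\nu$ with $\nu\perp e_3$, so $V_{e_3}$ tilts off the $e_3$-axis'' --- is not the right mechanism and contradicts your own correct previous sentence: the deformation leaves the hyperk\"ahler $2$-sphere while keeping $J_{e_3}$ Calabi--Yau, so $V_{e_3}$ is preserved on the nose; a tilt within the $2$-sphere would be a trivial deformation by Theorem~\ref{thm:PedPo}. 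What must be shown is that the first-order holonomy genuinely leaves $\Sp(2)$, and that is precisely the nonvanishing computation you postpone.

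Second, the claim that $w_1,w_2,w_3$ are linearly independent ``automatically by $\Sp(1)$-equivariance'' does not hold up. The vector $w_3$ is not invariant under the stabilizer $\U(1)\subset\Sp(1)$ of $\xi_3$ (because $\beta$ need not be), so the cyclic $\Sp(1)$-module it generates inside the infinite-dimensional target need not be a copy of the vector representation with $w_3$ on the fixed axis; and the transported classes $\phi_a$ are only defined up to that stabilizer, so the $w_a$ are not canonically ``$e_a$''. The paper does not transport a single class at all: it takes arbitrary nonzero $\beta,\beta',\beta''$ for three linearly independent Reeb fields and works with the pairwise intersections of the three preserved $2$-planes, checking that the summed deformation has nonzero obstruction on each of three spanning spinors. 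In your formalism that verifies $w_a\neq 0$ for each $a$; to conclude that the kernel of $\sum_a\langle e_a,\cdot\rangle\,w_a$ is zero rather than one-dimensional you still need to exclude a linear relation among the $w_a$, and equivariance alone does not supply that.
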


It is unknown whether the infinitesimal Einstein deformations preserving only 1-dimensional subspaces of Killing spinors or none
are integrable.  But in Section~\ref{sec:int-def} some infinitesimal Einstein deformations are proved to be
integrable.  For example the infinitesimal deformations of a toric 3-Sasakian 7-manifold in the theorem preserving a
2-dimensional subspace of Killing spinors can be shown to be integrable.
\begin{thmint}\label{thmint:Einst-def-tor-3S}
Let $(M,g)$ be a \emph{toric} 3-Sasakian 7-manifold, so $N_{+}=3$.  There exists an effective space
$\mathcal{U}\subset\C^{b_2(M)-1}$ of Einstein deformations of $g=g_0$. For $t\in\mathcal{U}$ and $t\neq 0$,
$g_t$ is Sasaki-Einstein but not 3-Sasakian. Thus $g_t, t\neq 0,$ admits only a two dimensional space of Killing spinors
($N_{+} =2, N_{-} =0$).
\end{thmint}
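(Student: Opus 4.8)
The plan is to fix one Sasakian structure out of the $S^2$ family comprising the 3-Sasakian data and to deform \emph{only} the transverse complex structure of its Reeb foliation, keeping the Reeb field and the transverse symplectic class fixed. Each such deformation will remain Sasaki-Einstein---hence retain the two Killing spinors attached to that one fixed structure---while the ambient 3-Sasakian structure, and with it the third Killing spinor, is destroyed.

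First I would fix a Reeb field $\xi=\xi_1$. Its foliation $\mathscr{F}_{\xi}$ has leaf space the twistor space $Z$, a Fano K\"{a}hler-Einstein orbifold of complex dimension $3$, and $g=g_0$ is the lift of its transverse K\"{a}hler-Einstein metric. The toric $T^2$-action on $M$ descends to a torus action on $Z$. I would then identify $H^1(\mathcal{A}^\bullet)\cong\C^{b_2(M)-1}$ with the infinitesimal deformations of the transverse complex structure that fix the Reeb field and the transverse K\"{a}hler class; toricity allows one to write these explicitly and torus-equivariantly. The first analytic step is to integrate them: I would run a Kuranishi argument for the transverse Dolbeault complex $\mathcal{A}^\bullet$, showing that the quadratic obstruction map vanishes---an unobstructedness I expect to follow from the torus-equivariance of the deformations---so that the first-order classes integrate over an open set $\mathcal{U}\subset\C^{b_2(M)-1}$.

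For each deformed transverse complex structure I would then produce a transverse K\"{a}hler-Einstein metric by solving the transverse Monge--Amp\`ere equation. This is where I expect the main difficulty: K\"{a}hler-Einstein existence on Fano orbifolds is delicate in general, but here the toric hypothesis is essential, since torus-invariance reduces the problem to a real Monge--Amp\`ere equation on the moment polytope, to which the existence theory for toric Sasaki-Einstein metrics applies. This yields, for each $t\in\mathcal{U}$, a Sasaki-Einstein metric $g_t$ on $M$ with $g_0=g$.

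Finally I would settle the two remaining assertions. Distinct parameters give non-isomorphic transverse complex structures on $Z$, whence the family $\mathcal{U}$ is effective; and for $t\neq0$ this complex structure is genuinely changed, so $g_t$ cannot be 3-Sasakian---a 3-Sasakian structure would force $Z$ to carry the rigid twistor complex structure determined by the quaternionic data, which no nonzero class in $H^1(\mathcal{A}^\bullet)$ preserves. The Killing spinor count then follows from Table~\ref{tab:Kill-spin}: the undeformed $g_0$ is 3-Sasakian with $\Hol(C(M))=\Sp(2)$ and $N_+=3$, whereas for $t\neq0$ the manifold is Sasaki-Einstein but not 3-Sasakian, so $\Hol(C(M))=\SU(4)$, giving $N_+=2$ and $N_-=0$.
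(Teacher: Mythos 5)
Your overall strategy (fix $\xi=\xi_1$, deform only the transverse complex structure of $\mathscr{F}_{\xi}$, re-solve for a transverse K\"ahler--Einstein metric, then invoke rigidity of the twistor/quaternionic structure and B\"ar's classification) is the same as the paper's. But the central analytic step is wrong as proposed. You solve the transverse Monge--Amp\`ere equation by ``reducing to a real Monge--Amp\`ere equation on the moment polytope'' via the existence theory for \emph{toric} Sasaki--Einstein metrics. That machinery does not apply here: the leaf space $Z$ has complex dimension $3$, while the maximal torus of Sasakian automorphisms is only $T^3\subset\Aut(M,\xi_1)$, so $Z$ carries merely a $T^2$-action and is not a toric variety (the paper explicitly warns in Remark 6.1 that a toric 3-Sasakian manifold is generally not toric as a Sasakian manifold). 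Without full torus symmetry there is no moment-polytope reduction, and you are left with a genuine Fano K\"ahler--Einstein existence problem that you have not solved. The paper avoids this entirely by a \emph{perturbative} argument (Theorem~\ref{thm:def-Sasak-Einst}, from \cite{vanCo12}): since $g_0$ is already transversely K\"ahler--Einstein, one applies the implicit function theorem to the (reduced) scalar curvature map; the cokernel of the linearization $\mathbb{L}_g$ consists of holomorphy potentials (Matsushima), and working $T^3$-equivariantly and projecting onto the complement $W_{k,0}$ of $\sqrt{-1}\mathcal{H}^{\mathfrak{g}}$ makes the linearization an isomorphism. This is where the ``maximal torus'' hypothesis is actually used --- not to make the problem toric, but to neutralize the automorphism obstruction in the IFT.

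A secondary gap: you assert that unobstructedness of the transverse Kuranishi problem ``follows from torus-equivariance.'' Equivariance does not imply unobstructedness in general. The paper's reason is Proposition~\ref{prop:vers-def}: $H^2(\mathcal{A}^\bullet)=0$ by (transverse) Serre duality plus Kodaira--Nakano vanishing, using $\Ric^T>0$. Your remaining steps --- effectiveness, non-3-Sasakianity for $t\neq 0$ via rigidity of the twistor data (the paper cites Pedersen--Poon, Theorem~\ref{thm:PedPo}), and the holonomy/spinor count $\Hol(C(M))=\SU(4)$, $N_+=2$, $N_-=0$ --- are in line with the paper.
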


We also prove in Theorem~\ref{thm:3Sasak-int-def} that certain infinitesimal Einstein deformations on a general 3-Sasakian manifold
are integrable.  In Section~\ref{subsec:3Sasak-def} we see that this has implications for the local premoduli space of Einstein
metrics.
\begin{corint}\label{corint:Einst-def-3S}
Suppose $(M,g)$ is 3-Sasakian with $\dim_{\C} H^1(\mathcal{A}^\bullet) >0$, e.g. a toric 3-Sasakian 7-manifold with $b_2(M)\geq 2$.
Then either there exist Einstein deformations of $g$ preserving no Killing spinors, or the Einstein premoduli space is singular.
\end{corint}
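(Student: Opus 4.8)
The plan is to read the statement as a dichotomy governed by the non-linearity of integrability, combining Theorem~\ref{thm:3Sasak-int-def} with Koiso's description of the Einstein premoduli space. First I would recall that, after fixing a slice for the diffeomorphism group and normalizing the total scalar curvature, the premoduli space $\mathcal{P}$ of Einstein metrics near $g$ is real-analytically isomorphic to the zero set $\Phi^{-1}(0)$ of an analytic map $\Phi\colon\mathcal{O}\to\varepsilon(g)$ defined on a neighborhood $\mathcal{O}$ of $0$ in the space $\varepsilon(g)$ of essential infinitesimal Einstein deformations of $g$, with $\Phi(0)=0$ and $d\Phi_0=0$ (Koiso; see also Besse). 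Since $d\Phi_0=0$, the Zariski tangent space of $\mathcal{P}$ at $[g]$ equals $\varepsilon(g)$, so $[g]$ is a smooth point precisely when $\dim_{[g]}\mathcal{P}=\dim\varepsilon(g)$, equivalently when every infinitesimal Einstein deformation integrates.

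Next I would exhibit inside $\varepsilon(g)$ a deformation $h_0$ preserving no Killing spinors. As $\dim_{\C}H^1(\mathcal{A}^\bullet)>0$, Theorem~\ref{thm:3Sasak-int-def} provides, for the Sasaki-Einstein structure $\xi_\zeta$ attached to each $\zeta$ in the $2$-sphere of Sasakian structures, an integrable deformation $\phi_\zeta\in\varepsilon(g)$ whose integral curve is Sasaki-Einstein but not 3-Sasakian, so that $\phi_\zeta$ preserves exactly the proper subspace $P_\zeta$ of Killing spinors adapted to $\xi_\zeta$. The residual $\Sp(1)$ of automorphisms of the 3-Sasakian structure acts as $\SO(3)$ on the sphere and on the Killing spinors, carrying $\phi_\zeta$ to $\phi_{\zeta'}$ and $P_\zeta$ to $P_{\zeta'}$, so the planes $P_\zeta$ genuinely move with $\zeta$. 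Because $\varepsilon(g)$ is a vector space, any finite combination $h_0=\sum_i\phi_{\zeta_i}$ again lies in $\varepsilon(g)$, and the Killing spinors it fixes to first order lie in $\bigcap_i P_{\zeta_i}$; choosing the $\zeta_i$ in general position makes this intersection trivial. In dimension $7$ this recovers precisely the zero-spinor deformation furnished by Theorem~\ref{thmint: inf-Einst-tor-3S}.

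With $h_0$ in hand I would run the dichotomy. If $h_0$ fails to integrate, then some element of $\varepsilon(g)=T_{[g]}\mathcal{P}$ is not realized by a curve in $\mathcal{P}$, forcing $\dim_{[g]}\mathcal{P}<\dim\varepsilon(g)$, so $[g]$ is a singular point of the premoduli space; this is the second alternative. If instead $h_0$ integrates, there is an analytic family $g_t$ of Einstein metrics with $g_0=g$ and $\dot g_0=h_0$, and it remains to see that $N_{+}(g_t)=0$ for small $t\neq0$. Here I would invoke the variation-of-spinors formalism of Bourguignon--Gauduchon and Wang~\cite{BouGau92,Wan91} to pull the Killing spinor operators back to a fixed bundle along the family: were $N_{+}(g_t)>0$ along some sequence $t\to0$, then by upper semicontinuity of $N_{+}$ and analyticity of the family one would extract a unit Killing spinor $\psi_0$ of $g_0$ whose first variation solves the linearized Killing spinor equation in the direction $h_0$, contradicting that $h_0$ preserves no Killing spinor. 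Hence $N_{+}(g_t)=0$ for small $t\neq0$, which is the first alternative.

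I expect the formal half of the argument --- that a smooth premoduli point forces every infinitesimal deformation to integrate --- to be routine once Koiso's description is in place, and the main difficulty to lie in the two spinorial inputs: pinning down the subspace $P_\zeta$ preserved by each $\phi_\zeta$ and verifying that a generic combination annihilates all Killing spinors, and the semicontinuity-plus-first-variation step that promotes ``no infinitesimally preserved spinor'' to ``no Killing spinor on $g_t$.'' The latter is exactly where the classification of Table~\ref{tab:Kill-spin} is implicitly used, since any surviving spinor on $g_t$ would reinstate one of the special geometries that $h_0$ was constructed to destroy.
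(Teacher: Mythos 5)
Your proposal follows the paper's own route: the paper proves this corollary by combining Theorem~\ref{thm:3Sasak-int-def} (integrability of the deformations $h^{\beta,\xi}$ for $\beta\in\re\mathcal{H}^1_{\mathcal{A}}(\xi)$) with Propositions~\ref{prop:3Sasak-def-ele} and~\ref{prop:3Sasak-def-7dim}, which show that sums $\sum_i h^{\beta_i,\xi_i}$ over distinct Reeb fields preserve no Killing spinors, and then runs exactly your dichotomy on Koiso's premoduli space, with the semicontinuity/first-variation argument upgrading the infinitesimal statement along an integral curve. Your ``general position'' construction of $h_0$ and the $\Sp(1)$-equivariance of the preserved planes $P_\zeta$ is precisely the content of those two propositions (and you correctly note that in dimension $7$ three Reeb fields are needed, since two planes in the three-dimensional space of Killing spinors always meet).

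One step should be repaired. The implication ``$h_0$ fails to integrate $\Rightarrow$ the premoduli space is singular'' does not follow from your identification of the Zariski tangent space of $\PM(g)$ with $\EED(g)$: Koiso's theorem only exhibits $\PM(g)$ as a real analytic subset of a manifold $Z$ with $T_gZ=\EED(g)$, and an analytic set can be a smooth germ (for instance an isolated point) while admitting fewer integrable directions than $\dim\EED(g)$, so ``not every infinitesimal Einstein deformation integrates'' is not equivalent to singularity. The correct argument uses ingredients you already have: if $\PM(g)$ were a manifold germ at $g$, its tangent space would be a linear subspace of $\EED(g)$ containing each of the integrable directions $\phi_{\zeta_i}$ furnished by Theorem~\ref{thm:3Sasak-int-def}, hence containing $h_0=\sum_i\phi_{\zeta_i}$, and every tangent vector to a manifold is realized by a curve, so $h_0$ would integrate. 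With that substitution your proof coincides with the paper's.
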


In Section~\ref{sec:prelim} we review necessary background on the deformations of Einstein metrics, the variation of
spin structures, and deformations of Killing spinors.  In Section~\ref{sec:S-E-KS-def} we show that infinitesimal deformations
of the transversal complex structure of a Sasaki-Einstein manifold give infinitesimal Einstein deformations.  We then give
the basic results on these deformations regarding the behavior of Killing spinors, on Sasaki-Einstein and 3-Sasakian manifolds.
In Section~\ref{sec:int-def} we give some results on when these infinitesimal Einstein deformations integrate to genuine Einstein deformations.  In Section~\ref{subsec:3Sasak-def} we study the space of these infinitesimal Einstein deformations on a 3-Sasakian manifold more closely, and we prove Theorem~\ref{thmint: inf-Einst-tor-3S}, Theorem~\ref{thmint:Einst-def-tor-3S}
and Corollary~\ref{corint:Einst-def-3S}.  In Section~\ref{subsec:examples} the examples of toric 3-Sasakian 7-manifolds from~\cite{BGMR98} provide non-trivial examples of the above results.

\subsection*{Acknowledgements}

I would like to thank the Max Planck Institute for Mathematics for their hospitality and excellent research environment.
Most of the research for this article was done during a visit during the academic year 2011-2012.

\section{Preliminaries}\label{sec:prelim}

\subsection{Spinors}\label{subsec:spinors}

We review the explicit construction of the spin representations via explicit representations of the Clifford algebras $\Cl(n)$,
For more details see~\cite{LawMi89} and~\cite{BFGK91}.
These representation will give the complex representations of the complex Clifford algebras $\CCl(n)=\Cl(n)\otimes\C$.
Suppose $V$ is a real vector space of dimension $n=2m$ with a metric $g$ and compatible almost complex structure $I:V\rightarrow V$.
We have the decomposition $V\otimes\C = V^{1,0}\oplus V^{0,1}$, and the spinor space is
\[ \mathbb{S}(V):=\Lambda ^{*,0} V =\Lambda^* V^{1,0}.\]
The representation $c:\CCl(V) \rightarrow\End(\mathbb{S}(V))$ is defined by its action on $V\otimes\C$.
For $v\in V^{1,0}$ define $c(v):=\sqrt{2}v\wedge\cdot$, and for $w\in V^{0,1}$ define $c(w) :=-\sqrt{2}w\contr\cdot$, where the contraction is induced by the metric $g$ on $V$ extended complex bilinearly.

Recall we have the splitting $\Cl(V)=\Cl_0(V)\oplus\Cl_1(V)$ into even and odd elements making $\Cl(V)$ into a \emph{superalgebra},
that is
\[ \Cl_r(V)\cdot\Cl_s(V)\subseteq\Cl_t(V)\text{  with  }  t=r+s \mod 2.\]
We have $\Pin(n)\subset\Cl(n)$, where $\Pin(n)$ is the universal cover of $\Or(n)$, and $\Spin(n)\subset\Cl_0(n)$ is the
universal cover of $\SO(n)$.

The representation has a splitting preserved by the superalgebra structure of $\CCl(V)$
\begin{equation}\label{eq:Spin-rep}
\mathbb{S}(V)=\mathbb{S}_{2m} =\mathbb{S}_{2m}^+ \oplus\mathbb{S}_{2m}^-,
\end{equation}
that is $\CCl_0(V)\cdot\mathbb{S}_{2m}^\pm \subseteq \mathbb{S}_{2m}^\pm$ while
$\CCl_1(V)\cdot\mathbb{S}_{2m}^\pm \subseteq \mathbb{S}_{2m}^\mp$.
The restriction of $\mathbb{S}(V)$ to $\Spin(2m)$ is the \emph{spin representation}, which splits into components
in (\ref{eq:Spin-rep}) which are irreducible.

As in~\cite{Wan89}, we define $\mathbb{S}_{2m}^+$ to be the half-spin representation with highest weight
$\frac{1}{2}(x_1 +\cdots +x_m)$, while $\mathbb{S}_{2m}^-$ has highest weight $\frac{1}{2}(x_1 +\cdots +x_{m-1} -x_m)$, with
the usual choice of fundamental weights.
If $\{e_1,\ldots, e_{2m}\}$ is an orthonormal basis of $V$, then $\mathbb{S}_{2m}^\pm$ are the $+1$ and $-1$ eigenspaces of
$\omega_{\C} =\bigl(\sqrt{-1}\bigr)^{m^2 +2m}e_1 \cdots e_{2m}$.

\begin{remark}
Note that this differs from the convention in~\cite{LawMi89}, where $\mathbb{S}_{2m}^\pm$ are defined as the
$+1$ and $-1$ eigenspaces of $\omega_{\C} =\bigl(\sqrt{-1}\bigr)^{m}e_1 \cdots e_{2m}$,
by a factor of $(-1)^{\frac{m(m+1)}{2}}$.
\end{remark}

Explicitly, we have
\begin{equation}
\begin{split}
\mathbb{S}_{2m}^+ & = \Lambda^{m,0}V \oplus\Lambda^{m-2,0}V\oplus\cdots ,\\
\mathbb{S}_{2m}^- & = \Lambda^{m-1,0}V\oplus\Lambda^{m-3,0}V\oplus\cdots.
\end{split}
\end{equation}

For the odd dimensional case, $n=2m+1$, let $\{e_1,\ldots, e_{2m}\}$ be an orthonormal basis of $V$ and define
$V' =V\oplus\R e_{2m+1}$, with
$e_{2m+1}$ unit length and orthogonal to $V$.  We define $c':\Cl(V')\rightarrow\End(\mathbf{S}(V))$ as follows.
If $v\in V$ we let $c'(v):=c(v)\in\End(\mathbf{S}(V))$ as above, and we define $c'(e_{2m+1}):=-(-1)^{\frac{m+1}{2}}c(e_1 \cdots e_{2m})\in\End(\mathbf{S}(V)).$
Note that $\CCl(V')=\CCl(2m+1)$ has two irreducible complex representations, each of dimension $2^m$, and changing the sign of $c'(e_{2m+1})$ gives the other representation of $\CCl(V')$.

Alternatively, let $V=V_0 \oplus\R e_{2m}$ be an orthogonal sum.  Then
\begin{equation}\label{eq:odd-rep}
\Cl(V_0)\overset{\gamma}{\cong}\Cl_0 (V)\overset{c}{\longrightarrow}\End(\mathbb{S}^{\pm}(V)),
\end{equation}
where the isomorphism $\gamma:\Cl(V_0)\overset{\gamma}{\cong}\Cl_0(V)$ is given by $e_i \mapsto e_i \cdot e_{2m}$.  The choice of
half-spin representations $\mathbb{S}^{\pm}(V)$ gives the two representations of $\CCl(V_0)$ denoted by
$\mathbb{S}^{\pm}_{2m-1}$.  The restrictions of $\mathbb{S}^{\pm}_{2m-1}$ to $\CCl_0(V_0)$ are identical, thus
restricting to $\Spin(2m-1)\subset\CCl_0(V_0)$ gives the complex spin representation $\mathbb{S}_{2m-1}$, without a superscript.

Let $(M,g)$ be an oriented Riemannian manifold with a spin structure.  We have the principal bundle of orthonormal frames
$L_{\SO(n)}$ with the spin structure a $\Spin(n)$ principal bundle $L_{\Spin(n)}$ with 2-fold cover
$\theta:L_{\Spin(n)} \rightarrow L_{\SO(n)}$, restricting to the 2-fold cover $\Spin(n)\rightarrow\SO(n)$ on each fiber.
The \emph{spin bundle} is $\Sigma =L_{\Spin(n)} \times_{\Spin(n)} \mathbb{S}_{n}$.  If $n=2m$ then
$\Sigma =\Sigma^+ \oplus\Sigma^-$, where $\Sigma^{\pm} = L_{\Spin(n)} \times_{\Spin(n)} \mathbb{S}^{\pm}_{n}$.
When $n$ is odd there is a unique spinor bundle $\Sigma$, although there are two choices as a bundle of Clifford modules
over $\CCl(TM)$.

Since Killing spinors correspond to a holonomy reduction we will make use of the decomposition of some restrictions of the
spinor representation $\mathbb{S}_n$.  Let $\mu_m$ be the usual representation of $\SU(m)\subset\SO(2m)$ on $\C^m$.
Since $\SU(m)$ is simply connected, $\SU(m)\subset\SO(2m)$ lifts to an embedding $\SU(m)\subset\Spin(2m)$ under
$\theta:\Spin(2m)\rightarrow\SO(2m)$.  We have from our conventions
\begin{equation}\label{eq:spin-rep-su}
\begin{aligned}
\mathbb{S}^+_{2m} |_{\SU(m)} & = \Lambda^m \mu_m \oplus\Lambda^{m-2} \mu_m \oplus\cdots \\
\mathbb{S}^-_{2m} |_{\SU(m)} & = \Lambda^{m-1}\mu_m \oplus\Lambda^{m-3} \mu_m \oplus\cdots
\end{aligned}
\end{equation}

We will need to consider the spin representation restricted to
$\LSp(m)\oplus\LSp(1)\subset\SO(4m)$.  Let $\nu_{2m}$ be the complex representation of $\Sp(m)$ given by
$\Sp(m)\subset\SU(2m)$.  Contraction by the symplectic form gives $\Lambda^k \nu_{2m} =\Lambda_k \oplus\Lambda^{k-2}\nu_{2m}$,
for $2\geq k\geq m$, as $\Sp(m)$-representations where $\Lambda_k$ is the irreducible representation of $\Sp(m)$ with
highest weight $x_1 +\cdots +x_k$.  It is an elementary result (see~\cite[Prop. 4.14]{BroDie85})that an irreducible representation of $\Sp(m)\times\Sp(1)$ is of the form $V\hat{\otimes}W$ where $V$ and $W$ are irreducible representations of $\Sp(m)$ and
$\Sp(1)$ respectively.  A little more work shows that
\begin{equation}\label{eq:spin-rep-quat}
\begin{aligned}
\mathbb{S}^+_{4m} |_{\LSp(m)\oplus\LSp(1)} & = \Lambda_0 \hat{\otimes}\gamma_m \oplus\Lambda_2 \hat{\otimes}\gamma_{m-2}\oplus\cdots \\
\mathbb{S}^-_{4m} |_{\LSp(m)\oplus\LSp(1)} & = \Lambda_1 \hat{\otimes}\gamma_{m-1} \oplus\Lambda_3 \hat{\otimes}\gamma_{m-3}\oplus\cdots
\end{aligned}
\end{equation}
where $\gamma_k =S^k(\mu_2)$ is the irreducible representation of $\SU(2)=\Sp(1)$ of dimension $k+1$.
It follows from (\ref{eq:spin-rep-quat}) that for m even the inclusion
$\Sp(m)\cdot\Sp(1)=\Sp(m)\times\Sp(1)/\Z_2 \subset\SO(4m)$ lifts under $\theta:\Spin(4m)\rightarrow\SO(4m)$ to
$\Sp(m)\times\Sp(1)/\Z_2 \subset\Spin(4m)$.  While when m is odd
$\theta^{-1}(\Sp(m)\cdot\Sp(1)) =\Sp(m)\times\Sp(1)\subset\Spin(4m)$, which contains $(-I,-1)=-1\in\Spin(4m)$.

\subsection{Deformation of Einstein metrics and Killing spinors}

\subsubsection{Deformation of Einstein metrics}\label{subsec:Einst-def}
We describe what we will need from the theory of deformations of Einstein metrics and deformations of Killing spinors.
For more on the deformation theory of Einstein metrics see~\cite[ch. 12]{Bes87} or~\cite{Koi83}.  See~\cite{BouGau92} for the
apparatus for working with spinors under metric variations, and see~\cite{Wan91} for this applied to the Killing spinor equation.
In this article $M$ denotes a compact connected n-dimensional manifold.

\begin{defn}
Let $g$ be an Einstein metric on $M$.  A family $g_t$ of Einstein metrics on $M$ of fixed volume with $g_0 =g$ depending
smoothly on $t\in U\subset\R^k$ is an \emph{Einstein deformation} of $g$.
\end{defn}
Because Einstein metrics are critical points of the total scalar curvature functional $g\mapsto\int_M s_g\,\mu_g$ restricted to metrics of a fixed volume, a deformation of Einstein metrics has fixed scalar curvature $s =s_{g_t}$.  Thus
\begin{equation}\label{eq:Einst}
\Ric_{g_t} =\lambda g_t,
\end{equation}
where $\lambda =\frac{s}{n}$.  We will consider positive scalar curvature Einstein metrics, and it will be convenient for us to assume $\lambda =n-1$.

Let $\mathscr{M}_c$ be the space of Riemannian metrics on $M$ of fixed volume $c$.  This is acted upon by the diffeomorphism group
$\mathscr{D}$.  A local description of the quotient $\mathscr{M}_c /\mathscr{D}$ is given by D. Ebin's Slice Theorem~\cite{Ebi70}.
The tangent space to $\mathscr{M}_c$ at $g$ denoted by $T_g \mathscr{M}_c$ consists of
symmetric 2 tensors $h\in\Gamma\bigl(\Sym^2 T^*M \bigr)$ with $\int_M \tr h\, \mu_g =0$.  The tangent space to the orbit
$\mathscr{D}^*g$ consists of all Lie derivatives $\mathcal{L}_X g =2\delta^*_g X^{\flat}$, where $X^{\flat}$ is the 1-form dual to
a the vector field $X$ and
\begin{equation}
(\delta_g^*X^{\flat})_{ij}  =\frac{1}{2}\bigl(\nabla_i X^{\flat}_j +\nabla_j X^{\flat}_i \bigr),
\end{equation}
with $\nabla$ the Levi-Civita connection.  One can show that $\im\delta_g^* \subset T_g \mathscr{M}_c$ is closed, and
\begin{equation}\label{eq:slice-inf}
T_g \mathscr{M}_c =\im\delta_g^* \oplus\bigl(T_g \mathscr{M}_c \cap\ker\delta \bigr),
\end{equation}
where $(\delta_g h)_i =-\nabla^j h_{ji}$ is adjoint to $\delta_g^*$.

Let $h=\frac{dg_t}{dt}|_{t=0}$, then differentiating (\ref{eq:Einst}) gives
\begin{equation}\label{eq:Einst-lin}
2E'_g(h) = \bigl(\ol{\Delta} +2L-\delta_g^*\delta_g-\nabla d\tr_g\bigr)h =0,
\end{equation}
where $(Lh)_{ij} =R\indices{_i^k_j^l}h_{kl}$ and $\ol{\Delta}=\nabla^* \nabla$ is the rough Laplacian.

\begin{defn}
Let $(M,g)$ be an Einstein manifold.  A symmetric 2-tensor $h\in\Gamma\bigl(\Sym^2 T^*M \bigr)$ is an
\emph{infinitesimal Einstein deformation} of $g$ if $h$ satisfies (\ref{eq:Einst-lin}) and $\int_M \tr_g h\, \mu_g =0$.  The space of infinitesimal Einstein deformations is denoted by $\ED(g)$.
\end{defn}

An infinitesimal Einstein deformations of the form $\mathcal{L}_X g$ is said to be \emph{trivial}.  The space of trivial
infinitesimal Einstein deformations is denoted by $\TED(g)$.  An infinitesimal Einstein deformation $h$ is said to be
\emph{essential} if it is orthogonal to $\TED(g)$.  The space of essential infinitesimal Einstein deformations is
denoted by $\EED(g)$.  We can use the following lemma due to M. Berger and D.G. Ebin as the definition of $\EED(g)$.
\begin{lem}[\cite{BerEbe69}]
Let $(M,g)$ be an Einstein manifold.  An $h\in\Gamma\bigl(\Sym^2 T^*M \bigr)$ is an element of $\EED(g)$ if and only if $h$ satisfies
\begin{equation}
\bigl(\ol{\Delta} +2L \bigr) h=0,\quad \delta_g h =0,\quad \tr_g h=0.
\end{equation}
\end{lem}

We have the decomposition of closed spaces
\begin{equation}
\ED(g) =\EED(g)\oplus \TED(g),
\end{equation}
with $\EED(g)$ finite dimensional.

\begin{defn}
Let $(M,g)$ be an Einstein manifold.  The subset of Einstein metrics in the Ebin slice $\mathscr{S}_g$ (cf.~\cite{Ebi70})
at $g$ is called the \emph{local premoduli space of Einstein structures} and denoted by $\PM(g)$.
\end{defn}
The local moduli space is $\PM(g)/\Isom(g)$, but it will be more convenient to work with the local premoduli space.

\subsubsection{Deformation of spinors}\label{subsubsec:spin-def}

We will need the machinery due to J.P. Bourguignon and P. Gauduchon~\cite{BouGau92} for describing variations of spinor bundles and
spinors under metric variations and applied by M. Wang~\cite{Wan91} to study Killing spinor variations.

Let $P=L_{\SO(n)}$ be the bundle of oriented orthonormal frames on $(M,g)$.  A spin structure is a double cover $\tilde{P}$.
Given a symmetric, with respect to $g$, automorphism $\alpha:TM\rightarrow TM$ we have a new metric
\[g^\alpha (X,Y) =g(\alpha^{-1} X,\alpha^{-1}Y).\]
If $P^\alpha$ is the bundle of $g^\alpha$-orthonormal oriented frames, $\alpha :P\rightarrow P^\alpha$ is $\SO(n)$-equivariant,
and gives an isomorphism
\[ \Sigma =\tilde{P} \times_{\Spin(n)} \mathbb{S}_n \overset{\tilde{\alpha}}{\rightarrow} \Sigma^\alpha =\tilde{P}^\alpha \times_{\Spin(n)} \mathbb{S}_n . \]

Let $\alpha(t)$ be a smooth path of symmetric automorphisms with $\alpha(0)=\mathbb{1}_{TM}$, and
$\hat{\sigma}_t$ Killing spinors for $g^{\alpha(t)}$,
\begin{equation*}
\nabla^{\alpha(t)}_X \hat{\sigma}_t =cX\cdot_t \hat{\sigma}_t.
\end{equation*}

Set $\sigma_t =\tilde{\alpha}(t)^{-1}(\hat{\sigma}_t)$, then in terms of the original spin bundle
\begin{equation}\label{eq:Kill-spin-def1}
\ol{\nabla}^{\alpha(t)}_X \sigma_t =c\alpha(t)^{-1}(X)\cdot\sigma_t ,
\end{equation}
where $\ol{\nabla}^{\alpha(t)}_X =\tilde{\alpha}(t)^{-1}\circ\nabla^{\alpha(t)}_X \circ\tilde{\alpha}(t)$.

A \emph{deformation of the Killing spinor} $\sigma_0$ is a path $(\alpha(t),\sigma_t)$ satisfying
\begin{equation}\label{eq:Kill-spin-def-L}
\mathcal{L}^c(\alpha(t),\sigma_t)(X):=\ol{\nabla}^{\alpha(t)}_X \sigma_t -c\alpha(t)^{-1}(X)\cdot\sigma_t =0.
\end{equation}

We will make use of the twisted Dirac operator
\begin{equation}\label{eq:Dirac-twist}
\mathcal{D}: \Gamma(TM^*_{\C} \otimes\Sigma) \rightarrow \Gamma(TM^*_{\C} \otimes\Sigma).
\end{equation}
Decomposing into irreducible representations of $\Spin(n)$
\[ TM^*_{\C} \otimes\Sigma =\Sigma\oplus\Sigma_{\frac{3}{2}},\]
where $\Sigma_{\frac{3}{2}}$ is the bundle coming from the kernel of Clifford multiplication
$p: T\otimes\mathbb{S}_n \rightarrow\mathbb{S}_n$.
The component of $\mathcal{D}$ on $\Sigma_{\frac{3}{2}}$ is the \emph{Rarita-Schwinger operator}
\begin{equation}
\mathcal{Q}: \Gamma(\Sigma_{\frac{3}{2}}) \rightarrow \Gamma(\Sigma_{\frac{3}{2}}).
\end{equation}
If $\Psi\in\Gamma(\Sigma_{\frac{3}{2}})$ then $\mathcal{D}\Psi =\mathcal{Q}\Psi$ if and only if $\delta_g \Psi=0$.

We define tensors $\Psi^{(\beta,\sigma_0)}, \Theta^{(\beta,\sigma_0)}\in\Gamma(T^*M_{\C} \otimes\Sigma)$ for
$\beta:TM\rightarrow TM$ and $\sigma_0 \in\Gamma(\Sigma)$:
\begin{align}\label{eq:def-psi}
\Psi^{(\beta,\sigma_0)}(X) & =\beta(X)\cdot\sigma_0 \\\label{eq:def-theta}
\Theta^{(\beta,\sigma_0)}(X) & =\sum_i e_i(\nabla_{i} \beta)(X)\cdot\sigma_0,
\end{align}
where $X\in TM$ and $\{ e_i\}$ is a local orthonormal frame.  If $\beta$ is symmetric, $\tr_g \beta =0$,
and $\delta_g \beta =0$ then $\Psi^{(\beta,\sigma_0)}, \Theta^{(\beta,\sigma_0)}\in\Gamma(\Sigma_{\frac{3}{2}})$.
And if $\sigma_0$ is a Killing spinor, then $\delta_g \Psi^{(\beta,\sigma_0)}=\delta_g \Theta^{(\beta,\sigma_0)} =0$.

Differentiating (\ref{eq:Kill-spin-def-L}) at $(\mathbb{1}_{TM},\sigma_0)$:
\begin{prop}[\cite{Wan91}]\label{prop:Kill-spin-inf}
\[d\mathcal{L}^c(\dot{\alpha},\dot{\sigma})(X)=\nabla\dot{\sigma}_X -cX\dot{\sigma} +c\dot{\alpha}(X)\sigma_0 -\frac{1}{2}\sum_i e_i(\nabla_i\dot{\alpha})(X)\sigma_0 +\frac{1}{2}g(\delta\dot{\alpha},X)\sigma_0.\]
If $\tr_g(\dot{\alpha})=\delta\dot{\alpha}=0$, then $d\mathcal{L}^c(\dot{\alpha},\dot{\sigma})=0$ if and only if
$\nabla_X \dot{\sigma} =cX\dot{\sigma}$ and $\mathcal{D}\Psi^{(\dot{\alpha},\sigma_0)}=nc\Psi^{(\dot{\alpha},\sigma_0)}$.
\end{prop}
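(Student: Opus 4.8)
The plan is to differentiate the defining relation (\ref{eq:Kill-spin-def-L}) term by term at $(\mathbb{1}_{TM},\sigma_0)$ and then to analyze the resulting linear equation through the splitting $T^*M_{\C}\otimes\Sigma=\Sigma\oplus\Sigma_{\frac{3}{2}}$. First I would differentiate $\mathcal{L}^c(\alpha(t),\sigma_t)(X)=\ol{\nabla}^{\alpha(t)}_X\sigma_t-c\,\alpha(t)^{-1}(X)\cdot\sigma_t$. The algebraic term is straightforward: since $\alpha(0)=\mathbb{1}_{TM}$, differentiating $\alpha(t)\alpha(t)^{-1}=\mathbb{1}$ gives $\frac{d}{dt}\alpha(t)^{-1}\big|_{0}=-\dot\alpha$, so the variation of $-c\,\alpha(t)^{-1}(X)\cdot\sigma_t$ is $-cX\cdot\dot\sigma+c\,\dot\alpha(X)\cdot\sigma_0$. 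The connection term splits as $\frac{d}{dt}\ol{\nabla}^{\alpha(t)}_X\sigma_t\big|_0=\nabla_X\dot\sigma+\big(\tfrac{d}{dt}\ol{\nabla}^{\alpha(t)}_X\big|_0\big)\sigma_0$, using $\ol{\nabla}^{\alpha(0)}=\nabla$; the first summand is the $\nabla\dot\sigma_X$ of the statement, and the second is the variation of the induced spin connection applied to $\sigma_0$. Computing this last term is the technical heart: using the Bourguignon--Gauduchon description~\cite{BouGau92} of how the Levi--Civita, hence spin, connection varies when $g$ is deformed through $g^{\alpha(t)}(X,Y)=g(\alpha^{-1}X,\alpha^{-1}Y)$, one obtains exactly $-\tfrac12\sum_i e_i(\nabla_i\dot\alpha)(X)\cdot\sigma_0+\tfrac12 g(\delta\dot\alpha,X)\sigma_0$. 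Assembling the pieces gives the displayed formula. I expect the sign- and factor-bookkeeping in this Bourguignon--Gauduchon step to be the main obstacle, the rest being formal.

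For the second assertion I would impose $\tr_g\dot\alpha=\delta\dot\alpha=0$, which annihilates the last term and, recognizing (\ref{eq:def-psi}) and (\ref{eq:def-theta}), leaves
\[ d\mathcal{L}^c(\dot\alpha,\dot\sigma)(X)=\big(\nabla_X\dot\sigma-cX\cdot\dot\sigma\big)+c\,\Psi^{(\dot\alpha,\sigma_0)}(X)-\tfrac12\Theta^{(\dot\alpha,\sigma_0)}(X). \]
Since $\dot\alpha$ is symmetric, trace- and divergence-free, both $\Psi^{(\dot\alpha,\sigma_0)}$ and $\Theta^{(\dot\alpha,\sigma_0)}$ lie in $\Gamma(\Sigma_{\frac{3}{2}})$. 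The key computation here is $\mathcal{D}\Psi^{(\dot\alpha,\sigma_0)}=\Theta^{(\dot\alpha,\sigma_0)}+c(n-2)\Psi^{(\dot\alpha,\sigma_0)}$: differentiating $\Psi^{(\dot\alpha,\sigma_0)}(e_k)=\dot\alpha(e_k)\cdot\sigma_0$ in a normal frame and using the Killing equation $\nabla_i\sigma_0=ce_i\cdot\sigma_0$ produces $\Theta^{(\dot\alpha,\sigma_0)}$ from the $\nabla\dot\alpha$ term and, from the $\nabla\sigma_0$ term, the Clifford identity $\sum_i e_i\cdot v\cdot e_i=(n-2)v$ yields the $c(n-2)\Psi^{(\dot\alpha,\sigma_0)}$ term. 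Hence $\mathcal{D}\Psi^{(\dot\alpha,\sigma_0)}=nc\,\Psi^{(\dot\alpha,\sigma_0)}$ is equivalent to $\Theta^{(\dot\alpha,\sigma_0)}=2c\,\Psi^{(\dot\alpha,\sigma_0)}$, i.e.\ to $c\,\Psi^{(\dot\alpha,\sigma_0)}-\tfrac12\Theta^{(\dot\alpha,\sigma_0)}=0$.

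The equivalence then follows by separating the two summands in the display. Sufficiency is immediate: if $\dot\sigma$ is Killing the first bracket vanishes, and if $\mathcal{D}\Psi^{(\dot\alpha,\sigma_0)}=nc\,\Psi^{(\dot\alpha,\sigma_0)}$ the remaining two terms vanish by the previous paragraph. For necessity, I would apply the Clifford contraction $\Phi\mapsto\sum_i e_i\cdot\Phi(e_i)$ to the equation $d\mathcal{L}^c=0$; as $\Psi^{(\dot\alpha,\sigma_0)},\Theta^{(\dot\alpha,\sigma_0)}\in\Gamma(\Sigma_{\frac{3}{2}})=\ker p$ are annihilated, this leaves the $\Sigma$-component $\sum_i e_i\cdot\nabla_i\dot\sigma+nc\,\dot\sigma=0$, so $\dot\sigma$ is a Dirac eigenspinor of eigenvalue $-nc$. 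Because $(M,g)$ is compact with $\Ric_g=(n-1)g$, hence $s=n(n-1)$, and $c=\pm\tfrac12$, this eigenvalue realizes equality in Friedrich's inequality $\lambda^2\geq\frac{n}{4(n-1)}s=\frac{n^2}{4}$, so the limiting-case characterization~\cite{BFGK91} forces $\nabla_X\dot\sigma=cX\cdot\dot\sigma$. This is the conceptual crux of the necessity direction.

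With $\dot\sigma$ thus shown to be Killing, the first bracket in the display vanishes identically, so $d\mathcal{L}^c=0$ reduces to $c\,\Psi^{(\dot\alpha,\sigma_0)}-\tfrac12\Theta^{(\dot\alpha,\sigma_0)}=0$, which by the second paragraph is equivalent to $\mathcal{D}\Psi^{(\dot\alpha,\sigma_0)}=nc\,\Psi^{(\dot\alpha,\sigma_0)}$. This establishes both required conditions and completes the proof.
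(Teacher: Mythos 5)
The paper does not actually prove this proposition: it is quoted from \cite{Wan91}, with the variation formula itself resting on \cite{BouGau92}, so there is no in-paper argument to compare against. Judged on its own terms, your reconstruction is sound. The differentiation of the algebraic term, the reduction of the display under $\tr_g\dot{\alpha}=\delta\dot{\alpha}=0$ to
\[
\bigl(\nabla_X\dot{\sigma}-cX\cdot\dot{\sigma}\bigr)+\Bigl(c\,\Psi^{(\dot{\alpha},\sigma_0)}(X)-\tfrac{1}{2}\Theta^{(\dot{\alpha},\sigma_0)}(X)\Bigr),
\]
and the identity $\mathcal{D}\Psi^{(\dot{\alpha},\sigma_0)}=\Theta^{(\dot{\alpha},\sigma_0)}+c(n-2)\Psi^{(\dot{\alpha},\sigma_0)}$ (via $\sum_i e_i\cdot v\cdot e_i=(n-2)v$ and the Killing equation) all check out, so the eigenvalue condition is indeed equivalent to the vanishing of the second bracket. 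Your necessity argument is the right one and correctly locates where global input enters: the first bracket is not a priori a section of $\Sigma_{\frac{3}{2}}$, so the two brackets cannot be separated pointwise; applying the Clifford contraction $p$ kills $\Psi^{(\dot{\alpha},\sigma_0)}$ and $\Theta^{(\dot{\alpha},\sigma_0)}$ and yields $D\dot{\sigma}=-nc\,\dot{\sigma}$, and then compactness together with the limiting case of Friedrich's inequality (exactly saturated, since $s=4n(n-1)c^2$ makes the bound $n^2c^2=\lambda^2$, and the limiting constant $-\lambda/n$ equals $c$ with the correct sign) upgrades this to the Killing equation, after which the second bracket must vanish as well. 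The one step you do not carry out is the Bourguignon--Gauduchon computation producing $-\tfrac{1}{2}\sum_i e_i(\nabla_i\dot{\alpha})(X)\sigma_0+\tfrac{1}{2}g(\delta\dot{\alpha},X)\sigma_0$; since this is precisely where the coefficients $-\tfrac{1}{2}$ and $+\tfrac{1}{2}$ (and hence the eigenvalue $nc$ rather than some other multiple) originate, a self-contained proof would need to reproduce that variation formula rather than assert it, though deferring it to \cite{BouGau92} and \cite{Wan91} is consistent with how the paper itself treats the statement.
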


For $\beta:TM\rightarrow TM$ $g$-symmetric, define $h(X,Y)=-2g(\beta(X),Y)$.
\begin{prop}[\cite{Wan91}]
If $\tr_g \beta =\delta\beta=0$ and $\mathcal{D}\Psi^{(\beta,\sigma_0)}=cn\Psi^{(\beta,\sigma_0)}$, then
$\bigl(\ol{\Delta} +2L \bigr) h=0$ where $(Lh)_{ij} =R\indices{_i^k_j^l}h_{kl}$.
\end{prop}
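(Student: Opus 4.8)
The plan is to translate the spinorial eigenvalue equation $\mathcal{D}\Psi^{(\beta,\sigma_0)}=cn\Psi^{(\beta,\sigma_0)}$ into the tensorial equation $\bigl(\ol{\Delta}+2L\bigr)h=0$ by computing the twisted Dirac operator explicitly on $\Psi^{(\beta,\sigma_0)}$ and then on $\Theta^{(\beta,\sigma_0)}$, and finally invoking the pointwise injectivity of the map $\beta\mapsto\Psi^{(\beta,\sigma_0)}$. This last point is the clean part: since $\sigma_0$ is a Killing spinor it has constant, nowhere-vanishing length, and Clifford multiplication by a nonzero vector is invertible; hence $\beta(X)\cdot\sigma_0=0$ for all $X$ forces $\beta=0$. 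So it suffices to build a spinor-valued one-form out of $\bigl(\ol{\Delta}+2L\bigr)h$ and show that the hypotheses make it vanish.

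First I would compute $\mathcal{D}\Psi^{(\beta,\sigma_0)}$ directly. Working in a local orthonormal frame $\{e_i\}$ synchronous at a point and differentiating $\Psi^{(\beta,\sigma_0)}(X)=\beta(X)\cdot\sigma_0$, the spin connection hits $\sigma_0$ and one substitutes $\nabla_i\sigma_0=ce_i\cdot\sigma_0$. The derivative of $\beta$ reassembles into $\Theta^{(\beta,\sigma_0)}$, while the $\sigma_0$-term collapses through the Clifford identity $\sum_i e_i\cdot v\cdot e_i\cdot=(n-2)\,v\cdot$ for a vector $v$. This yields
\begin{equation*}
\mathcal{D}\Psi^{(\beta,\sigma_0)}=\Theta^{(\beta,\sigma_0)}+c(n-2)\,\Psi^{(\beta,\sigma_0)},
\end{equation*}
so the hypothesis $\mathcal{D}\Psi^{(\beta,\sigma_0)}=cn\Psi^{(\beta,\sigma_0)}$ is equivalent to the first-order relation $\Theta^{(\beta,\sigma_0)}=2c\,\Psi^{(\beta,\sigma_0)}$.

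To produce a second-order equation for $h$, I would apply $\mathcal{D}$ once more. On the one hand the hypothesis gives $\mathcal{D}^2\Psi^{(\beta,\sigma_0)}=c^2n^2\Psi^{(\beta,\sigma_0)}$, and feeding the identity above back in shows $\mathcal{D}\Theta^{(\beta,\sigma_0)}=2c^2 n\,\Psi^{(\beta,\sigma_0)}$. On the other hand I compute $\mathcal{D}\Theta^{(\beta,\sigma_0)}$ from the definition $\Theta^{(\beta,\sigma_0)}(X)=\sum_i e_i\cdot(\nabla_i\beta)(X)\cdot\sigma_0$. The two covariant derivatives of $\beta$ split into a symmetric part, which contracts against the symmetric part $-\delta_{li}$ of $e_l e_i$ to give the rough Laplacian $\ol{\Delta}\beta=\nabla^*\nabla\beta$, and an antisymmetric part $\tfrac12\sum_{l,i}e_l e_i\cdot[\nabla_l,\nabla_i]\beta$, which by the Ricci identity is the Riemann curvature acting on the endomorphism $\beta$; this is the origin of the operator $L$. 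The remaining terms, where the derivative instead lands on $\sigma_0$ via $\nabla_i\sigma_0=ce_i\cdot\sigma_0$, are first order in $\beta$ and, after Clifford reduction, become multiples of $\Theta^{(\beta,\sigma_0)}$ and $\Psi^{(\beta,\sigma_0)}$ (using $\tr_g\beta=0$ and $\delta\beta=0$); these are absorbed using $\Theta^{(\beta,\sigma_0)}=2c\,\Psi^{(\beta,\sigma_0)}$ from the previous step together with $\nabla^*\nabla\sigma_0=c^2 n\,\sigma_0$.

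Collecting the three contributions — the $\ol{\Delta}\beta$ term, the curvature term, and the absorbed lower-order terms — and matching scalar constants against $2c^2 n\,\Psi^{(\beta,\sigma_0)}$, the identity $\mathcal{D}\Theta^{(\beta,\sigma_0)}=2c^2 n\,\Psi^{(\beta,\sigma_0)}$ reorganizes into a vanishing statement of the form $\Psi^{(\,(\ol{\Delta}+2L)h,\,\sigma_0)}=0$, after rewriting $\beta$ in terms of $h$ via $h(X,Y)=-2g(\beta(X),Y)$; pointwise injectivity of $\beta\mapsto\Psi^{(\beta,\sigma_0)}$ then forces $\bigl(\ol{\Delta}+2L\bigr)h=0$. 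I expect the main obstacle to be the curvature bookkeeping: extracting precisely the operator $L$, with the correct factor $2$, out of the Clifford expression $\tfrac12\sum_{l,i}e_l e_i\cdot\bigl(R(e_l,e_i)\beta(X)-\beta(R(e_l,e_i)X)\bigr)\cdot\sigma_0$ requires the first Bianchi identity and the standard Clifford contraction identities for the Riemann tensor, together with a careful check that the spurious Ricci- and scalar-curvature terms cancel against the constants produced by $\nabla^*\nabla\sigma_0$ and $c=\pm\tfrac12$.
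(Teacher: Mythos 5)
The paper offers no proof of this proposition --- it is quoted from \cite{Wan91} --- so I am measuring your plan against the standard direct computation, which is indeed the route you take. Your first-order step is correct: with $e_i\cdot e_j+e_j\cdot e_i=-2\delta_{ij}$ one has $\sum_i e_i\cdot v\cdot e_i=(n-2)v$, hence $\mathcal{D}\Psi^{(\beta,\sigma_0)}=\Theta^{(\beta,\sigma_0)}+c(n-2)\Psi^{(\beta,\sigma_0)}$ and the hypothesis is equivalent to $\Theta^{(\beta,\sigma_0)}=2c\Psi^{(\beta,\sigma_0)}$ (consistent with the dichotomy in the following proposition of the paper). The endgame via pointwise injectivity of $v\mapsto v\cdot\sigma_0$ is also sound, since $|\sigma_0|$ is a nonzero constant and Clifford multiplication by a nonzero vector is invertible; and your treatment of the first-order remainder, which reduces to multiples of $\Theta^{(\beta,\sigma_0)}$ and $\Psi^{(\beta,\sigma_0)}$ using $\delta\beta=\tr_g\beta=0$, is right.

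The gap is in the curvature step, and it is not just a matter of care: the toolkit you announce (first Bianchi plus standard Clifford contractions of the Riemann tensor) does not suffice. In $\tfrac12\sum_{l,i}e_le_i\cdot\bigl(R(e_l,e_i)\beta(X)-\beta(R(e_l,e_i)X)\bigr)\cdot\sigma_0$, the first piece does reduce to a Ricci term because its Clifford-degree-three component is an antisymmetrization of $R$ over three slots, which first Bianchi kills. The second piece does not: its degree-three component is $-\tfrac12\sum_{l,i,k,m}R_{liXm}\beta_{mk}\,e_l\wedge e_i\wedge e_k$, whose coefficient is the antisymmetrization of $R_{liXm}\beta_{mk}$ over $(l,i,k)$; since $\beta$ carries the index $k$, no symmetry of $R$ annihilates this (only the constant-curvature part of $R$ drops out --- what survives is essentially the Weyl tensor contracted once with $\beta$, nonzero on every non-spherical example in the paper). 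As written, your final identity is therefore not of the form $\Psi^{(A,\sigma_0)}=0$ and the injectivity argument cannot be invoked. The missing ingredient is the integrability condition of the Killing spinor equation, $R^{\Sigma}(X,Y)\sigma_0=c^2(Y\cdot X-X\cdot Y)\cdot\sigma_0$, equivalently $\bigl(\sum_{i<j}R_{XYij}\,e_i\wedge e_j\bigr)\cdot\sigma_0=-4c^2\,(X\wedge Y)\cdot\sigma_0$: applying it to the offending three-form collapses it, modulo terms killed by the symmetry of $\beta$, to a vector acting on $\sigma_0$, namely a second $L$-type contraction together with a constant multiple of $\beta(X)$. This reduction is not optional bookkeeping --- it is what produces the coefficient $2$ in $2L$ and the zeroth-order cancellation against $s/4$ and $\nabla^*\nabla\sigma_0=c^2n\,\sigma_0$ --- so carrying out your plan without it would yield the wrong operator, not merely an unfinished computation.
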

So $h\in\Gamma\bigl(\Sym^2 T^*M \bigr)$ is an infinitesimal Einstein deformation.

\begin{defn}
An \emph{infinitesimal deformation of the Killing spinor} $\sigma_0$ is a pair $(\beta,\sigma)$, $\beta:TM\rightarrow TM$
symmetric and $\sigma\in\Gamma(\Sigma)$, satisfying:
\begin{thmlist}
\item  $\sigma$ is a Killing spinor with constant $c$,

\item  $\tr_g \beta =\delta\beta=0$,

\item  $\mathcal{D}\Psi^{(\beta,\sigma_0)}=nc\Psi^{(\beta,\sigma_0)}$.
\end{thmlist}
\end{defn}

The following result will have applications for the existence of eigenvectors of $\mathcal{Q}$.
\begin{prop}[\cite{Wan91}]
Let $(M,g)$ be spin with nonzero Killing spinor $\sigma_0$.  Let $h\in\EED(g)$, and define $\beta:TM\rightarrow TM$
by $h(X,Y)=-2g(\beta(X),Y)$.  Then we have an eigenvector of $\mathcal{Q}$ of either eigenvalue $cn$ or $c(2-n)$, that is
\begin{thmlist}
\item  $\mathcal{D}\Psi^{(\beta,\sigma_0)}=nc\Psi^{(\beta,\sigma_0)}$ and $\beta$ is an infinitesimal deformation of $\sigma_0$, or

\item  $\Theta^{(\beta,\sigma_0)} -2c\Psi^{(\beta,\sigma_0)}\neq 0$ and
\[ \mathcal{D}\bigl(\Theta^{(\beta,\sigma_0)} -2c\Psi^{(\beta,\sigma_0)} \bigr)=c(2-n)\bigl(\Theta^{(\beta,\sigma_0)} -2c\Psi^{(\beta,\sigma_0)} \bigr).  \]
\end{thmlist}
\end{prop}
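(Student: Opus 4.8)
The plan is to reduce everything to the (generically two--dimensional) subspace of $\Gamma(\Sigma_{\frac{3}{2}})$ spanned by $\Psi:=\Psi^{(\beta,\sigma_0)}$ and $\Theta:=\Theta^{(\beta,\sigma_0)}$, and to show that $\mathcal{D}$ acts there with characteristic polynomial $(\lambda-nc)(\lambda-(2-n)c)$. First I would record the hypotheses: since $h\in\EED(g)$, the endomorphism $\beta$ determined by $h(X,Y)=-2g(\beta(X),Y)$ is $g$--symmetric with $\tr_g\beta=0$ and $\delta\beta=0$, and it satisfies $(\ol{\Delta}+2L)\beta=0$ (transporting $(\ol{\Delta}+2L)h=0$ across the parallel identification of $\Sym^2 T^*M$ with symmetric endomorphisms). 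By the properties of $\Psi^{(\beta,\sigma_0)}$ and $\Theta^{(\beta,\sigma_0)}$ recalled in Section~\ref{subsubsec:spin-def}, both $\Psi$ and $\Theta$ then lie in $\Gamma(\Sigma_{\frac{3}{2}})$ and satisfy $\delta_g\Psi=\delta_g\Theta=0$, so that $\mathcal{D}\Psi=\mathcal{Q}\Psi$ and $\mathcal{D}\Theta=\mathcal{Q}\Theta$; in particular $\mathcal{D}$ preserves this subspace.

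The first computation is the identity $\mathcal{D}\Psi=\Theta+c(n-2)\Psi$, which uses only that $\sigma_0$ is a Killing spinor. In a local frame $\{e_i\}$ synchronous at a point, differentiate $\Psi(e_j)=\beta(e_j)\cdot\sigma_0$ and apply Clifford multiplication: the term in which the derivative falls on $\beta$ reproduces $\Theta(e_j)=\sum_i e_i\cdot(\nabla_i\beta)(e_j)\cdot\sigma_0$, while the term in which it falls on $\sigma_0$ becomes $c\sum_i e_i\cdot\beta(e_j)\cdot e_i\cdot\sigma_0$ by the Killing equation $\nabla_i\sigma_0=c\,e_i\cdot\sigma_0$. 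The Clifford identity $\sum_i e_i\cdot Y\cdot e_i=(n-2)Y$ for a vector $Y$ (valid in the paper's convention $X\cdot X=-|X|^2$) collapses the second term to $c(n-2)\Psi(e_j)$, giving the claim.

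The heart of the argument is the quadratic relation
\[ \mathcal{D}^2\Psi=2c\,\mathcal{D}\Psi+n(n-2)c^2\,\Psi, \]
equivalently $(\mathcal{D}-nc)(\mathcal{D}-(2-n)c)\Psi=0$. Here I would invoke the Bourguignon--Gauduchon/Wang apparatus: a Lichnerowicz--Weitzenb\"{o}ck formula for the twisted Dirac operator $\mathcal{D}$ on $T^*M_{\C}\otimes\Sigma$ expresses $\mathcal{D}^2\Psi^{(\beta,\sigma_0)}$ through the rough Laplacian $\ol{\Delta}$ and the curvature endomorphism of the twisting bundle $T^*M$, and under the correspondence $\beta\leftrightarrow\Psi^{(\beta,\sigma_0)}$ the principal and curvature parts assemble precisely into the Einstein linearization $\Psi^{(\frac{1}{2}(\ol{\Delta}+2L)\beta,\,\sigma_0)}$, the remaining zeroth--order terms being fixed by the Killing equation and the Einstein condition $\Ric_g=(n-1)g$. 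This is the same operator identity underlying the preceding Proposition, read in the opposite direction. Imposing $(\ol{\Delta}+2L)\beta=0$ annihilates the Einstein--linearization term and leaves exactly the displayed relation. Carrying out this Weitzenb\"{o}ck computation and bookkeeping the constants is, I expect, the main obstacle; everything else is formal.

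Finally I would extract the dichotomy. Set $\chi:=(\mathcal{D}-nc)\Psi$, which by the first identity equals $\Theta-2c\Psi$. The quadratic relation together with the commutativity of the scalar shifts gives $(\mathcal{D}-(2-n)c)\chi=(\mathcal{D}-(2-n)c)(\mathcal{D}-nc)\Psi=0$, i.e. $\mathcal{D}\chi=c(2-n)\chi$. If $\chi=\Theta-2c\Psi\neq 0$ this is precisely conclusion~(ii). If $\chi=0$ then $\Theta=2c\Psi$, whence $\mathcal{D}\Psi=\Theta+c(n-2)\Psi=cn\Psi$; combined with $\tr_g\beta=\delta\beta=0$ this says that $\beta$ is an infinitesimal deformation of $\sigma_0$, which is conclusion~(i).
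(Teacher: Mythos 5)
The paper does not actually prove this proposition --- it is quoted from \cite{Wan91} without argument --- so there is no in-house proof to compare against; I can only judge your proposal on its own terms, and its skeleton does coincide with Wang's original argument. The parts you carry out are correct: the identity $\mathcal{D}\Psi^{(\beta,\sigma_0)}=\Theta^{(\beta,\sigma_0)}+c(n-2)\Psi^{(\beta,\sigma_0)}$ follows exactly as you say from the Leibniz rule, the Killing equation and $\sum_i e_i\cdot Y\cdot e_i=(n-2)Y$; the algebra $(\mathcal{D}-nc)\Psi^{(\beta,\sigma_0)}=\Theta^{(\beta,\sigma_0)}-2c\Psi^{(\beta,\sigma_0)}$ is right; and given the quadratic relation $(\mathcal{D}-(2-n)c)(\mathcal{D}-nc)\Psi^{(\beta,\sigma_0)}=0$ your extraction of the dichotomy is clean and complete, including the degenerate case $\chi=0$.

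The gap is that the quadratic relation \emph{is} the proposition: given your first identity, the two statements are logically equivalent, so ``invoking the Bourguignon--Gauduchon/Wang apparatus'' at that point establishes nothing unless the Weitzenb\"ock identity is actually exhibited --- and neither the paper (which also states the preceding converse Proposition without proof) nor your write-up does so. Concretely, what must be computed is $\mathcal{D}\Theta^{(\beta,\sigma_0)}$. Differentiating once more in a synchronous frame and separating the diagonal from the commutator of second covariant derivatives gives
\[ \bigl(\mathcal{D}\Theta^{(\beta,\sigma_0)}\bigr)(X)=(\ol{\Delta}\beta)(X)\cdot\sigma_0+(4-n)c\,\Theta^{(\beta,\sigma_0)}(X)+\mathcal{R}(X), \]
where the middle term uses $\delta_g h=0$ (so that $\sum_i e_i\cdot(\nabla_i\beta)(X)$ is a pure $2$-form, on which $\sum_j e_j\cdot\omega\cdot e_j=-(n-4)\omega$) and $\mathcal{R}$ collects the curvature terms $\tfrac{1}{2}\sum_{i,j}e_je_i\bigl(R(e_j,e_i)\cdot\beta\bigr)(X)\cdot\sigma_0$. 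One must then verify, using the first Bianchi identity, the symmetry of $\beta$ and $\Ric=4(n-1)c^2 g$, that $\mathcal{R}(X)=\bigl(2L\beta\bigr)(X)\cdot\sigma_0+4(n-2)c^2\beta(X)\cdot\sigma_0$, so that $(\ol{\Delta}+2L)h=0$ leaves precisely $\mathcal{D}\Theta^{(\beta,\sigma_0)}=(4-n)c\,\Theta^{(\beta,\sigma_0)}+4(n-2)c^2\Psi^{(\beta,\sigma_0)}$, which is exactly what your quadratic relation requires. This curvature bookkeeping is the only place the hypothesis $h\in\EED(g)$ enters, it is where all the content of the proposition lives, and it is the one step you have deferred rather than done.
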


Let $(M,g)$ be Einstein, then the Einstein premoduli space $\PM(g)\subseteq Z$, where $Z$ is a finite dimensional
real analytic submanifold of the slice $\mathscr{S}_g$~\cite{Koi80}.
The bundles $\Sigma_{g'}$ and equation (\ref{eq:K-S-int}) depend real analytically on $g' \in Z$.  Define
$\mathcal{N}_{g'}^{+}$ (resp. $\mathcal{N}_{g'}^{-}$) to be space of solutions of (\ref{eq:K-S-int}) for $g' \in Z$
and $c=\frac{1}{2}$ (resp. $c=-\frac{1}{2}$).  Since (\ref{eq:K-S-int}) has injective symbol $\dim_\C \mathcal{N}_{g'}^{\pm}$
is upper semi-continuous.  See for example~\cite[Lemma 4.3]{Koi83}.  We will see by example that it is not locally constant
as in the case of parallel spinors.

\subsection{Sasakian manifolds}

\subsubsection{Sasakian structures}
The Killing spinor deformations we consider are of the non-exeptional cases of Sasakian and 3-Sasakian manifolds in Table~\ref{tab:Kill-spin}.  See~\cite{BoyGal99} or the monograph~\cite{BoyGal08} for more on Sasakian geometry.

\begin{defn}\label{defn:Sasaki}
A Riemannian manifold $(M,g)$ is Sasakian if the metric cone $(C(M),\ol{g})$, $C(M):=\R_+ \times M$ and $\ol{g}=dr^2 +r^2 g$, is K\"{a}hler, that is $\ol{g}$ admits a compatible almost complex structure $J$ so that $(C(M),\ol{g}, J)$ is a K\"{a}hler structure.  Equivalently, $\Hol(C(M),\ol{g})\subseteq\U(m)$, where $\dim M =n=2m-1$.
\end{defn}

It is convenient to identify $M$ with $\{r=1\}= \{1\}\times M \subset C(M)$.
A Sasaki structure is a special type of metric contact structure.
Traditionally the Sasakian structure on $M$ was defined as a metric contact structure $(g,\eta,\xi,\Phi)$ satisfying an additional condition called \emph{normality}, which is an integrability condition, where $\eta$ is a contact form with Reeb vector field $\xi$ and $\Phi$ is a $(1,1)$ tensor.  Here $\xi$ and $\eta$ are restrictions to $M$ of
\begin{equation}
\xi =J r\partial r,\quad \eta(X)=\frac{1}{r^2}\xi\contr\ol{g},
\end{equation}
on $C(M)$, which are given the same notation.  It follows from the latter formula that
\begin{equation}\label{eq:cont}
\eta=d^c \log r,
\end{equation}
where $d^c =\sqrt{-1}(\ol{\partial}-\partial)$.  One can show from the warped product structure of $(C(M),\ol{g})$ that
$\xi$ is Killing and real holomorphic.  If $\omega$ is the K\"{a}hler form of $\ol{g}$, then
\[ \omega =\frac{1}{2}d(r^2 \eta)=\frac{1}{4}dd^c r^2.\]
we also have
\begin{equation}\label{eq:Kah-con}
\omega=\frac{1}{2}d(r^2\eta) =rdr\wedge\eta +\frac{1}{2}r^2 d\eta.
\end{equation}

Let $D\subset TM$ be the contact distribution which is defined by
\begin{equation}
D_x =\ker\eta_x
\end{equation}
for $x\in M$.  There is a splitting of the tangent bundle $TM$
\begin{equation}
TM =D\oplus L_{\xi},
\end{equation}
where $L_{\xi}$ is the trivial subbundle generated by $\xi$.  The tensor $\Phi\in\End(TM)$ is defined by
$\Phi|_D =J$ and $\Phi(\xi) =0$.  Since $\xi$ is Killing one can show that $\Phi =\nabla\xi$.
We denote the Sasakian structure by $(g,\eta,\xi,\Phi)$.

The vector field $\xi +\sqrt{-1}r\partial_r$ is holomorphic on $C(M)$, thus it defines a holomorphic action of $\tilde{\C}^*$,
the universal cover of $\C^*$.  The intersection of each orbit with $M\subset C(M)$ is an orbit of the action of $\xi$ on
$M$.  Thus the orbits define a transversely holomorphic foliation $\mathscr{F}_\xi$ on $M$ called the \emph{Reeb foliation}.
If $\xi$ generates a free $\U(1)$-action, then the Sasakian structure is \emph{regular}.  The Sasakian structure is
\emph{quasi-regular} if it generates a locally free $\U(1)$-action, and \emph{irregular} if not all the orbits are compact.

The foliation $\mathscr{F}_{\xi}$ together with its transverse holomorphic structure is given by an open covering
$\{U_\alpha \}_{\alpha\in A}$ and submersions $\pi_\alpha :U_\alpha \rightarrow W_\alpha \subset\C^{m-1}$ such that
when $U_\alpha \cap U_\beta \neq\emptyset$ the map
\[\phi_{\beta\alpha} =\pi_\beta \circ\pi_\alpha^{-1} :\pi_{\alpha}(U_\alpha \cap U_\beta) \rightarrow\pi_{\beta}(U_\alpha \cap U_\beta) \]
is a biholomorphism.

Note that on $U_\alpha$ the differential $d\pi_\alpha :D_x \rightarrow T_{\pi_\alpha(x)}W_\alpha$ at $x\in U_\alpha$ is
an isomorphism taking the almost complex structure $J_x$ to that on $T_{\pi_\alpha(x)}W_\alpha$.
Since $\xi\contr d\eta =0$ the 2-form $\frac{1}{2}d\eta$ descends to a form $\omega_\alpha^T$ on $W_\alpha$.  Similarly,
$g^T =\frac{1}{2}d\eta(\cdot,\Phi\cdot)$ satisfies $\mathcal{L}_\xi g^T =0$ and vanishes on vectors tangent to the leaves, so
it descends to an Hermitian metric $g^T_\alpha$ on $W_\alpha$ with K\"{a}hler form $\omega_\alpha^T$.  The K\"{a}hler metrics
$\{g_\alpha ^T \}$ and K\"{a}hler forms $\{\omega_\alpha^T \}$ on $\{ W_\alpha\}$ by construction are isomorphic on the overlaps
\[ \phi_{\beta\alpha} : \pi_{\alpha}(U_\alpha \cap U_\beta) \rightarrow\pi_{\beta}(U_\alpha \cap U_\beta).\]
We will use $g^T$, respectively $\omega^T$, to denote both the K\"{a}hler metric, respectively K\"{a}hler form, on the the
local charts and the globally defined pull-back on $M$.

If we define $\nu(\mathscr{F}_\xi) =TM/{L_\xi}$ to be the normal bundle to the leaves, then we can generalize the above concept.
\begin{defn}
A tensor $\Psi\in\Gamma\bigl((\nu(\mathscr{F}_\xi)^*)^{\otimes p} \bigotimes\nu(\mathscr{F}_\xi)^{\otimes q}\bigr)$ is \emph{basic}
if $\mathcal{L}_V \Psi =0$ for any vector field $V\in\Gamma(L_\xi)$.
\end{defn}
  Note that it is sufficient to check the above property for $V=\xi$.
Then $g^T$ and $\omega^T$ are such tensors on $\nu(\mathscr{F}_\xi)$.  We will also make use of the bundle isomorphism
$\pi:D \rightarrow\nu(\mathscr{F}_\xi)$, which induces an almost complex structure $\ol{J}$ on $\nu(\mathscr{F}_\xi)$ so that
$(D,J)\cong(\nu(\mathscr{F}_\xi),\ol{J})$ as complex vector bundles.  Clearly, $\ol{J}$ is basic and is mapped to the
natural almost complex structure on $W_\alpha$ by the local chart $d\pi_\alpha :D_x \rightarrow T_{\pi_\alpha(x)}W_\alpha$.

To work on the K\"{a}hler leaf space we define the Levi-Civita connection of $g^T$ by
\begin{equation}\label{eq:LC-trans}
\nabla^T_X Y =\begin{cases}
\pi_\xi(\nabla_X Y) & \text{ if }X, Y\text{ are smooth sections of }D, \\
\pi_\xi([V,Y]) & \text{ if } X=V\text{ is a smooth section of }L_\xi,
\end{cases}
\end{equation}
where $\pi_\xi :TM \rightarrow D$ is the orthogonal projection onto $D$.  Then $\nabla^T$ is the unique torsion free connection
on $D\cong\nu(\mathscr{F}_\xi)$ so that $\nabla^T g^T=0$.  Then for $X,Y\in\Gamma(TM)$ and $Z\in\Gamma(D)$ we have the
curvature of the transverse K\"{a}hler structure
\begin{equation}\label{eq:trans-curv}
R^T(X,Y)Z =\nabla^T_X \nabla^T_Y Z -\nabla^T_Y \nabla^T_X Z -\nabla^T_{[X,Y]} Z,
\end{equation}
and similarly we have the transverse Ricci curvature $\Ric^T$ and scalar curvature $s^T$.  We will denote the
transverse Ricci form by $\rho^T$.
From O'Neill's tensors computation for Riemannian submersions \cite{ONe66} and elementary properties of Sasakian structures
we have the following.
\begin{prop}\label{prop:Sasaki-Ric}
Let $(M,g,\eta,\xi,\Phi)$ be a Sasakian manifold of dimension $n=2m-1$, then
\begin{thmlist}
\item  $\Ric_g (X,\xi) =(2m-2)\eta(X),\quad\text{for }X\in\Gamma(TM)$,\label{eq:submer-Ric-Reeb}
\item  $\Ric^T (X,Y) =\Ric_g (X,Y) +2g^T(X,Y),\quad\text{for }X,Y\in\Gamma(D)$.\label{eq:submer-Ric}
\end{thmlist}
\end{prop}

In particular, if $(M,g,\eta,\xi,\Phi)$ is \emph{Sasaki-Einstein}, then by~\ref{prop:Sasaki-Ric}~\ref{eq:submer-Ric-Reeb}
it has Einstein constant $n-1$, that is
\begin{equation}\label{eq:S-E}
\Ric_g =(n-1)g.
\end{equation}
Note that (\ref{eq:S-E}) is equivalent to $(C(M),\ol{g})$ being Ricci-flat, since
\[ \Ric_{\ol{g}} =\Ric_g -(n-1)g.\]

\subsubsection{3-Sasakian structures}

Recall that a hyperk\"{a}hler structure on a $4m$-dimensional manifold consists of a metric $g$ which is K\"{a}hler with respect to three complex structures $J_1, J_2, J_3$ satisfying the quaternionic relations $J_1 J_2 =-J_2 J_1 =J_3$ etc.
\begin{defn}\label{eq:3Sasaki}
A Riemannian manifold $(M,g)$ is 3-Sasakian if the metric cone $(C(M),\ol{g})$ is hyperk\"{a}hler, that is $\ol{g}$ admits compatible almost complex structures
$J_\alpha,\ \alpha=1,2,3 $ such that $(C(M),\ol{g},J_1,J_2,J_3)$ is a hyperk\"{a}hler structure.  Equivalently, $\Hol(C(M))\subseteq\Sp(m)$.
\end{defn}
A consequence of the definition is that $(M,g)$ is equipped with three Sasakian structures
$(g,\eta_i,\xi_i,\Phi_i),\ i=1,2,3$.
The Reeb vector fields $\xi_i =J_i(r\partial_r),\ i=1,2,3$ are orthogonal and satisfy $[\xi_i,\xi_j]=-2\varepsilon^{ijk}\xi_k$, where $\varepsilon^{ijk}$ is anti-symmetric in the indicies $i,j,k \in\{1,2,3\}$ and $\varepsilon^{123}=1$.
The tensors $\Phi_i ,\ i=1,2,3$ satisfy the identities
\begin{align}
\Phi_i(\xi_j) & =\varepsilon^{ijk}\xi_k  \label{eq:3-Sasak-id1} \\
\Phi_i \circ\Phi_j & =-\delta_{ij}\mathbb{1} +\varepsilon^{ijk}\Phi_k +\eta_j \otimes\xi_i \label{eq:3-Sasak-id2}
\end{align}
It is easy to see that there is an $\SR$ of Sasakian structures with Reeb vector field
$\xi_\tau =\tau_1 \xi_1 +\tau_2 \xi_2 +\tau_3 \xi_3$ with $\tau\in\SR$.

The Reeb vector fields $\{\xi_1,\xi_2,\xi_3 \}$ generate a Lie algebra $\mathfrak{sp}(1)$, so there is an effective isometric action
of either $\SO(3)$ or $\Sp(1)$ on $(M,g)$.  Both cases occur in the examples in this article.
This action generates a foliation $\mathscr{F}_{\xi_1,\xi_2,\xi_3}$ with generic leaves either $\SO(3)$ or $\Sp(1)$.

If we set $D_i =\ker\eta_i \subset TM,\ i=1,2,3$ to be the contact subbundles, then the complex structures
$J_i,\ i=1,2,3$ are recovered by
\begin{equation}
J_i (r\partial_r) =\xi_i,\quad J_i|_{D_i} =\Phi_i.
\end{equation}

Because a hyperk\"{a}her manifold is always Ricci-flat we have the following.
\begin{prop}
A 3-Sasakian manifold $(M,g)$ of dimension $4m-1$ is Einstein with Einstein constant $\lambda=4m-2$.
\end{prop}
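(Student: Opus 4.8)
The plan is to read the result off directly from the hyperk\"{a}hler geometry of the metric cone, using the cone Ricci formula recorded in Section~\ref{subsec:kill-spin}. By Definition~\ref{eq:3Sasaki}, a 3-Sasakian manifold is precisely one whose cone $(C(M),\ol{g})$ is hyperk\"{a}hler, so $\Hol(C(M),\ol{g})\subseteq\Sp(m)$ where $\dim C(M)=4m$. A hyperk\"{a}hler metric is Ricci-flat, since its reduced holonomy sits inside $\Sp(m)\subset\SU(2m)\subset\SO(4m)$ and thus preserves a parallel holomorphic volume form; hence $\Ric_{\ol{g}}=0$.

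The second and decisive ingredient is the cone Ricci identity $\Ric_{\ol{g}}=\Ric_g-(n-1)g$, with $n=\dim M$, obtained from the warped-product curvature of $\ol{g}=dr^2+r^2g$ and already noted in Section~\ref{subsec:kill-spin}. Combining this with $\Ric_{\ol{g}}=0$ forces $\Ric_g=(n-1)g$. Substituting $n=4m-1$ yields $\Ric_g=(4m-2)g$, so $(M,g)$ is Einstein with Einstein constant $\lambda=4m-2$, as asserted.

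There is essentially no obstacle in this route: once Ricci-flatness of hyperk\"{a}hler metrics and the cone Ricci formula are granted, the conclusion is immediate, and both are standard facts invoked earlier in the excerpt. For completeness I would note an intrinsic alternative that avoids the cone entirely: apply Proposition~\ref{prop:Sasaki-Ric} to each of the three Sasakian structures $(g,\eta_i,\xi_i,\Phi_i)$ and use the identities (\ref{eq:3-Sasak-id1}) and (\ref{eq:3-Sasak-id2}) to evaluate $\Ric_g$ on the frame spanned by $\xi_1,\xi_2,\xi_3$ and their common orthogonal complement $\bigcap_i D_i$. Part~(\ref{eq:submer-Ric-Reeb}) already gives $\Ric_g(X,\xi_i)=(4m-2)\eta_i(X)$, so the three Reeb directions carry the correct eigenvalue; one then checks, using the three transverse K\"{a}hler structures simultaneously, that the complement is also an eigenspace with the same eigenvalue, whereupon the three structures overdetermine $\Ric_g$ and pin it to $(4m-2)g$. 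This computation is more laborious and is unnecessary given the hyperk\"{a}hler characterization, so I would present only the cone argument.
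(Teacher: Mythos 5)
Your cone argument is exactly the paper's proof: the paper justifies the proposition with the single observation that a hyperk\"ahler metric is Ricci-flat, combined with the cone Ricci identity $\Ric_{\ol{g}}=\Ric_g-(n-1)g$ already recorded in Section~\ref{subsec:kill-spin}, giving $\Ric_g=(4m-2)g$. The intrinsic alternative you sketch is not used by the paper and, as you say, is unnecessary here.
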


We choose a Reeb vector field $\xi_1$, fixing a quasi-regular Sasakian structure, then the leaf space $\mathscr{F}_{\xi_1}$ is
a K\"{a}hler orbifold $Z$ with respect to the transversal complex structure $\ol{J}=\Phi_1$.
But it has in addition a complex contact structure and a fibering by rational curves which we now describe.
The 1-form $\eta^c =\eta_2 +\sqrt{-1}\eta_3$ is a $(1,0)$-form with respect to $\ol{J}$. But it is not invariant under
the $\U(1)$ group generated by $\exp(t\xi_1)$.  We have $\exp(t\xi_1)^* \eta^c =e^{2\sqrt{-1}\,t}\eta^c$.
Let $\mathbf{L}= M\times_{\U(1)}\C$, with $\U(1)$ acting on $\C$ by $e^{-2\sqrt{-1}\,t}$.
This is a holomorphic orbifold line bundle; in fact $C(M)$ is either $\mathbf{L}^{-1}$ or $\mathbf{L}^{-\frac{1}{2}}$
minus the zero section.  It is easy to see that each of these cases occur precisely where the Reeb vector fields generate
an effective action of $\SO(3)$ and $\Sp(1)$ respectively.  Then $\eta^c$ descends to an $\mathbf{L}$ valued holomorphic
1-form $\theta\in\Gamma\bigl(\Omega^{1,0}(\mathbf{L})\bigr)$.  It follows easily from (\ref{eq:3-Sasak-id2})
that $d\eta^c$ restricted to $D_1 \cap\ker\eta^c$ is a non-degenerate type $(2,0)$ form.  Thus $\theta$ is a complex contact
form on $Z$, and $\theta\wedge(d\theta)^{m-1} \in\Gamma\bigl(\mathbf{K}_{Z}\otimes\mathbf{L}^{m}\bigr)$ is
a non-vanishing section.  Thus $\mathbf{L}\cong\mathbf{K}_{Z}^{-\frac{1}{m}}$.

Each leaf of $\mathscr{F}_{\xi_1,\xi_2,\xi_3}$ descends to a rational curve in $Z$.  Each curve is a $\cps^1$
but may have orbifold singularities for non-generic leaves.  It is also well-known that restricted to a leaf
$\mathbf{L}|_{\cps^1} =\mathcal{O}(2)$, the degree 2 line bundle on a generic smooth leaf, while $\mathcal{O}(2)$ is interpreted as
an orbifold line bundle when the leaf has orbifold singularities.
The element $\exp(\frac{\pi}{2}\xi_2)$ acts on $M$ taking $\xi_1$ to $-\xi_1$, thus it descends to an anti-holomorphic
involution $\varsigma: Z\rightarrow Z$.  This \emph{real structure} is crucial to the twistor approach.
Note that $\varsigma^*\theta =\ol{\theta}$.
This all depends on the choice $\xi_1 \in\SR$ of the Reeb vector field.  But taking a different Reeb vector field gives an
isomorphic twistor space under the transitive action of $\Sp(1)$.

\section{Killing spinor deformations on Sasaki-Einstein manifolds}\label{sec:S-E-KS-def}

\subsection{Deformations of transversal complex structures}\label{subsec:def-versdef}

Let $(M,g,\eta,\xi,\Phi)$ be a Sasakian manifold.  Then the Reeb foliation $(\mathscr{F}_\xi,\ol{J})$ has
a transversely holomorphic structure.  The existence of a versal deformation space for $(\mathscr{F}_\xi,\ol{J})$, fixing
the smooth structure of $\mathscr{F}$, was proved in~\cite{ElKacNic89} and~\cite{Gir92} using arguments similar to those
in~\cite{Kur65}.

Let $\mathcal{A}^k =\Gamma(\Lambda^{0,k}_b\otimes\nu(\mathscr{F})^{1,0})$ be the space of smooth basic forms of
type $(0,k)$ with values in $\nu(\mathscr{F})^{1,0}$.  We have the Dolbeault complex
\begin{equation}\label{eq:Dol-comp}
 0\rightarrow \mathcal{A}^{0} \overset{\ol{\partial}_b}{\longrightarrow}\mathcal{A}^{1} \overset{\ol{\partial}_b}{\longrightarrow}\mathcal{A}^{2}\rightarrow\cdots.
\end{equation}
Here (\ref{eq:Dol-comp}) is the basic version of the complex used by Kuranishi~\cite{Kur65} whose
degree one cohomology is the space of first order deformation of the complex structure modulo diffeomorphisms.
Likewise, the first order deformations of $(\mathscr{F}_\xi,\ol{J})$ modulo foliate diffeomorphisms are given by
$H^1(\mathcal{A}^\bullet)$.  As in~\cite{Kur65} there is an open set $\mathcal{U}\subset H^1(\mathcal{A}^\bullet)$ and
the versal deformation space
$\mathcal{V}\subset\mathcal{U}$ is the germ of $\theta^{-1}(0)$ where $\theta$ is an analytic map
\[  H^1(\mathcal{A}^{\bullet}) \overset{\theta}{\rightarrow} H^2(\mathcal{A}^{\bullet}).\]

\begin{prop}\label{prop:vers-def}
Suppose $(M,g,\eta,\xi,\Phi)$ is Sasaki-Einstein (just $\Ric^T >0$ is sufficient).
We have $H^2(\mathcal{A}^\bullet)=\{0\}$, so the versal deformation space is smooth, $\mathcal{U}\subset H^1(\mathcal{A}^{\bullet})$.
\end{prop}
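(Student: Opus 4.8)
The plan is to identify $H^2(\mathcal{A}^\bullet)$ with a basic Dolbeault cohomology group of a \emph{positive} transverse line bundle, and then to invoke a transverse Kodaira--Nakano type vanishing theorem, the required positivity being supplied precisely by $\Ric^T>0$. The first thing to observe is that the Reeb foliation $\mathscr{F}=\mathscr{F}_\xi$ is a Riemannian foliation which is transversely K\"{a}hler: the transverse metric $g^T$, the transverse K\"{a}hler form $\omega^T$, and the transverse complex structure $\ol{J}$ are all basic and compatible. In this situation the basic Dolbeault complex behaves formally like the Dolbeault complex of a compact K\"{a}hler manifold of complex dimension $m-1$. Concretely I would use the transverse Hodge theory for transversely K\"{a}hler Riemannian foliations (El Kacimi-Alaoui): the groups $H^q(\mathcal{A}^\bullet)$ are finite dimensional and represented by basic harmonic forms, and the whole package of K\"{a}hler identities, Lefschetz operators, and duality carries over to basic forms valued in a transverse Hermitian holomorphic vector bundle. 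Since by construction $\mathcal{A}^k=\Gamma\bigl(\Lambda^{0,k}_b\otimes\nu(\mathscr{F})^{1,0}\bigr)$ with differential $\ol{\partial}_b$, we have by definition $H^q(\mathcal{A}^\bullet)=H^{0,q}_b\bigl(\nu(\mathscr{F})^{1,0}\bigr)$.

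The algebraic heart of the argument is the transverse bundle isomorphism
\[
\nu(\mathscr{F})^{1,0}\cong\Lambda^{m-2}\bigl(\nu(\mathscr{F})^{*1,0}\bigr)\otimes(\mathbf{K}^T)^{-1},
\]
where $\mathbf{K}^T=\Lambda^{m-1}\nu(\mathscr{F})^{*1,0}$ is the transverse canonical bundle and the transverse complex dimension is $m-1$ (coming from the perfect pairing $\Lambda^{m-2}\nu^{*1,0}\otimes\nu^{*1,0}\to\mathbf{K}^T$). Tensoring with $\Lambda^{0,q}_b$ converts this into $\Lambda^{0,q}_b\otimes\nu(\mathscr{F})^{1,0}\cong\Lambda^{m-2,q}_b\otimes(\mathbf{K}^T)^{-1}$, so that $H^{0,2}_b\bigl(\nu(\mathscr{F})^{1,0}\bigr)\cong H^{m-2,2}_b\bigl((\mathbf{K}^T)^{-1}\bigr)$, the basic Dolbeault cohomology of $(m-2,2)$-forms valued in the transverse anticanonical bundle. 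Now $\Ric^T>0$ says exactly that the transverse Ricci form $\rho^T>0$, i.e. that $(\mathbf{K}^T)^{-1}$ is a basic-positive line bundle. Applying the transverse Kodaira--Nakano vanishing theorem with bidegree $p+q=(m-2)+2=m>m-1=\dim_{\C}$ forces $H^{m-2,2}_b\bigl((\mathbf{K}^T)^{-1}\bigr)=0$, hence $H^2(\mathcal{A}^\bullet)=0$. Since the obstruction map $\theta\colon H^1(\mathcal{A}^\bullet)\to H^2(\mathcal{A}^\bullet)$ then vanishes identically, the germ $\mathcal{V}=\theta^{-1}(0)$ equals the open set $\mathcal{U}\subseteq H^1(\mathcal{A}^\bullet)$ and the versal deformation space is smooth, as claimed.

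The main obstacle is analytic rather than algebraic: one must be sure that Hodge theory and the Bochner--Kodaira--Nakano identity genuinely hold for \emph{basic} forms valued in a transverse holomorphic bundle, which is the delicate point about transversely K\"{a}hler Riemannian foliations. This is exactly the content of El Kacimi-Alaoui's theory, so I would either cite it directly or, equivalently, establish the vanishing by running the Bochner argument on basic forms: for a basic harmonic representative $\phi\in\mathcal{A}^2$ the transverse Bochner--Kodaira--Nakano identity gives $0=\|\ol{\partial}_b\phi\|^2+\|\ol{\partial}_b^*\phi\|^2=\|\partial_b\phi\|^2+\|\partial_b^*\phi\|^2+\langle[i\rho^T,\Lambda]\phi,\phi\rangle$, and the curvature term $\langle[i\rho^T,\Lambda]\phi,\phi\rangle$ is strictly positive on forms of bidegree $(m-2,2)$ precisely because $\rho^T>0$ and $(m-2)+2>m-1$, forcing $\phi=0$. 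Either route uses only $\Ric^T>0$; for a genuine Sasaki--Einstein metric this follows from Proposition~\ref{prop:Sasaki-Ric} (indeed there $\Ric^T=2m\,g^T$), which is why the weaker hypothesis $\Ric^T>0$ already suffices.
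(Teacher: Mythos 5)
Your proof is correct and is essentially the paper's argument in Serre--dual packaging: the paper first applies basic Serre duality to identify $H^2(\mathcal{A}^\bullet)$ with $H^{m-3}_{\ol{\partial}_b}\bigl(\Gamma(\Lambda^{1,\bullet}_b\otimes\Lambda^{m-1,0}_b)\bigr)$ and then invokes transverse Kodaira--Nakano for the \emph{negative} bundle $\Lambda^{m-1,0}_b$ in total degree $(m-3)+1=m-2<m-1$, whereas you use the contraction isomorphism $\nu(\mathscr{F})^{1,0}\cong\Lambda^{m-2}\nu(\mathscr{F})^{*1,0}\otimes(\mathbf{K}^T)^{-1}$ and the positive-bundle form of the vanishing in total degree $m>m-1$. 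Both reductions rest on exactly the same two ingredients --- $\Ric^T>0$ making the transverse anticanonical bundle positive, and El Kacimi-Alaoui's transverse harmonic theory to justify Kodaira--Nakano for basic forms --- so the mathematical content is identical.
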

\begin{proof}
The basic version of Serre duality gives
\[  H^2(\mathcal{A}^{\bullet})= H^{m-3}_{\ol{\partial}_b}(\Gamma(\Lambda^{1,\bullet}_b \otimes\Lambda^{m-1,0}_b))=0,\]
where the second equality is given by
by Kodaira-Nakano vanishing, since $\Lambda^{m-1,0}_b <0$ and $(m-3)+1 =m-2 <m-1$.
The proof of Kodaira-Nakano vanishing in~\cite{GriHar78} goes through in transversally K\"{a}hler case using the
transversal harmonic theory of~\cite{ElKac90}.
\end{proof}

Since $\Ric^T >0$, the obstruction to lifting a deformation $\ol{J}_t,\ t\in\mathcal{U},$ to a deformation of
Sasakian structures vanish.
\begin{prop}\label{prop:vers-def-Sasaki}
Let $(M,g,\eta,\xi,\Phi)$ be Sasaki-Einstein (or just $\Ric^T >0$ is sufficient), then after possibly shrinking $\mathcal{U}$,
the deformation $\ol{J}_t,\ t\in\mathcal{U},$ lifts to a smooth family $(g_t, \eta_t,\xi,\Phi_t),\ t\in\mathcal{U},$
where $\Phi_t$ induces the transversal complex structure $\ol{J}_t$.
\end{prop}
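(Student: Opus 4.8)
The plan is to build the Sasakian structure back up from its transverse Kähler data, using that fixing the Reeb field $\xi$ rigidifies the basic cohomology class of the transverse Kähler form. A Sasakian structure $(g_t,\eta_t,\xi,\Phi_t)$ with fixed Reeb field $\xi$ inducing $\ol{J}_t$ is the same datum as a positive, closed, basic real $(1,1)_t$-form $\omega^T_t$ (a transverse Kähler form for $\ol{J}_t$): from such a form one recovers $\eta_t$ with $\eta_t(\xi)=1$ and $\tfrac12 d\eta_t$ descending to $\omega^T_t$, then $\Phi_t$ by $\Phi_t|_{D_t}=\ol{J}_t$ on $D_t=\ker\eta_t$ and $\Phi_t\xi=0$, and finally $g_t=\tfrac12 d\eta_t(\cdot,\Phi_t\cdot)+\eta_t\otimes\eta_t$. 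So the entire content is to produce, for $t$ near $0$, a transverse Kähler form $\omega^T_t$ compatible with $\ol{J}_t$, depending smoothly on $t$ and with $\omega^T_0=\omega^T=\tfrac12 d\eta$.

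First I would pin down the target class. Since the smooth foliation $\mathscr{F}_\xi$ and the field $\xi$ are unchanged throughout the deformation, the complex of basic forms and the basic de Rham cohomology $H^\bullet_b(\mathscr{F}_\xi)$ are independent of $t$. Any contact form $\eta_t$ with Reeb field $\xi$ satisfies $\eta_t-\eta=\zeta_t$ with $\zeta_t(\xi)=0$ and $\mathcal{L}_\xi\zeta_t=\iota_\xi d\zeta_t=0$, so $\zeta_t$ is basic and $\kappa:=[\tfrac12 d\eta_t]_b=[\tfrac12 d\eta]_b$ is forced. Hence I must show that the fixed real class $\kappa$ is represented by an $\ol{J}_t$-positive $(1,1)_t$-form for small $t$; this is exactly the transverse analogue of the Kodaira–Spencer stability of Kähler structures, and it is where $\Ric^T>0$ enters.

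To see $\kappa$ stays of type $(1,1)_t$ I would run the transverse Hodge/Frölicher theory of~\cite{ElKac90}. The basic Hodge numbers $h^{p,q}_{b,t}:=\dim H^{p,q}_{\ol{\partial}_b,t}$ are upper semicontinuous and satisfy $\sum_{p+q=k}h^{p,q}_{b,t}\ge\dim H^k_b(\mathscr{F}_\xi)$, with equality at $t=0$ since $\ol{J}_0$ is transverse Kähler; as $\dim H^k_b(\mathscr{F}_\xi)$ is a foliated invariant independent of $t$, equality persists for small $t$, yielding a basic $(p,q)_t$ Hodge decomposition and the transverse $\partial\ol{\partial}$-lemma. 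Moreover $\Ric^T>0$ makes the leaf space transverse Fano, forcing $h^{p,0}_b=0$ for $p\ge1$ at $t=0$ (the transverse Kodaira–Nakano vanishing already used in Proposition~\ref{prop:vers-def}), so by upper semicontinuity $H^{2,0}_{\ol{\partial}_b,t}=0$ for small $t$. Consequently the $(2,0)_t$-component of the fixed real class $\kappa$ is cohomologically trivial and $\kappa$ is of type $(1,1)_t$.

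With $\kappa$ of type $(1,1)_t$, the transverse $\partial\ol{\partial}$-lemma furnishes a basic form $\psi_t$, smooth in $t$ with $\psi_0=0$, such that $\omega^T_t:=\omega^T-d_b\psi_t$ has pure type $(1,1)_t$; then $\omega^T_t\to\omega^T$ as $t\to0$, and since positivity is an open condition, after shrinking $\mathcal{U}$ each $\omega^T_t$ is a transverse Kähler form for $\ol{J}_t$. Reassembling $\eta_t=\eta-\psi_t$ (adjusted so $\eta_t(\xi)=1$), $\Phi_t$, and $g_t$ as in the first paragraph, and checking the metric contact and normality identities, gives the desired smooth family $(g_t,\eta_t,\xi,\Phi_t)$ inducing $\ol{J}_t$. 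The main obstacle is the analytic heart of the two preceding paragraphs: pushing the Kodaira–Spencer stability argument — constancy of basic Hodge numbers, the transverse $\partial\ol{\partial}$-lemma, the vanishing $H^{2,0}_{b}=0$, and smooth dependence of the correction $\psi_t$ on $t$ — entirely into the transverse/basic setting, all of which ultimately rest on $\Ric^T>0$ and El Kacimi's transverse harmonic theory.
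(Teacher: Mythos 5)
Your overall strategy is sound and genuinely different in one respect from the paper's: you fix the basic cohomology class $\kappa=[\tfrac12 d\eta]_b$ from the outset (correctly observing that fixing the Reeb field forces it) and then correct the \emph{type} of the fixed representative $\omega^T$ with respect to $\ol{J}_t$, whereas the paper first invokes the stability theorem of \cite{ElKacGmi97} to produce a smooth family of transverse K\"ahler forms $\omega^T_t$ compatible with $\ol{J}_t$ in a possibly different class, and then corrects the \emph{class} by adding the harmonic representative $R_t(\alpha_t)$ of $[\omega^T-\omega^T_t]$ and an exact term $d\beta_t$ built from the Green's operator; the vanishing $H^{0,k}_{b}=0$ (from $\Ric^T>0$ via the Weitzenb\"ock formula) is the common engine, since it forces harmonic $2$-forms, respectively the $(0,2)_t$-component of $\kappa$, to be trivial. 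Your route has the advantage of not needing the El Kacimi--Gmira input at all.

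The one step that does not hold up as written is the middle of your second paragraph: degeneration of the (basic) Fr\"olicher spectral sequence, i.e.\ $\sum_{p+q=k}h^{p,q}_{b,t}=\dim H^k_b$, does \emph{not} imply a Hodge decomposition or the $\partial\ol{\partial}$-lemma (compact non-K\"ahler surfaces already satisfy Fr\"olicher degeneration while failing both), and in any case the transverse $\partial\ol{\partial}$-lemma and the type decomposition of harmonic forms for $\ol{J}_t$ presuppose a transverse K\"ahler metric for $\ol{J}_t$ --- which is exactly what you are constructing, so invoking them here is circular. Fortunately you do not need them: upper semicontinuity of $\dim H^{0,2}_{\ol{\partial}_b,t}$ (available with only a smoothly varying transverse \emph{Hermitian} metric, by El Kacimi's transverse elliptic theory) together with $H^{0,2}_{\ol{\partial}_b,0}=0$ gives $H^{0,2}_{\ol{\partial}_b,t}=0$ for small $t$; then writing $\omega^T=\omega^{2,0}_t+\omega^{1,1}_t+\omega^{0,2}_t$ and solving $\ol{\partial}_b\alpha_t=\omega^{0,2}_t$ with $\alpha_t=\ol{\partial}_b^*G_t\,\omega^{0,2}_t$ (smooth in $t$, vanishing at $t=0$), the real closed form $\omega^T-d(\alpha_t+\ol{\alpha}_t)$ is of pure type $(1,1)_t$ and positive for small $t$. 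If you replace the Fr\"olicher/$\partial\ol{\partial}$-lemma appeal by this direct argument, your proof is complete and correct.
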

\begin{proof}
We first show that the basic Dolbeault cohomology $H_b^{0,k} =H^k_{\ol{\partial}_b}(\Lambda^{0,\bullet}_b)=\{0\}$.
This can be proved using Kodaira vanishing as above or from the Weitzenb\"{o}ck formula on $\psi\in\Omega_b^{0,k}$
\begin{equation}\label{eq:Weit-Ric}
2\Delta_{\ol{\partial}_b}\psi_{\ol{\alpha}_1 \ldots\ol{\alpha}_k} =\ol{\Delta}^T \psi_{\ol{\alpha}_1 \ldots\ol{\alpha}_k}
+\sum_{j=1}^k(g^T)^{\beta\ol{\gamma}}\Ric^T_{\ol{\alpha}_j \beta}\psi_{\ol{\alpha}_1 \ldots\ol{\alpha}_{j-1}\ol{\gamma}\ol{\alpha}_{j+1}\ldots\ol{\alpha}_k},
\end{equation}
where $\ol{\Delta}^T =(\nabla^T)^* \nabla^T$ is the transversal rough Laplacian.
Then if $\psi$ is harmonic and $\Ric^T \geq\lambda g^T$ then integrating (\ref{eq:Weit-Ric}) gives
\[ 0\geq\int_M \bigl(\langle\nabla^T \psi,\nabla^T \psi\rangle +k\lambda\langle\psi,\psi\rangle\bigr)\,\mu_g,\]
where $\langle\cdot,\cdot\rangle$ is the Hermitian product and $\mu_g =\frac{1}{(m-1)!}\eta\wedge(\frac{1}{2}d\eta)^{m-1}$.
Therefore $\psi=0$.

By~\cite{ElKacGmi97} there is a family of transversal K\"{a}hler metrics with K\"{a}hler forms $\omega^T_t$ on
$(\mathscr{F}_\xi,\ol{J}_t)$ depending smoothly on $t\in\mathcal{U}$ with $\omega^T_0=\omega^T$.
The above argument shows that after shrinking
$\mathcal{U}$ the Dolbeault groups on $(\mathscr{F}_\xi,\ol{J}_t)$ also satisfy $H_{b,t}^{0,k}=\{0\}$.
Since the harmonic space $\mathcal{H}^2_{\Delta_{\ol{\partial}_{b,t}}}$, of the transverse Laplacian $\Delta_{\ol{\partial}_{b,t}}$
with respect to $\omega^T_t$, has constant dimension, by for example~\cite[Lemma 4.3]{Koi83}
there are isomorphisms $R_t :\mathcal{H}^2_{\Delta_{\ol{\partial}_{b}}}\rightarrow\mathcal{H}^2_{\Delta_{\ol{\partial}_{b,t}}}$
depending smoothly on $t$.  There exists smoothly varying $\alpha_t\in\mathcal{H}^2_{\Delta_{\ol{\partial}_{b}}}$ so that
$R_t(\alpha_t) =[\omega^T -\omega^T_t]_h$, the harmonic component.  Let $G$ be the Green's operator for
$\Delta_{\ol{\partial}_{b}}$.  Let $\beta_t =d^* G(\omega_t^T + R_t(\alpha_t) -\omega^T)$, and define
$\eta_t =\eta +\beta_t$.  Then $\frac{1}{2}d\eta_t =\omega_t^T + R_t(\alpha_t)$ which is of type $(1,1)$ and is
positive definite for small enough $t$.

The family of Sasakian structures $(g_t,\eta_t,\xi,\Phi_t)$ is defined by lifting $\ol{J}_t$ to $\ker\eta_t$ to get
$\Phi_t$, while
\begin{equation}\label{eq:Sasak-def}
g_t =\frac{1}{2}d\eta_t (\cdot,\Phi_t \cdot) +\eta_t \otimes\eta_t.
\end{equation}
\end{proof}

\begin{remark}
With the assumption $c_1(\mathscr{F}_\xi,\ol{J}_t) >0$ made in this article, the deformations in Proposition~\ref{prop:vers-def-Sasaki}
along with transversal K\"{a}hler deformations
\[ \tilde{\eta} =\eta+ d^c \varphi,\quad \tilde{\Phi} =\Phi -\xi\otimes\tilde{\eta}\circ\Phi, \]
for $\varphi\in C^\infty_b(M)$ basic, give all local deformations of the Sasakian structure fixing the Reeb vector field.
See~\cite{vanCoTip15} for details.
\end{remark}

Since a Sasaki-Einstein structure is transversally K\"{a}hler-Einstein by Proposition~\ref{prop:Sasaki-Ric}.\ref{eq:submer-Ric},
a necessary condition for a compatible Sasaki-Einstein structure is that
\begin{equation}\label{eq:KE-cond}
\pi c_1(\mathscr{F}_\xi,\ol{J})=m\omega^T.
\end{equation}
It follows from the proof of Proposition~\ref{prop:vers-def-Sasaki} that if (\ref{eq:KE-cond}) holds for
$(M,g,\eta,\xi,\Phi)$, then the family $(g_t, \eta_t,\xi,\Phi_t),\ t\in\mathcal{U},$ also satisfies
\[ \pi c_1(\mathscr{F}_\xi,\ol{J}_t)=m\omega_t^T. \]

We consider some properties of a first order deformation through Sasakian metrics which will be used later.
We differentiate (\ref{eq:Sasak-def}) and use the notation
\[ \dot{\ol{J}}_t =I,\quad \dot{\omega}^T =\phi,\ \text{ and }\ \dot{g}^T =h\]
where we have
\begin{equation}
d\dot{\eta}=2\phi.
\end{equation}
Since $\omega_t^T(X,Y) =g^T_t(\ol{J}_t X, Y)$, we have
\begin{gather}
\phi_{\alpha\beta} =\sqrt{-1}h_{\alpha\beta} +I_{\alpha\beta}\label{eq:Sasak-def1} \\
\phi_{\alpha\ol{\beta}} = \sqrt{-1}h_{\alpha\ol{\beta}}.\label{eq:Sasak-def2}
\end{gather}
Note that since $I$ anti-commutes with $\ol{J}_0$, it only has components $\tensor{I}{_\alpha^{\ol{\beta}}}$ and
$\tensor{I}{_{\ol{\alpha}}^\beta}$.

In addition differentiating
\begin{equation}
g^T_t(\ol{J}_t X,Y)+ g^T_t(X,\ol{J}_t Y)=0
\end{equation}
gives
\begin{equation}\label{eq:Sasak-def3}
2\sqrt{-1}h_{\alpha\beta} + (I_{\alpha\beta} +I_{\beta\alpha}) =0.
\end{equation}
Finally (\ref{eq:Sasak-def1}) and (\ref{eq:Sasak-def3}) give
\begin{equation}\label{eq:Sasak-def4}
\phi_{\alpha\beta} =\frac{1}{2}(I_{\alpha\beta} -I_{\beta\alpha}).
\end{equation}

\subsection{Skew-Hermitian Einstein deformations}

By Proposition~\ref{prop:Sasaki-Ric}.\ref{eq:submer-Ric} if $(M,g,\eta,\xi,\Phi)$ is Sasaki-Einstein then
the transversal K\"{a}hler metric $g^T$ on $\mathscr{F}_\xi$ is Einstein
\[ \Ric_{g^T} =2m g^T.\]

We define the space $\EED(g^T)$ just as in Section~\ref{subsec:Einst-def} using the transversal Levi-Civita connection
defined in (\ref{eq:LC-trans}), that is
\[\EED(g^T) =\{h\in\Gamma\bigl(\Sym^2 T_b^*M \bigr)\, |\,\tr_{g^T} h  =\delta_{g^T} h=0,\ \bigl(\ol{\Delta}^T +2L^T \bigr) h=0\}, \]
where $L^T$ is defined as in (\ref{eq:Einst-lin}) but with the transverse curvature $R^T$.

Given $h\in\Gamma\bigl(\Sym^2 T_b^*M \bigr)$ we decompose $h$ into its Hermitian $h_H$ and anti-Hermitian $h_A$ parts
with respect to the transversal complex structure $\ol{J}$ on $\nu(\mathscr{F}_\xi)$, i.e.
\[ h_H(\ol{J}X,\ol{J}Y) = h_H(X,Y),\quad h_A(\ol{J}X,\ol{J}Y) =-h_A(X,Y). \]

We denote by $\EED_H(g^T)$ (resp. $\EED_A(g^T)$) the space of Hermitian (resp. anti-Hermitian) essential infinitesimal Einstein
deformations.  The following is an adaptation of results of N. Koiso~\cite{Koi83} to the current situation.
\begin{prop}\label{prop:Herm-def}
Suppose $(M,g,\eta,\xi,\Phi)$ is Sasaki-Einstein.  Then we have the decomposition
\begin{equation}\label{eq:Her-ant}
\EED(g^T) =\EED_H(g^T)\oplus \EED_A(g^T),
\end{equation}
and $h\in\Gamma\bigl(\Sym^2 \Lambda_b^{0,1} \bigr)$ is an element of $\EED_A(g^T)$ if and only if
\begin{align}\label{eq:harm-ident1}
\nabla^T_{\ol{\alpha}} h_{\ol{\beta}\ol{\gamma}} -\nabla^T_{\ol{\beta}}h_{\ol{\alpha}\ol{\gamma}} & = 0 \\
(\nabla^T)^{\ol{\alpha}} h_{\ol{\alpha}\ol{\beta}} & =0 \label{eq:harm-ident2}.
\end{align}
\end{prop}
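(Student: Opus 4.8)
The plan is to adapt N.~Koiso's study of Einstein deformations of Kähler–Einstein metrics~\cite{Koi83} to the transversal Kähler–Einstein geometry of $(\mathscr{F}_\xi,\ol{J})$, replacing the global Hodge theory by the transverse Hodge theory of~\cite{ElKac90}. First I fix the algebraic picture. Complexifying, a basic symmetric $2$-tensor splits by type into components $h_{\alpha\beta}$, $h_{\alpha\ol{\beta}}$, $h_{\ol{\alpha}\ol{\beta}}$, and the involution $\tau\colon h\mapsto h(\ol{J}\cdot,\ol{J}\cdot)$ acts as $+1$ on the $(1,1)$-part and as $-1$ on the $(2,0)\oplus(0,2)$-part. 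Hence $\EED_A(g^T)$ consists precisely of the anti-Hermitian elements, each determined by its $(0,2)$-component $h_{\ol{\alpha}\ol{\beta}}\in\Gamma(\Sym^2\Lambda_b^{0,1})$; raising an index, $\phi_{\ol{\beta}}{}^{\gamma}=(g^T)^{\gamma\ol{\alpha}}h_{\ol{\alpha}\ol{\beta}}$ is a basic $\nu(\mathscr{F}_\xi)^{1,0}$-valued $(0,1)$-form which is symmetric exactly because $h$ is, and a direct computation identifies (\ref{eq:harm-ident1}) with $\ol{\partial}_b\phi=0$ and (\ref{eq:harm-ident2}) with $\ol{\partial}_b^*\phi=0$. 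So the content of the second assertion is that, for anti-Hermitian trace-free $h$, one has $h\in\EED_A(g^T)$ if and only if $\Delta_{\ol{\partial}_b}\phi=0$.

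The heart of the proof is a transversal Bochner–Kodaira–Weitzenböck identity. Starting from $\ol{\Delta}^T h_{\ol{\beta}\ol{\gamma}}=-(g^T)^{\alpha\ol{\delta}}\bigl(\nabla^T_\alpha\nabla^T_{\ol{\delta}}+\nabla^T_{\ol{\delta}}\nabla^T_\alpha\bigr)h_{\ol{\beta}\ol{\gamma}}$, I would commute the transversal covariant derivatives by the Ricci identity for $\nabla^T$ and substitute $\Ric^T=2m\,g^T$. The curvature acting on the form index contributes $+2m$ through a term of the type appearing in (\ref{eq:Weit-Ric}), while the curvature acting on the $\nu(\mathscr{F}_\xi)^{1,0}$-index contributes $-2m$; the two zeroth-order pieces cancel, and the remaining full-curvature terms assemble, using the Kähler symmetries of the transversal curvature tensor, into exactly $2L^T h$. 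The upshot is an identity of the schematic form $(\ol{\Delta}^T+2L^T)h=2\,\Delta_{\ol{\partial}_b}\phi$ valid for every anti-Hermitian trace-free basic $h$, with no spurious zeroth-order constant. Since $M$ is compact and the transverse Hodge theory of~\cite{ElKac90} applies to the basic complex, $\Delta_{\ol{\partial}_b}\phi=0$ is equivalent to $\ol{\partial}_b\phi=0$ together with $\ol{\partial}_b^*\phi=0$, that is, to (\ref{eq:harm-ident1}) and (\ref{eq:harm-ident2}). This proves the characterization of $\EED_A(g^T)$ in both directions at once.

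The decomposition (\ref{eq:Her-ant}) then follows cleanly. Because the transversal structure is Kähler, $\nabla^T\ol{J}=0$, so $\tau$ is parallel and commutes with $\ol{\Delta}^T$, and the Kähler symmetries of the curvature give $[\tau,L^T]=0$; moreover $\tau$ preserves trace-free tensors. Thus for $h\in\EED(g^T)$ both parts $h_H=\tfrac12(h+\tau h)$ and $h_A=\tfrac12(h-\tau h)$ are trace-free and lie in the kernel of $\ol{\Delta}^T+2L^T$. Applying the identity of the previous paragraph to the anti-Hermitian part gives $\Delta_{\ol{\partial}_b}\phi=0$, hence $\ol{\partial}_b^*\phi=0$, i.e.\ $\delta_{g^T}h_A=0$; consequently $\delta_{g^T}h_H=0$ as well. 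Therefore $h_H\in\EED_H(g^T)$ and $h_A\in\EED_A(g^T)$, giving $\EED(g^T)=\EED_H(g^T)\oplus\EED_A(g^T)$. Note that this is precisely the point at which the automatic divergence-freeness of each piece, which is not obvious a priori, is forced by the Weitzenböck identity.

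The step I expect to be the main obstacle is the Weitzenböck computation itself: organizing the commutators of the transversal connection so that every curvature term is accounted for, checking that the basic (foliate) character is preserved throughout so that the integration by parts underlying~\cite{ElKac90} is legitimate, and---most importantly---verifying that the Einstein constant $2m$ is exactly what cancels the zeroth-order curvature contributions, so that $(\ol{\Delta}^T+2L^T)h$ becomes a genuine multiple of $\Delta_{\ol{\partial}_b}\phi$ rather than $\Delta_{\ol{\partial}_b}\phi$ shifted by a nonzero constant. A subsidiary point needing care is that $\Delta_{\ol{\partial}_b}$ preserves the symmetric subspace $\Sym^2\Lambda_b^{0,1}\subset\Lambda_b^{0,1}\otimes\nu(\mathscr{F}_\xi)^{1,0}$, which again rests on $\Ric^T$ being a multiple of $g^T$.
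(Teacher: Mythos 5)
Your proof is correct and follows essentially the same route as the paper: both rest on the transversal Weitzenb\"ock identity $2\Delta_{\ol{\partial}_b}h^\sharp=\bigl(\ol{\Delta}^T+2L^T\bigr)h^\sharp$ for anti-Hermitian basic tensors (the paper's (\ref{eq:Weit-Lap})), with the Sasaki--Einstein condition cancelling the zeroth-order Ricci contributions, and then deduce both the splitting (\ref{eq:Her-ant}) and the harmonic characterization (\ref{eq:harm-ident1})--(\ref{eq:harm-ident2}) from that single formula together with transverse Hodge theory. The only difference is one of detail: you supply the derivation of the identity and the commutation of the involution $h\mapsto h(\ol{J}\cdot,\ol{J}\cdot)$ with $\ol{\Delta}^T+2L^T$, which the paper asserts without proof.
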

\begin{proof}
Suppose $h\in\Gamma\bigl(\Sym^2 \Lambda_b^{0,1} \bigr)$.  If $h^\sharp$ denotes raising the second index, then
$h^\sharp\in\mathcal{A}^1$.  We have the Weitzenb\"{o}ck formula
\begin{equation}\label{eq:Weit-Lap}
\ol{\partial}_b \ol{\partial}_b^* h^\sharp +\ol{\partial}_b^* \ol{\partial}_b h^\sharp =\frac{1}{2}\bigl(\ol{\Delta}^T +2L^T \bigr)h^\sharp.
\end{equation}

Suppose $h\in\EED(g^T)$.  Then $\bigl(\ol{\Delta}^T +2L^T \bigr)h_A =0$ and (\ref{eq:Weit-Lap}) implies
$\delta_{g^T}h_A =0$.  Trivially, $\tr_{g^T} h_A =0$.  Thus $h_A \in\EED(g^T)$ and (\ref{eq:Her-ant}) follows.

It follows from (\ref{eq:Weit-Lap}) that $h\in\Gamma\bigl(\Sym^2 \Lambda_b^{0,1} \bigr)$ is in $\EED_A(g^T)$
if and only if (\ref{eq:harm-ident1}) and (\ref{eq:harm-ident2}) hold.
\end{proof}

Let $\mathcal{H}_{\mathcal{A}}^k$ denote the k-th harmonic space of the complex (\ref{eq:Dol-comp}).
\begin{cor}\label{cor:comp-Einst-def}
Let $(M,g,\eta,\xi,\Phi)$ be Sasaki-Einstein.  Then there is a canonical isomorphism
\begin{equation}
\begin{array}{rcl}
\mathcal{H}_{\mathcal{A}}^1 & \simarrow & \EED_A(g^T) \\
h_{\ol{\alpha}\ol{\beta}} & \longmapsto & -\sqrt{-1}h_{\ol{\alpha}\ol{\beta}}.
\end{array}
\end{equation}
\end{cor}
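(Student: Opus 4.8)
The plan is to read the corollary as the surjectivity refinement of Proposition~\ref{prop:Herm-def}. By that proposition together with the Weitzenb\"{o}ck identity (\ref{eq:Weit-Lap}), a symmetric $h\in\Gamma\bigl(\Sym^2\Lambda^{0,1}_b\bigr)$ lies in $\EED_A(g^T)$ precisely when $\ol{\partial}_b h^\sharp=0$ and $\ol{\partial}_b^* h^\sharp=0$, i.e. when $h^\sharp$ (raising the second index) is a $\Delta_{\ol{\partial}_b}$-harmonic element of $\mathcal{A}^1$. Thus $h\mapsto h^\sharp$ already embeds $\EED_A(g^T)$ into $\mathcal{H}_{\mathcal{A}}^1$, and the stated map $h_{\ol\alpha\ol\beta}\mapsto -\sqrt{-1}h_{\ol\alpha\ol\beta}$ (lower the $\nu^{1,0}$-index with $g^T$ and rescale) is its inverse up to the harmless constant $-\sqrt{-1}$, whose only role is to fix the normalization coming from (\ref{eq:Sasak-def2}). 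The entire content is therefore to prove that this inverse is well defined, that is, that \emph{every} harmonic representative, once its bundle index is lowered, is a \emph{symmetric} tensor.

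So fix $\psi\in\mathcal{H}_{\mathcal{A}}^1$ and write its lowered form as $T_{\ol\alpha\ol\beta}$, with symmetric part $S\in\Gamma\bigl(\Sym^2\Lambda^{0,1}_b\bigr)$ and antisymmetric part $A\in\Gamma\bigl(\Lambda^{0,2}_b\bigr)$. Harmonicity of $\psi$ is exactly the two identities $\nabla^T_{\ol\alpha}T_{\ol\beta\ol\gamma}=\nabla^T_{\ol\beta}T_{\ol\alpha\ol\gamma}$ (from $\ol{\partial}_b\psi=0$) and $(\nabla^T)^{\ol\alpha}T_{\ol\alpha\ol\beta}=0$ (from $\ol{\partial}_b^*\psi=0$), i.e. (\ref{eq:harm-ident1})--(\ref{eq:harm-ident2}) read for the possibly non-symmetric $T$. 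If $A=0$, then $T=S$ satisfies (\ref{eq:harm-ident1})--(\ref{eq:harm-ident2}) and Proposition~\ref{prop:Herm-def} gives $-\sqrt{-1}T\in\EED_A(g^T)$; together with the embedding above this yields the asserted linear bijection, and the remaining reality bookkeeping (that $-\sqrt{-1}T$ is the $(0,2)$-part of a genuine real anti-Hermitian tensor) is routine.

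First I would check that $A$ is $\ol{\partial}_b$-closed. Writing $B_{\ol\alpha\ol\beta\ol\gamma}=\nabla^T_{\ol\alpha}T_{\ol\beta\ol\gamma}$, the relation $\ol{\partial}_b\psi=0$ says $B$ is symmetric in its first two slots, and the alternating sum defining $\ol{\partial}_b A$ then telescopes to zero using only this symmetry. More structurally, the fibrewise ``lower-and-wedge'' map $\tau\colon\mathcal{A}^\bullet\to\Lambda^{0,\bullet+1}_b$ is a morphism of complexes (it commutes with $\ol{\partial}_b$ because $\nabla^T$ is metric and preserves the type splitting), so $\tau(\psi)$, which is $A$ up to a constant, is $\ol{\partial}_b$-closed and represents a class in $H^{0,2}_b$; since $H^{0,2}_b=\{0\}$ by the argument of Proposition~\ref{prop:vers-def-Sasaki}, $A$ is $\ol{\partial}_b$-exact.

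The main obstacle is that closedness and exactness alone do not give $A=0$: the identities (\ref{eq:harm-ident1})--(\ref{eq:harm-ident2}) control only the divergence of $T$ on its \emph{form} index, whereas $\ol{\partial}_b^* A$ involves the divergence on the \emph{other} index, and passing between the two requires commuting a holomorphic with an antiholomorphic derivative, i.e. the transversal curvature. The plan is therefore to show that $A$ is in addition $\ol{\partial}_b$-coclosed by a Bochner identity in which the transversal Einstein condition $\Ric^T=2m\,g^T>0$ supplies the decisive sign; once $A$ is a harmonic basic $(0,2)$-form, the Weitzenb\"{o}ck formula (\ref{eq:Weit-Ric}) with positive transversal Ricci forces $\nabla^T A=0$ and hence $A=0$ (this is precisely the vanishing $\mathcal{H}^{0,2}_b=\{0\}$ established in Proposition~\ref{prop:vers-def-Sasaki}). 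This curvature step, the transversal analogue of Koiso's symmetry lemma for K\"{a}hler--Einstein metrics, is where essentially all the work lies.
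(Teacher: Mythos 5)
Your overall architecture matches the paper's: both reduce the corollary to showing that the antisymmetric part of a harmonic element of $\mathcal{A}^1$ vanishes, and both finish with the $(0,2)$-form Weitzenb\"ock formula (\ref{eq:Weit-Ric}) and $\Ric^T=2m\,g^T>0$. But your write-up has a genuine gap at exactly the point you flag yourself: you never establish that $A$ is coclosed (equivalently, that $A$ with its index raised is itself harmonic in $\mathcal{A}^1$). You announce ``the plan is to show that $A$ is $\ol{\partial}_b$-coclosed by a Bochner identity in which the transversal Einstein condition supplies the decisive sign'' and then state that this is ``where essentially all the work lies'' --- which is to say, the decisive step is not carried out. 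A proof that defers its central lemma to an unspecified Bochner identity is not a proof; moreover, your diagnosis of what that step requires (a curvature commutation trading the divergence on one index for the divergence on the other, analogous to Koiso's symmetry lemma) is not how the difficulty is actually resolved here, so it is not clear your plan would close cleanly.

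The missing ingredient is already in your hands: the Weitzenb\"ock identity (\ref{eq:Weit-Lap}) says $2\Delta_{\ol{\partial}_b}=\ol{\Delta}^T+2L^T$ on $\mathcal{A}^1$, and the right-hand side commutes with transposition of the two (lowered) indices --- $\ol{\Delta}^T$ trivially, and $L^T$ by the pair symmetry of the transversal K\"ahler curvature. Hence $\mathcal{H}^1_{\mathcal{A}}=\mathcal{H}^1_{\mathcal{A},S}\oplus\mathcal{H}^1_{\mathcal{A},A}$: the antisymmetric part $A$ of a harmonic element is automatically harmonic, i.e.\ closed \emph{and} coclosed, with no new Bochner argument needed. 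Then for antisymmetric $\phi$ of type $(0,2)$ the K\"ahler symmetry $R_{\ol{\alpha}\gamma\ol{\beta}\delta}=R_{\ol{\alpha}\delta\ol{\beta}\gamma}$ gives $L^T\phi=0$, so harmonicity forces $\ol{\Delta}^T\phi=0$ and hence $\nabla^T\phi=0$; feeding the resulting parallel $(0,2)$-form into (\ref{eq:Weit-Ric}) yields $0=4m\phi$, so $\phi=0$. Your preliminary computation that $A$ is $\ol{\partial}_b$-closed, and the appeal to $H^{0,2}_b=\{0\}$, are correct but become superfluous once the splitting of the harmonic space is in place. I would recommend rewriting the argument around the operator identity (\ref{eq:Weit-Lap}) rather than around a to-be-found Bochner formula.
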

\begin{proof}
First note that from Proposition~\ref{prop:Herm-def} and formula (\ref{eq:Weit-Lap}) we have a decomposition
\begin{equation}
\mathcal{H}_{\mathcal{A}}^1 =\mathcal{H}_{\mathcal{A},S}^1 \oplus\mathcal{H}_{\mathcal{A},A}^1,
\end{equation}
into symmetric and anti-symmetric parts.
If $\phi\in\mathcal{H}_{\mathcal{A},A}^1$ then $L\phi =0$.  Thus (\ref{eq:Weit-Lap}) shows that
$\ol{\Delta}^T \phi =0$, and we have $\nabla^T \phi =0$.  Lowering an index gives an harmonic
$\phi_{\ol{\alpha}\ol{\beta}} \in\Omega_b^{0,2}$.  Since $M$ is Sasaki-Einstein (\ref{eq:Weit-Ric}) becomes
\[ 2\Delta_{\ol{\partial}_b}\phi_{\ol{\alpha}\ol{\beta}}=\ol{\Delta}^T \phi_{\ol{\alpha}\ol{\beta}}+4m \phi_{\ol{\alpha}\ol{\beta}}.\]
Since all but the last term are zero, $\phi_{\ol{\alpha}\ol{\beta}}=0$.
\end{proof}

\begin{lem}\label{lem:Einst-def-sub}
Let $(M,g,\eta,\xi,\Phi)$ be Sasaki-Einstein and $h^T\in\Gamma\bigl(\Sym^2 T_b^*M \bigr)$ an element of $\EED_A(g^T)$.
If $h=\pi^* h^T$ is the pull-back of the basic tensor $h^T$ to $M$ then $h\in\EED(g)$.
\end{lem}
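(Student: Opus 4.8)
The plan is to verify directly that $h=\pi^* h^T$ satisfies the three conditions of the Berger--Ebin characterization of $\EED(g)$, namely $\tr_g h=0$, $\delta_g h=0$, and $\bigl(\ol{\Delta}+2L\bigr)h=0$, by pushing each condition down to its transversal counterpart, where $h^T\in\EED_A(g^T)$ supplies the vanishing. Throughout I would work in a local orthonormal frame $\{e_1,\dots,e_{2m-2},\xi\}$ adapted to $TM=D\oplus L_\xi$, using the Sasakian structure equations $\nabla_X\xi=\Phi X$, $\Phi^2=-\mathbb{1}+\eta\otimes\xi$ on $D$, and $(\nabla_X\Phi)Y=g(X,Y)\xi-\eta(Y)X$, together with the O'Neill formulas for the Reeb submersion (whose fibres are geodesics, so only the integrability tensor $A\sim\Phi$ contributes). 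Since $h$ is basic and purely horizontal, $h(\xi,\cdot)\equiv0$ and $\mathcal{L}_\xi h=0$; the trace condition is then immediate, $\tr_g h=\tr_{g^T}h^T=0$.

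For the divergence I would expand $(\delta_g h)_i=-\sum_j\nabla_{e_j}h(e_j,e_i)-\nabla_\xi h(\xi,e_i)$. The vertical term vanishes because $h(\xi,\cdot)\equiv0$ and $\nabla_\xi\xi=0$. In the horizontal sum the O'Neill corrections relating $\nabla_{e_j}$ to $\nabla^T_{e_j}$ land in the $\xi$-direction (they are multiples of $A_{e_j}=-g(\Phi e_j,\cdot)\xi$) and are killed by $h(\cdot,\xi)=0$, while the term $h(\nabla_{e_j}e_j,e_i)$ drops out since $g(\Phi e_j,e_j)=0$ by skew-symmetry of $\Phi$. Hence $\delta_g h=\pi^*(\delta_{g^T}h^T)$. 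As $h^T$ is anti-Hermitian it has no $(1,1)$-component, so $(\delta_{g^T}h^T)_{\ol{\beta}}=-(\nabla^T)^{\ol{\alpha}}h_{\ol{\alpha}\ol{\beta}}$, which vanishes by (\ref{eq:harm-ident2}); the conjugate component vanishes likewise, so $\delta_g h=0$.

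The substantive step is $\bigl(\ol{\Delta}+2L\bigr)h=0$, which I would obtain by splitting both operators into horizontal and vertical parts. The vertical part of $\ol{\Delta}=\nabla^*\nabla$ contributes $-\nabla_\xi\nabla_\xi h$; using $\mathcal{L}_\xi h=0$, $\nabla_X\xi=\Phi X$ and $\nabla_\xi\Phi=0$, one finds $\nabla_\xi h=-h(\Phi\,\cdot\,,\cdot)-h(\cdot,\Phi\,\cdot)$, which equals $2\sqrt{-1}\,h$ on the $(0,2)$-part since $\Phi$ acts as $-\sqrt{-1}$ on $(0,1)$-vectors, whence $-\nabla_\xi\nabla_\xi h=4h$. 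The horizontal part reproduces the pulled-back $\ol{\Delta}^T h^T$ plus cross-terms in which $\Phi$ forces $\xi$ into a slot of $\nabla h$; each such term is algebraic, reducing via $\Phi^2=-\mathbb{1}$ and the algebraic expression for $\nabla\Phi$ to a scalar multiple of $h$ on the $(0,2)$-part. Likewise $2Lh=2\,\pi^*(L^T h^T)$ plus a curvature correction from the O'Neill relation for $R-R^T$, which is quadratic in $\Phi$ and again yields a scalar multiple of $h$. Collecting terms gives $\bigl(\ol{\Delta}+2L\bigr)h=\pi^*\bigl(\bigl(\ol{\Delta}^T+2L^T\bigr)h^T\bigr)+\kappa\,h$ for a universal constant $\kappa$: the first summand vanishes since $h^T\in\EED_A(g^T)$, and the Sasaki-Einstein normalization, $\Ric^T=\Ric_g+2g^T$ with $\Ric_g=(n-1)g$ and $\Ric^T=2m\,g^T$ from Proposition~\ref{prop:Sasaki-Ric}, forces $\kappa=0$. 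The main obstacle is precisely this last cancellation — bookkeeping the O'Neill correction constants from the rough Laplacian and from the curvature term and checking that together they cancel the $+4h$ coming from the Reeb direction; it is the anti-Hermitian (pure $(0,2)$) type of $h$ that keeps every correction proportional to $h$ rather than introducing terms of mixed type, so that the whole comparison collapses to a single scalar identity pinned down by the Einstein normalization.
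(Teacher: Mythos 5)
Your overall strategy coincides with the paper's: verify the Berger--Ebin conditions for $h=\pi^*h^T$ by comparing $\ol{\Delta}$, $L$, $\delta_g$ and $\tr_g$ with their transversal counterparts via the O'Neill formulas for the Reeb submersion, using the anti-Hermitian type of $h^T$ to keep all correction terms proportional to $h$. The trace and divergence arguments are fine, and you correctly reduce the problem to establishing $\bigl(\ol{\Delta}+2L\bigr)h=\pi^*\bigl((\ol{\Delta}^T+2L^T)h^T\bigr)+\kappa\,h$ with $\kappa=0$.

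The gap is in how you dispose of $\kappa$. You assert that the Sasaki--Einstein normalization $\Ric^T=2m\,g^T$, $\Ric_g=(n-1)g$ forces $\kappa=0$, but $\kappa$ has nothing to do with the Einstein equations: every correction term comes from the O'Neill tensor $A_XY=-g(\Phi X,Y)\xi$ and the submersion curvature identities (\ref{eq:subm-curv1})--(\ref{eq:subm-curv2}), which are consequences of the Sasakian structure alone. The value of $\kappa$ must be computed, and it depends on the Hermitian type of $h^T$, not on any normalization of the Ricci curvature. Concretely, the paper finds $\ol{\Delta}h(X,Y)=\pi^*(\ol{\Delta}^Th^T)(X,Y)+4h(X,Y)-2h(\Phi X,\Phi Y)$ and $Lh(X,Y)=\pi^*(L^Th^T)(X,Y)+2h(\Phi X,\Phi Y)+h(\Phi Y,\Phi X)$; since $h(\Phi X,\Phi Y)=-h(X,Y)$ in the anti-Hermitian case, the corrections are $+6h$ from $\ol{\Delta}$ and $-6h$ from $2L$, and they cancel. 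For a Hermitian $h^T$ the identical bookkeeping gives $\kappa=8\neq 0$ --- which is precisely the content of the remark following the lemma, that the pull-back of a non-zero Hermitian tensor is \emph{not} an infinitesimal Einstein deformation. Any soft argument deriving $\kappa=0$ from the Einstein normalization would apply verbatim to the Hermitian case and is therefore untenable; the cancellation you yourself flag as ``the main obstacle'' has to be carried out explicitly, and that computation is the substance of the paper's proof.
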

\begin{proof}
First note that the O'Neill tensor of the local projection $\pi$ onto the leaf space of the foliation $\mathscr{F}_{\xi}$ is
\begin{equation}
A_X Y =g(\xi ,\nabla_X Y)\xi =-g(\Phi X,Y)\xi, \quad X,Y \in\Gamma(D).
\end{equation}
We will use the formulae of O'Neill on the curvature of a Riemannian submersion.  See~\cite[ch. 9]{Bes87} for more details.

If $X,Y,Z,W \in\Gamma(D)$ are basic vector fields, then we have
\begin{equation}
\begin{split}\label{eq:subm-curv1}
g(R(X,Y)Z,W) & = g^T(R^T(X,Y)Z,W) +2g(\Phi X,Y)g(\Phi Z,W)+g(\Phi X,Z)g(\Phi Y,W)  \\
				& -g(\Phi Y,Z)g(\Phi X,W),\\	
\end{split}
\end{equation}
\begin{equation}\label{eq:subm-curv2}
g(R(X,Y)\xi, W) = g(X,W)g(Y,\xi) -g(X,\xi)g(Y,W).
\end{equation}
A routine calculation shows that
\begin{align*}
\ol{\Delta} h(X,Y) & =\pi^*\bigl(\ol{\Delta}^T h^T \bigr)(X,Y) +4h(X,Y)-2h(\Phi X,\Phi Y), \\
\ol{\Delta} h(\xi,X) & =-2\delta h^T (\Phi X),\\
\ol{\Delta} h(\xi,\xi) & =-2\tr h^T.
\end{align*}

We compute from (\ref{eq:subm-curv1}) using an orthonormal frame $\{e_1 ,\ldots, e_{2m-2}, \xi\}$ that
\begin{equation}
\begin{split}
Lh(X,Y) = & \pi^*\bigl(L^T h^T\bigr)(X,Y) +\sum_{i,j}\Bigl[ 2g(\Phi X,e_i)g(\Phi Y,e_j)+g(\Phi X,Y)g(\Phi e_i, e_j) \\
			&-g(\Phi e_i,Y)g(\Phi X,e_j) \Bigr]h(e_i,e_j) \\
			& =\pi^*\bigl(L^T h^T\bigr)(X,Y) +2h(\Phi X,\Phi Y) +h(\Phi Y,\Phi X) \\
			& =\pi^*\bigl(L^T h^T\bigr)(X,Y) -3h(X,Y).\\
\end{split}			
\end{equation}
And (\ref{eq:subm-curv2}) easily gives
\begin{equation}
Lh(X,\xi)=-g(\xi,X)\tr h + h(\xi,X)=0.
\end{equation}

It follows from the above equations that
\begin{equation}
\Bigl(\ol{\Delta}+2L\Bigr)h =\pi^*\bigl(\ol{\Delta}^T h^T \bigr)+2\pi^*\bigl(L^T h^T\bigr),
\end{equation}
and $\delta h =0$, $\tr h =0$ are trivial.
\end{proof}
\begin{remark}
It is clear from the proof that a non-zero $h=\pi^*h^T$ is not an infinitesimal Einstein deformation if $h^T$ is not anti-Hermitian.
\end{remark}

\subsection{Infinitesimal deformations on Sasaki-Einstein manifolds}

From Proposition~\ref{prop:Herm-def} and Lemma~\ref{lem:Einst-def-sub} for any $\beta\in\mathcal{H}^1_{\mathcal{A}}$
we have $h^\beta \in\EED(g)$, where $h^\beta (X,Y)=g^T(\ol{J}\beta X, Y)$.   We define as in Section~\ref{subsubsec:spin-def}
$\Psi^{\beta,\sigma_0}(X)=\alpha(X)\sigma_0$, where $\alpha =-\frac{1}{2}(h^\beta)^\sharp$ and $\sigma_0$ is Killing spinor.

\begin{prop}\label{prop:inf-def-Sasaki}
Let $(M,g)$ be a spin Sasaki-Einstein manifold admitting the 2 defining Killing spinors $\sigma_j,\ j=0,1$.
If $\beta\in\mathcal{H}^1_{\mathcal{A}}$ then the corresponding basic anti-Hermitian symmetric tensor
$h^\beta$ is an infinitesimal Einstein deformation of $g$, and $(\alpha,0),\ \alpha =-\frac{1}{2}(h^\beta)^\sharp$ is
an infinitesimal deformation of the Killing spinors $\sigma_j$ for $j=0,1$.
\end{prop}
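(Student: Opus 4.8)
The first assertion requires essentially no new work. By Corollary~\ref{cor:comp-Einst-def} the class $\beta\in\mathcal{H}^1_{\mathcal{A}}$ corresponds to an anti-Hermitian $h^T\in\EED_A(g^T)$, and by Lemma~\ref{lem:Einst-def-sub} its pullback $h^\beta=\pi^* h^T$ lies in $\EED(g)\subseteq\ED(g)$; in particular $\tr_g h^\beta=\delta_g h^\beta=0$, so $\alpha=-\tfrac12(h^\beta)^\sharp$ is $g$-symmetric with $\tr_g\alpha=\delta\alpha=0$. For the second assertion I would invoke the definition of an infinitesimal deformation of a Killing spinor together with Proposition~\ref{prop:Kill-spin-inf}. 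Taking the spinor variation to be $\dot\sigma=0$, conditions (i) and (ii) of the definition are immediate (the zero spinor trivially solves \eqref{eq:K-S-int}, and $\tr_g\alpha=\delta\alpha=0$ was just noted). Hence $(\alpha,0)$ is an infinitesimal deformation of $\sigma_j$ exactly when condition (iii) holds, namely
\[ \mathcal{D}\Psi^{(\alpha,\sigma_j)}=nc\,\Psi^{(\alpha,\sigma_j)},\qquad n=\dim M,\ c=\pm\tfrac12, \]
and this is the only point that needs proof.

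Next I would reduce (iii) to a pointwise identity. Writing $\mathcal{D}\Psi(X)=\sum_i e_i\cdot(\nabla_{e_i}\Psi)(X)$, expanding $\Psi^{(\alpha,\sigma_j)}(X)=\alpha(X)\cdot\sigma_j$, and using the Killing equation $\nabla_{e_i}\sigma_j=c\,e_i\cdot\sigma_j$ together with the Clifford contraction $\sum_i e_i\cdot v\cdot e_i=(n-2)v$ for a $1$-form $v$, one obtains
\[ \mathcal{D}\Psi^{(\alpha,\sigma_j)}=\Theta^{(\alpha,\sigma_j)}+c(n-2)\,\Psi^{(\alpha,\sigma_j)}, \]
with $\Theta^{(\alpha,\sigma_j)}$ as in \eqref{eq:def-theta}. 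Therefore (iii) is equivalent to the single identity
\[ \Theta^{(\alpha,\sigma_j)}=2c\,\Psi^{(\alpha,\sigma_j)}, \]
i.e.\ the claim is that $h^\beta$ always falls into the first alternative of the eigenvector dichotomy of~\cite{Wan91} recalled above, where $\Theta^{(\alpha,\sigma_j)}-2c\Psi^{(\alpha,\sigma_j)}=0$.

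The substance of the proof is this identity, which I would verify in a unitary transverse frame $\{Z_a,Z_{\ol a},\xi\}$. Two structural facts drive the computation. First, since $h^\beta$ is anti-Hermitian of pure type $(2,0)+(0,2)$, the endomorphism $\alpha$ interchanges $D^{1,0}$ and $D^{0,1}$, annihilates $\xi$, and anticommutes with $\Phi$. Second, from the $\SU(m)$-decomposition \eqref{eq:spin-rep-su} the two defining Killing spinors are the extreme pure spinors: $\sigma_0$ is annihilated by Clifford multiplication by $D^{0,1}$ and $\sigma_1$ by $D^{1,0}$, while $\xi\cdot\sigma_j=\pm\sqrt{-1}\,\sigma_j$. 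I would then expand $\Theta^{(\alpha,\sigma_j)}(X)=\sum_i e_i\cdot(\nabla_{e_i}\alpha)(X)\cdot\sigma_j$ over the frame, rewriting each $\nabla_{e_i}\alpha$ as the transverse derivative $\nabla^T\alpha$ plus the Sasaki O'Neill torsion terms built from $\Phi$ and $\xi$ (the same terms entering the curvature identities in Lemma~\ref{lem:Einst-def-sub}), and using that $\alpha$ is basic so $\nabla_\xi\alpha$ is algebraic in $\Phi$. The purely transverse contributions are removed by combining pure-spinor annihilation and Clifford anticommutation with the harmonicity relations \eqref{eq:harm-ident1}–\eqref{eq:harm-ident2}: the contracted piece collapses to a multiple of $\sum_a g\bigl(Z_{\ol a},(\nabla^T_{Z_a}\alpha)(X)\bigr)$, which vanishes by the co-closed relation \eqref{eq:harm-ident2}, while the $\ol\partial_b$-closed relation \eqref{eq:harm-ident1} handles the symmetry of $\nabla^T\alpha$ in its transverse indices. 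What remains are the torsion and Reeb terms, and evaluating $\xi\cdot(\nabla_\xi\alpha)(X)\cdot\sigma_j$ through $\xi\cdot\sigma_j=\pm\sqrt{-1}\,\sigma_j$ should collapse everything to exactly $2c\,\alpha(X)\cdot\sigma_j$. The case $j=1$ is the complex conjugate of $j=0$, so it suffices to treat one spinor.

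The main obstacle is the final bookkeeping. The vanishing of the transverse part follows rather formally from harmonicity and the pure-spinor property; the delicate point is the precise tracking of the Reeb-direction and O'Neill torsion terms so that the leftover constant is \emph{exactly} $2c$ and no stray component survives. Fixing the sign and normalization of $\xi\cdot\sigma_j$ and checking that the torsion terms proportional to $\Phi X$ recombine correctly (using $\alpha\Phi=-\Phi\alpha$) — rather than producing a nonzero $\Theta^{(\alpha,\sigma_j)}-2c\Psi^{(\alpha,\sigma_j)}$ landing in the second alternative — is where the argument must be executed with care.
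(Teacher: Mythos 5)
Your proposal follows essentially the same route as the paper: reduce condition (iii) via Proposition~\ref{prop:Kill-spin-inf} and the Clifford contraction identity to the pointwise statement $\Theta^{(\alpha,\sigma_j)}=2c\,\Psi^{(\alpha,\sigma_j)}$ (the paper's equation (\ref{eq:Sasak-Kill-def1})), then verify it in a transverse unitary frame by splitting $\nabla\alpha$ into its transverse part plus the O'Neill/Reeb terms, killing the transverse part with the pure-spinor annihilation of $\sigma_0$, $\sigma_1$ together with the harmonic identities (\ref{eq:harm-ident1})--(\ref{eq:harm-ident2}), and letting the Reeb term produce the factor $2c$. The only cosmetic difference is that the paper writes out both $j=0$ (trivial by annihilation) and $j=1$ (needing both harmonic identities) rather than appealing to conjugation, but your sketch already contains all the ingredients for the harder case.
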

\begin{remark}
The definitions of $h^\beta,\,\Psi^{\beta,\sigma_0}$ and $\alpha$ are made to agree with the identifications made in
Corollary~\ref{cor:comp-Einst-def} and Section~\ref{subsubsec:spin-def}.
\end{remark}

\begin{proof}
That $h^\beta$ is an infinitesimal Einstein deformation follows from Lemma~\ref{lem:Einst-def-sub}.

In the proof we denote $(h^\beta)^\sharp$ by $h$ which can be considered to be a basic tensor with values in
$D=\ker\eta$ and $\Phi h=-h\Phi$.
By Proposition~\ref{prop:Kill-spin-inf} it is sufficient to prove
\begin{equation}\label{eq:Sasak-Kill-def1}
\sum_i e_i \cdot\bigl(\nabla_i h\bigr)(X)\sigma_j =2ch(X)\sigma_j ,\quad\text{for all } X\in TM,\ j=0,1.
\end{equation}
for a local orthonormal frame $\{e_1,\ldots, e_{2m-1}\}$ for which we may choose $e_i \in\Gamma\bigl(D\bigr)$ for
$i=1,\ldots,2m-2,\ e_{m-1+i} =\Phi e_i$ for $i=1,\ldots,m-1$ and $e_{2m-1} =\xi$.
We extend to an orthonormal frame on $C(M)$ by setting $e_{2m} =\partial_r$.

Define an Hermitian frame by $\varepsilon_\alpha =\frac{1}{\sqrt{2}}(e_\alpha -\sqrt{-1}Je_\alpha),\ \alpha=1,\ldots,m-1$ and
$\varepsilon_{m} =\frac{1}{\sqrt{2}}(e_{2m-1} -\sqrt{-1}Je_{2m-1})=\frac{1}{\sqrt{2}}(\xi +\sqrt{-1}\partial_r)$.
Denote their duals by $\varepsilon^\alpha =\frac{1}{\sqrt{2}}(e_\alpha +\sqrt{-1}Je_\alpha)$ and define $\varepsilon_{\ol{\alpha}} =\ol{\varepsilon}_\alpha$.  Note that $\varepsilon_{\ol{\alpha}} = \varepsilon^\alpha$.

Since $Hol(\ol{g})\subseteq\SU(m)$ the spinor bundle $\Sigma$ of $M$ can be identified, on the neighborhood of the frame, with
$\Lambda^{ev} \Span_{\C}\{\varepsilon_\alpha |\alpha=1,\ldots,m\} =\Lambda^{ev} T^{1,0}C(M)|_M$,
or $\Lambda^{odd} \Span_{\C}\{\varepsilon_\alpha |\alpha=1,\ldots,m\}$.
Clifford multiplication is given by $e_i \mapsto e_i e_{2m},\ 1\leq i\leq 2m-1$ (or $e_i \mapsto -e_i e_{2m}$ giving
the other Clifford module structure on $\Sigma$).

If m is even we take $\Sigma =\Lambda^{ev} \Span_{\C}\{\varepsilon_\alpha |\alpha=1,\ldots,m\}$.  If m is odd, then
we take $\Sigma =\Lambda^{odd} \Span_{\C}\{\varepsilon_\alpha |\alpha=1,\ldots,m\}$ when considering
$\sigma_1 \in\Gamma(\Sigma)$, and $\Sigma =\Lambda^{even} \Span_{\C}\{\varepsilon_\alpha |\alpha=1,\ldots,m\}$
when considering $\sigma_0 \in\Gamma(\Sigma)$.  In the latter case we take Clifford multiplication to act through
$e_i \mapsto -e_i e_{2m}$ in order to obtain the same Clifford module structure on $\Sigma$ (in this case $c=-\frac{1}{2}$).

The Killing spinors are locally $\sigma_0 =a(x)\in\Gamma\bigl(\Lambda^0 \bigr)$ and $\sigma_1 =b(x)\varepsilon_1 \wedge\cdots\wedge\varepsilon_{m} \in\Gamma\bigl(\Lambda^{m}\bigr)$, where $a,b$ are smooth functions.

Note that for $X,Y\in\Gamma\bigl(D\bigr)$ basic
\begin{equation}
\begin{split}
\nabla_Y h(X) & =\nabla_Y^T h(X) +g(\nabla_Y (hX),\xi)\xi \\
				& =\nabla_Y^T h(X) -g(h(X),\Phi Y)\xi. \\
\end{split}
\end{equation}
Thus
\begin{equation}\label{eq:Sasak-Kill-def2}
\begin{split}
\sum_{i=1}^{2m-1} e_i\bigl(\nabla_i h\bigr)(X)\sigma_j & = \sum_{i=1}^{2m-2} e_i\bigl(\nabla^T_i h\bigr)(X)\sigma_j +\xi\bigl(\nabla_\xi h\bigr)(X)\sigma_j +\sum_i e_i g(\Phi h(X),e_i)\xi\sigma_j \\
											& = \sum_{i=1}^{2m-2} e_i \bigl(\nabla^T_i h\bigr)(X)\sigma_j +2\xi\Phi h(X)\sigma_j+\Phi h(X)\xi\sigma_j \\
											& = \sum_{i=1}^{2m-2} e_i \bigl(\nabla^T_i h\bigr)(X)\sigma_j -\Phi h(X)\xi\sigma_j. \\
\end{split}
\end{equation}
We will show that the first term on the right of (\ref{eq:Sasak-Kill-def2}) vanishes.  First suppose $X=\varepsilon_\gamma$, then
\begin{equation}\label{eq:Sasak-Kill-def3}
\begin{split}
 \sum_{i=1}^{2m-2} e_i \bigl(\nabla^T_i h\bigr)(X)\sigma_j & =\sum_{\alpha=1}^{m-1} \varepsilon^\alpha \bigl(\nabla^T_{\varepsilon_\alpha} h\bigr)(\varepsilon_\gamma)\sigma_j +
 \sum_{\alpha=1}^{m-1} \varepsilon^{\ol{\alpha}} \bigl(\nabla^T_{\varepsilon_{\ol{\alpha}}} h\bigr)(\varepsilon_\gamma)\sigma_j \\
 				& = \varepsilon^\alpha \nabla^T_\alpha {h_{\gamma}}^{\ol{\beta}}\varepsilon_{\ol{\beta}} \sigma_j +
 				 \varepsilon^{\ol{\alpha}} \nabla^T_{\ol{\alpha}} {h_{\gamma}}^{\ol{\beta}}\varepsilon_{\ol{\beta}} \sigma_j .\\
\end{split}
\end{equation}
If $j=0$, then this vanishes since $\varepsilon_{\ol{\beta}}\sigma_0=0$.  Suppose $j=1$, then the first term on the right of (\ref{eq:Sasak-Kill-def3}) is
\begin{equation}
\begin{split}
\varepsilon^\alpha \nabla^T_\alpha {h_{\gamma}}^{\ol{\beta}}\varepsilon_{\ol{\beta}} \sigma_1 & =\nabla^T_\alpha h_{\beta\gamma}\varepsilon^\alpha\varepsilon^\beta \sigma_1 \\
					& = \sum_{\alpha<\beta}\bigl(\nabla^T_\alpha h_{\beta\gamma} -\nabla^T_\beta h_{\alpha\gamma}\bigr)\varepsilon^\alpha\varepsilon^\beta \sigma_1=0, \\
\end{split}
\end{equation}
because of (\ref{eq:harm-ident1}).
And the second term on the right of (\ref{eq:Sasak-Kill-def3}) is
\begin{equation}
\begin{split}
\varepsilon^{\ol{\alpha}} \nabla^T_{\ol{\alpha}} {h_{\gamma}}^{\ol{\beta}}\varepsilon_{\ol{\beta}} \sigma_1 & =\nabla^T_{\ol{\alpha}} h_{\beta\gamma}\varepsilon^{\ol{\alpha}}\varepsilon^\beta \sigma_1 \\
					& = \nabla^T_{\ol{\alpha}} h_{\beta\gamma}\bigl(-\varepsilon^\beta\varepsilon^{\ol{\alpha}}-2g(\varepsilon^{\ol{\alpha}},\varepsilon^\beta)\bigr) \sigma_1 \\
					& = -2\bigl(\nabla^T\bigr)^\alpha h_{\alpha\gamma} \sigma_1 =0, \\				
\end{split}
\end{equation}
because of (\ref{eq:harm-ident2}).
The case of $X=\varepsilon_{\ol{\gamma}}$ is completely analogous.

We have
\begin{equation}\label{eq:Sasak-kill-def6}
\begin{split}
\sum_{i=1}^{2m-1} e_i\bigl(\nabla_i h\bigr)(\xi)\sigma_j & =-\sum_{i=1}^{2m-2} e_i h(\Phi e_i)\sigma_j \\
														 & =-\sum_{i,k=1}^{2m-2} e_i h(\Phi e_i,e_k)e_k \sigma_j \\
														 & =\sum_{i=1}^{2m-2} h(\Phi e_i,e_i)\sigma_j =0,
\end{split}
\end{equation}
for $j=0,1$.  The last two equalities follow because $h(\Phi\cdot,\cdot)$ is symmetric and anti-Hermitian.

We have that
\begin{equation}\label{eq:Sasak-kill-def4}
\sum_{i=1}^{2m-1} e_i\bigl(\nabla_i h\bigr)(X)\sigma_j = -\Phi h(X)\xi\sigma_j, \quad\text{for } X\in TM.
\end{equation}
Recall that Clifford multiplication is $X\cdot\sigma_j =X\partial_r \sigma_j$, for $X\in TM$ with our representation space, unless $\sigma_j $ has $c=-\frac{1}{2}$ in which case we must take $X\cdot\sigma_j =-X\partial_r \sigma_j$.
It is easy to check that
\begin{equation}\label{eq:Sasak-kill-def5}
-\Phi h(X)\xi\sigma_j =h(X)\partial_r \sigma_j, \quad j=1,2.
\end{equation}

Then (\ref{eq:Sasak-Kill-def1}) follows from (\ref{eq:Sasak-kill-def4}) and (\ref{eq:Sasak-kill-def5}) and the Proposition
follows.
\end{proof}

\subsection{Infinitesimal deformations on 3-Sasakian manifolds}

Recall the important result of H. Pedersen and Y. S. Poon that 3-Sasakian structures are rigid.
\begin{thm}[\cite{PedPo99}]\label{thm:PedPo}
Let $(M,g),\ \dim M=4m-1,$ be a 3-Sasakian manifold with Killing spinors $\sigma_i,\ i=0,\ldots, m$.  Then any Einstein deformation $(M,g_t)$ of $g$ with compatible 3-Sasakian structures, i.e. preserving the existence of the $\sigma_i,\ i=0,\ldots, m$, is trivial.  That is, there exists a family $\phi_t$ of diffeomorphisms of $M$ with $\phi_t^*g_t =g$.
\end{thm}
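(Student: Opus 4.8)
The plan is to reduce the statement to an infinitesimal rigidity statement and then to a vanishing theorem on the twistor space $Z$. First I would differentiate the family at $t=0$ to produce $h=\dot{g}_0\in\ED(g)$, and recall that by Ebin's slice theorem together with the real-analyticity of the premoduli space (Theorem~\ref{thm:Koi-premod}) it suffices to show that no nonzero essential infinitesimal Einstein deformation of $g$ is compatible with the $3$-Sasakian constraint; the slice-component of the family is then the constant $g$ (applying the vanishing at each $g_t$), and the diffeomorphisms $\phi_t$ are recovered from the cross-section $\chi$. The crucial input is the meaning of ``preserving all the $\sigma_i$'': under the holonomy reduction $\Sp(m)\subset\SO(4m)$ of the cone the $m+1$ Killing spinors span the $\Sp(m)$-trivial summand $\Lambda_0\hat{\otimes}\gamma_m$ in (\ref{eq:spin-rep-quat}), which is the irreducible $\Sp(1)$-module $\gamma_m=S^m(\mu_2)$. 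Hence preserving all of them is equivalent to preserving the full $\Sp(1)$ generated by $\xi_1,\xi_2,\xi_3$, i.e.\ to the deformation remaining hyperk\"{a}hler on $C(M)$ with its cone structure.

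Next I would pass to the transverse geometry of a single Reeb field. Choosing $\xi_1$ makes $(M,g)$ Sasaki-Einstein, so by Corollary~\ref{cor:comp-Einst-def} and Proposition~\ref{prop:inf-def-Sasaki} the infinitesimal Einstein deformations preserving the two $\xi_1$-Killing spinors are precisely the anti-Hermitian ones, parametrized by $\mathcal{H}^1_{\mathcal{A}}\cong H^1(\mathcal{A}^\bullet)$, the first-order deformations of the transverse complex structure of $\mathscr{F}_{\xi_1}$. Geometrically these are deformations of the Fano K\"{a}hler-Einstein twistor orbifold $Z$, which carries the holomorphic contact form $\theta$ with contact line bundle $\mathbf{L}\cong\mathbf{K}_Z^{-1/m}$. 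The $3$-Sasakian requirement of preserving \emph{all} $m+1$ spinors cuts $H^1(\mathcal{A}^\bullet)$ down to the subspace of deformations of $Z$ that respect the complex contact structure, and it is this contact-deformation space that must be shown to vanish.

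The crux, and the step I expect to be the main obstacle, is this vanishing. Here I would proceed as in the proof of Proposition~\ref{prop:vers-def}: infinitesimal deformations of a complex contact structure are governed by the first cohomology of a complex twisted by the contact bundle $\mathbf{L}$, and the positivity of $\mathbf{L}$ (equivalently $\Ric^T>0$, i.e.\ $Z$ Fano) should force this cohomology to vanish by transverse Kodaira-Nakano vanishing, using the transverse harmonic theory and the Weitzenb\"{o}ck formula (\ref{eq:Weit-Ric}). The delicate points are (i) identifying the contact-deformation complex correctly, so that the twisting bundle falls in the negativity range where Kodaira-Nakano applies, and (ii) carrying the argument through in the transversely K\"{a}hler orbifold category rather than on a smooth projective manifold. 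Granting the vanishing, $h\in\TED(g)$ and, by the reduction of the first paragraph, the whole family is trivial.
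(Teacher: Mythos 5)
Your outline matches the route the paper itself indicates: this theorem is quoted from Pedersen--Poon rather than proved in the paper, and the only argument the paper supplies is precisely your crux, namely that rigidity reduces to the vanishing of the contact-deformation cohomology $H^1(Z,\mathcal{O}(\mathbf{L}))$ of the twistor space, which holds by Kodaira vanishing since $\mathbf{L}\cong\mathbf{K}_Z^{-1/m}$ makes $\mathbf{K}_Z^{-1}\otimes\mathbf{L}$ positive (note the paper uses the positivity form $H^1(Z,\mathbf{K}_Z\otimes E)=0$ for $E>0$, not a negativity-range Nakano statement). So your proposal is consistent with the paper's (essentially omitted) proof, and the identification of the contact-deformation sheaf as $\mathcal{O}(\mathbf{L})$ is exactly the delicate point you flagged.
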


The transversal space $\mathscr{F}_\xi$, for any fixed Reeb vector field $\xi\in\SR$, is an orbifold $Z$ with a complex contact
structure.  Recall that the twistor spaces for any
two $\xi\in\SR$ are isomorphic via the transitive action of $\Sp(1)$ on the $\SR$ of Reeb vector fields.
We denote by $\mathcal{H}^1_{\mathcal{A}}(\xi)$ the harmonic space of the particular $\xi\in\SR$.  Although, the
$\mathcal{H}^1_{\mathcal{A}}(\xi),\ \xi\in\SR,$ are isomorphic they give different deformations
$h^\beta\in\EED(g),\ \beta\in \mathcal{H}^1_{\mathcal{A}}(\xi)$.

The proof of Theorem~\ref{thm:PedPo}, and the earlier similar result~\cite{LeB88} of C. LeBrun, follow mainly from the vanishing of $H^1(Z,\mathcal{O}(\mathbf{L}))$. We have
\[ H^1(Z,\mathcal{O}(\mathbf{L})) =H^1(Z,\Omega^{2m-1}(\mathbf{K}_Z^{-1}\otimes\mathbf{L}))=\{0\}\]
by Kodaira vanishing, since $\mathbf{K}_Z^{-1}\otimes\mathbf{L} >0$.

The following provides a spinor version of this vanishing result.
\begin{prop}\label{prop:inf-def-3Sasaki}
Let $(M,g),\ \dim M=4m-1,$ be a 3-Sasakian manifold with Killing spinors $\sigma_j,\ j=0,\ldots, m$.
If $\beta\in\mathcal{H}^1_{\mathcal{A}}(\xi)$ is nonzero, then the corresponding basic anti-Hermitian symmetric tensor
$h^\beta$ is an infinitesimal Einstein deformation of $g$, and $(\alpha,0),\ \alpha =-\frac{1}{2}(h^\beta)^\sharp$ is
an infinitesimal deformation of the Killing spinors $\sigma_j$ for $j=0,m$, but never for any nonzero
$\sigma\in\Span_{\C}\{\sigma_j | j=1,\ldots,m-1 \}$.
\end{prop}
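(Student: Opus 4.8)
The plan is to prove this as the 3-Sasakian refinement of Proposition~\ref{prop:inf-def-Sasaki}, exploiting the extra structure of the hyperk\"ahler cone. Fixing the Reeb field $\xi=\xi_1$ realizes $(M,g)$ as Sasaki-Einstein with transversal complex structure $\ol{J}=\Phi_1$, and $(C(M),\ol{g})$ then carries the parallel holomorphic symplectic $(2,0)$-form $\Omega=\omega_2+\sqrt{-1}\,\omega_3$. First I would identify the $m+1$ Killing spinors in this frame: transported to the cone they are the parallel spinors $\Omega^j$, $j=0,\ldots,m$, so under the identification $\Sigma\cong\Lambda^{*,0}$ each $\sigma_j$ is (the restriction of) $\Omega^j$, lying in $\Lambda^{2j}$. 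In particular $\sigma_0$ is the scalar and $\sigma_m$ the top form, while $\sigma_1,\ldots,\sigma_{m-1}$ are the intermediate powers.

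For $j=0$ and $j=m$ these are precisely the two defining Killing spinors of the Sasaki-Einstein structure $(g,\eta_1,\xi_1,\Phi_1)$, and $\beta\in\mathcal{H}^1_{\mathcal{A}}(\xi)$ is a class for that very structure; hence Proposition~\ref{prop:inf-def-Sasaki} applies directly and $(\alpha,0)$ is an infinitesimal deformation of $\sigma_0$ and $\sigma_m$. Concretely, in the reduction of that proof (now with cone complex dimension $2m$) the transversal sum $\sum_i e_i(\nabla^T_i h)(X)\sigma_j$ vanishes because contraction annihilates the scalar $\sigma_0$ and wedging annihilates the top form $\sigma_m$, the leftover pieces dying by the harmonic identities (\ref{eq:harm-ident1}) and (\ref{eq:harm-ident2}).

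For intermediate $j$ I would run the same reduction via Proposition~\ref{prop:Kill-spin-inf}, so that $(\alpha,0)$ deforms $\sigma_j$ iff the transversal sum vanishes. The antisymmetrized double-contraction term and the trace/divergence terms still vanish by (\ref{eq:harm-ident1}), (\ref{eq:harm-ident2}), but the mixed wedge--contraction term no longer dies: isolating it gives the obstruction $S_j(\varepsilon_\gamma)=j\,\Omega^{j-1}\wedge F_\gamma$, where $F_\gamma=\sum_{\alpha,\beta}\nabla^T_{\ol{\alpha}}h_{\beta\gamma}\,\varepsilon_\alpha\wedge(\varepsilon^\beta\contr\Omega)\in\Lambda^2 V^{1,0}$ is manifestly \emph{independent} of $j$. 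Since $\Omega$ is a nondegenerate $(2,0)$-form, hard Lefschetz for the symplectic form $\Omega$ on $\Lambda^\bullet V^{1,0}$ (complex dimension $2m$) makes $L^{j-1}=\Omega^{j-1}\wedge\colon\Lambda^2\to\Lambda^{2j}$ injective exactly when $j\le m-1$. Thus for \emph{every} intermediate $j$ the obstruction vanishes if and only if $F_\gamma=0$, the same condition, so $(\alpha,0)$ deforms either all of $\sigma_1,\ldots,\sigma_{m-1}$ or none of them.

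To break this dichotomy I would invoke rigidity. If $(\alpha,0)$ deformed every intermediate spinor it would deform all $m+1$ Killing spinors simultaneously; preserving the full $(m+1)$-dimensional Killing spinor space is exactly the infinitesimal condition of remaining 3-Sasakian, and such deformations are rigid by Theorem~\ref{thm:PedPo} (equivalently by the vanishing $H^1(Z,\mathcal{O}(\mathbf{L}))=0$ underlying it), forcing $h^\beta\in\TED(g)$. But $h^\beta$ is essential by Lemma~\ref{lem:Einst-def-sub} and Corollary~\ref{cor:comp-Einst-def}, and nonzero because $\beta\neq 0$, a contradiction. Hence $F_\gamma\neq 0$ and $(\alpha,0)$ deforms none of $\sigma_1,\ldots,\sigma_{m-1}$. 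The hard part will be the Clifford bookkeeping that cleanly factors the obstruction as $\Omega^{j-1}\wedge F_\gamma$ with $F_\gamma$ independent of $j$, together with making precise the infinitesimal form of the Pedersen--Poon rigidity invoked in the final step.
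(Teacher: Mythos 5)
Your identification of the Killing spinors with the powers $\varpi^{j}$ of the transversal holomorphic symplectic form, the disposal of $j=0,m$ via Proposition~\ref{prop:inf-def-Sasaki}, and the observation that the obstruction for intermediate $j$ has the shape $\varpi^{j-1}\wedge(\text{a fixed $2$-form})$ --- so that hard Lefschetz makes its vanishing a condition independent of $j\in\{1,\dots,m-1\}$ --- all agree in substance with the paper's computation. The gap is in how you break the resulting all-or-nothing dichotomy. You appeal to Theorem~\ref{thm:PedPo}, but that theorem concerns genuine one-parameter families of metrics carrying compatible 3-Sasakian structures; an infinitesimal deformation of Killing spinors in the sense of Section~\ref{subsubsec:spin-def} is only a first-order condition and does not produce such a family, so the theorem does not apply, and your ``exactly the infinitesimal condition of remaining 3-Sasakian'' is unjustified. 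What you actually need is an \emph{infinitesimal} rigidity statement --- an essential infinitesimal Einstein deformation preserving all $m+1$ Killing spinors to first order must vanish --- and that statement is essentially a consequence of the very proposition you are proving; invoking it makes the argument close to circular, and you yourself flag this step as unresolved.

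The paper avoids this by proving the non-vanishing of the obstruction directly. It uses the vanishing $H^1(Z,\mathcal{O}(\mathbf{L}))=0$ not as a rigidity black box but concretely: from $h$ and the complex contact form $\theta$ it builds $\psi\in\Omega^{0,1}(\mathbf{L})$ with $\psi_{\ol{\beta}}=h_{\ol{\beta}}^{\gamma}\theta_{\gamma}$, checks $\ol{\partial}\psi=0$ and $\ol{\partial}^{*}\psi=0$ using (\ref{eq:harm-ident1})--(\ref{eq:harm-ident2}) and the symmetry of $h$, concludes $\psi=0$, and hence that $h(X)\in\mathcal{D}=D_1\cap D_2\cap D_3$ for all $X$. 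With that reduction it isolates the components of the obstruction containing $\varepsilon_{2m}$ but not $\varepsilon_{2m-1}$ (and symmetrically), and finds them equal to a non-zero multiple of $\varepsilon_{2m}\wedge\Phi_2 h(\varepsilon_{\beta})\wedge\vartheta^{k-1}$, which for $1\leq k\leq m-1$ is non-zero whenever $h(\varepsilon_{\beta})\neq 0$ because $\vartheta$ is a complex symplectic form on $\mathcal{D}$. That computation is the content your proposal is missing: it is exactly the proof of the infinitesimal rigidity you want to quote, so you cannot shortcut it by citing Pedersen--Poon.
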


It will be convenient to introduce some notation.  Given $\sigma\in\mathcal{N}_g$ we change notation and write the formula in
Proposition~\ref{prop:Kill-spin-inf} as
\begin{equation}\label{eq:Kill-spin-inf}
 \mathcal{L}(\alpha,\sigma)(X) =-\frac{1}{2} \sum_i e_i \cdot\bigl(\nabla_i \alpha\bigr)(X)\sigma + \frac{1}{2}\alpha(X)\sigma ,\quad\text{for all } X\in TM.
\end{equation}
Then the proposition asserts that $\mathcal{L}(\alpha,\sigma)=0$ for $\sigma=\sigma_j \ j=0,m$ and
$\mathcal{L}(\alpha,\sigma)\neq 0$ for nonzero $\sigma\in\Span_{\C}\{\sigma_j | j=1,\ldots,m-1 \}$.

\begin{proof}
We consider a local orthonormal frame which is in the $\Sp(m)$-structure of $C(M)$
\[ (e_1, e_2, \ldots ,e_{4m}) =(f_1, J_1 f_1,J_2 f_1,J_3 f_1; f_2,J_1 f_2 \ldots; f_m, J_1 f_m, J_2 f_m, J_3 f_m), \]
where $e_1,\ldots, e_{4m-4} \in\cap_{i=1,2,3} D_i =\mathcal{D},\ f_m =-\xi_3, J_1 f_m =\xi_2, J_2 f_m =-\xi_1$ and $J_3 f_m =\partial_r$.

We define an Hermitian frame by $\varepsilon_\alpha =\frac{1}{\sqrt{2}}(e_{2\alpha-1}-\sqrt{-1}J_1 e_{2\alpha-1})=
\frac{1}{\sqrt{2}}(e_{2\alpha-1}-\sqrt{-1} e_{2\alpha}),$ $\alpha=1,\ldots, 2m$, and
their duals $\varepsilon^{\alpha}=\varepsilon_{\ol{\alpha}}=\ol{\varepsilon}_\alpha$.
In particular, we have
$\varepsilon_{2m-1} =\frac{1}{\sqrt{2}}(-\xi_3 -\sqrt{-1}\xi_2)$ and $\varepsilon_{2m}=\frac{1}{\sqrt{2}}(-\xi_1 -\sqrt{-1}\partial_r)$.
As in the proof of Proposition~\ref{prop:inf-def-Sasaki} the spinor bundle of $(M,g)$ can be identified with
$\Sigma =\Lambda^{ev} T^{1,0} C(M)|_M = \Lambda^{ev} \Span_{\C}\{\varepsilon_\alpha |\alpha=1,\ldots,2m\}$.

Define the ``symplectic form''
\begin{equation}\label{eq:sym-form}
\varpi =\sum_{\alpha=1}^m \varepsilon_{2\alpha -1}\wedge\varepsilon_{2\alpha}.
\end{equation}
The Killing spinors on $(M,g)$ can be identified with
\[ \sigma_k =\frac{1}{k!}\varpi^k\in\Gamma\bigl( \Lambda^{ev} T^{1,0} C(M)\bigr),\ k=0,\ldots,m. \]
From the proof of Proposition~\ref{prop:inf-def-Sasaki} a Killing spinor $\sigma_k$ is preserved to first order by the Einstein deformation $h$ if and only if
\begin{equation}\label{eq:3Sasak-def1}
\sum_{\alpha=1}^{2m-1} \varepsilon^\alpha \nabla_\alpha^T h(X) \sigma_k +\sum_{\alpha=1}^{2m-1}\varepsilon^{\ol{\alpha}}\nabla_{\ol{\alpha}}^T h(X)\sigma_k +
\xi_1 \Phi_1 h(X)\sigma_k =2ch(X)\sigma_k,
\end{equation}
holds for all $X\in D_1$.  Here $c=\frac{1}{2}$.

Define $\psi\in\Omega^{0,1}(\mathbf{L})$ by $\psi_{\ol{\beta}} =h_{\ol{\beta}}^\gamma \theta_\gamma$.  Since $\theta$ is holomorphic
$\ol{\partial}\psi =0$.  The line bundle $\mathbf{L}$ has a natural hermitian metric by the identification $\mathbf{L}=\mathbf{K}_Z^{-\frac{1}{m}}$,
so there is a natural connection on $\mathbf{L}$.  Then
\begin{equation}
\begin{split}
\ol{\partial}^* \psi & =-\nabla^{T\ol{\beta}}\psi_{\ol{\beta}} \\
					 & =-{h_{\ol{\beta}}}^\gamma \nabla^{\ol{\beta}}\theta_{\gamma} =0,\\
\end{split}
\end{equation}
where the second equality holds from $\nabla^{T\ol{\beta}}{h_{\ol{\beta}}}^\gamma =0$.  For the third equality observe that
$\nabla_{\beta} \theta_\gamma$ lifts to the form $d\eta^c= g(\Phi_2 \cdot,\cdot)+\sqrt{-1}g(\Phi_3 \cdot,\cdot)$ restricted to $D_1$,
but $h^{\beta\gamma}$ is symmetric and so the contraction is zero.

Therefore $\psi\in\Omega^{0,1}(\mathbf{L})$ is harmonic.  But as we observed, $H^1(Z,\mathcal{O}(\mathbf{L}))=0$, so $\psi=0$.
It follows that $h(X)\in\mathcal{D}$ for all $X\in TM$.  This fact will be used repeatedly in the rest of the proof.

Substituting $\xi_1 =\frac{-1}{\sqrt{2}}(\varepsilon_{2m} +\varepsilon_{\ol{2m}})$ and $\partial_r=\frac{\sqrt{-1}}{\sqrt{2}}(\varepsilon_{2m} -\varepsilon_{\ol{2m}})$
into (\ref{eq:3Sasak-def1}) and canceling terms gives
\begin{equation}\label{eq:3Sasak-def2}
\sum_{\alpha=1}^{2m-1} \varepsilon^\alpha \nabla_\alpha^T h(X) \sigma_k +\sum_{\alpha=1}^{2m-1}\varepsilon^{\ol{\alpha}}\nabla_{\ol{\alpha}}^T h(X)\sigma_k -
\sqrt{-1}\sqrt{2}h(X)^{0,1} \varepsilon_{2m} +\sqrt{-1}\sqrt{2}h(X)^{1,0}\varepsilon_{\ol{2m}}=0.
\end{equation}

We saw in the proof of Proposition~\ref{prop:inf-def-Sasaki} that
\[\varepsilon^\alpha \nabla^T_\alpha h_{\beta\gamma}\varepsilon^\gamma =\varepsilon^{\ol{\alpha}} \nabla^T_{\ol{\alpha}} h_{\ol{\beta}\ol{\gamma}}\varepsilon^{\ol{\gamma}} =0,\]
so (\ref{eq:3Sasak-def2}) becomes
\begin{align}
\sum_{\alpha=1}^{2m-1}\sum_{\gamma=1}^{2m-1} \nabla_{\ol{\alpha}}^T h_{\beta\gamma}\varepsilon^{\ol{\alpha}}\varepsilon^{\gamma}\sigma_k & -\sqrt{-1}\sqrt{2}h(\varepsilon_\beta)\varepsilon_{2m} \sigma_k =0, \text{ for } X=\varepsilon_\beta, \label{eq:3Sasak-def3a} \\
\sum_{\alpha=1}^{2m-1}\sum_{\gamma=1}^{2m-1} \nabla_{\alpha}^T h_{\ol{\beta}\ol{\gamma}}\varepsilon^{\alpha}\varepsilon^{\ol{\gamma}}\sigma_k & +\sqrt{-1}\sqrt{2}h(\varepsilon_{\ol{\beta}})\varepsilon_{\ol{2m}} \sigma_k =0, \text{ for } X=\varepsilon_{\ol{\beta}}. \label{eq:3Sasak-def3b}
\end{align}

Define $\vartheta =\sum_{\alpha=1}^{m-1} \varepsilon_{2\alpha -1} \wedge\varepsilon_{2\alpha}$, then we have
\begin{equation}
\sigma_k =\frac{1}{k!} \vartheta^k +\frac{1}{(k-1)!}\vartheta^{k-1} \wedge\varepsilon_{2m-1}\wedge\varepsilon_{2m}.
\end{equation}

The second term of (\ref{eq:3Sasak-def3a}) is
\begin{equation}\label{eq:3Sasak-def4}
\begin{split}
-\sqrt{-1}\sqrt{2}h(\varepsilon_\beta)\varepsilon_{2m} \sigma_k & =-\frac{\sqrt{-1}2\sqrt{2}}{k!}\varepsilon_{2m}\wedge\bigl(h(\varepsilon_{\beta})\contr\vartheta^k \bigr) \\
 & = -\frac{\sqrt{-1}2\sqrt{2}}{(k-1)!}\varepsilon_{2m}\wedge\bigl(h(\varepsilon_{\beta})\contr\vartheta \bigr)\wedge\vartheta^{k-1} \\
 & = -\frac{\sqrt{-1}2\sqrt{2}}{(k-1)!}\varepsilon_{2m}\wedge \Phi_2 h(\varepsilon_{\beta})\wedge\vartheta^{k-1}.
\end{split}
\end{equation}
Note that every term of (\ref{eq:3Sasak-def4}) contains $\varepsilon_{2m}$ but does not contain $\varepsilon_{2m-1}$.
The terms of the first component of (\ref{eq:3Sasak-def3a}) which also contain $\varepsilon_{2m}$ but not
$\varepsilon_{2m-1}$ are
\begin{equation}\label{eq:3Sasak-def5}
\sum_{\alpha=1}^{2m-1}\nabla_{\ol{\alpha}}^T h_{\beta 2m-1}\varepsilon^{\ol{\alpha}}\varepsilon^{2m-1}\sigma_k.
\end{equation}

We simplify (\ref{eq:3Sasak-def5}) to get
\begin{equation}\label{eq:3Sasak-def6}
\begin{split}
\sum_{\alpha=1}^{2m-1}\nabla_{\ol{\alpha}}^T h_{\beta 2m-1}\varepsilon^{\ol{\alpha}}\varepsilon^{2m-1}\sigma_k & =
\sum_{\alpha=1}^{2m-1} -h(\varepsilon_{\beta}, \nabla^T_{\epsilon_{\ol{\alpha}}}\epsilon_{2m-1})\varepsilon^{\ol{\alpha}}\varepsilon^{2m-1}\sigma_k \\
& = \sqrt{-1}\sqrt{2} \sum_{\alpha=1}^{2m-1}h(\varepsilon_{\beta},\Phi_2 \epsilon_{\ol{\alpha}})\varepsilon^{\ol{\alpha}}\varepsilon^{2m-1}\sigma_k \\
& =-\sqrt{-1}\sqrt{2}\Phi_2 h(\varepsilon_{\beta})\varepsilon^{2m-1}\sigma_k \\
& =\frac{\sqrt{-1}2\sqrt{2}}{(k-1)!}\Phi_2 h(\varepsilon_{\beta})\wedge \vartheta^{k-1}\wedge\varepsilon_{2m}.
\end{split}
\end{equation}
Together the terms of (\ref{eq:3Sasak-def3a}) which contain $\varepsilon_{2m}$ but not
$\varepsilon_{2m-1}$ are
\begin{equation}\label{eq:3Sasak-def7}
-\frac{\sqrt{-1}4\sqrt{2}}{(k-1)!}\varepsilon_{2m}\wedge \Phi_2 h(\varepsilon_{\beta})\wedge\vartheta^{k-1}.
\end{equation}
We claim that (\ref{eq:3Sasak-def7}) is non-zero for $1\leq k\leq m-1$ when $\Phi_2 h(\varepsilon_{\beta})$ is non-zero.
But this follows because $\vartheta$ is a complex symplectic form on $\mathcal{D}$.  Thus $h(\varepsilon_{\beta})=0$.

A similar argument will be carried out with (\ref{eq:3Sasak-def3b}).
The second term of (\ref{eq:3Sasak-def3b}) is
\begin{equation}\label{eq:3Sasak-def8}
\begin{split}
\sqrt{-1}\sqrt{2} h(\varepsilon_{\ol{\beta}})\varepsilon_{\ol{2m}} \sigma_k & = \frac{\sqrt{-1}2}{(k-1)!}h(\varepsilon_{\ol{\beta}})\bigl(\vartheta^{k-1}\wedge\varepsilon_{2m-1}\bigr) \\
 & = \frac{\sqrt{-1}2\sqrt{2}}{(k-1)!}h(\varepsilon_{\ol{\beta}})\wedge\vartheta^{k-1}\wedge\varepsilon_{2m-1}.
\end{split}
\end{equation}
The terms of the first component of (\ref{eq:3Sasak-def3b}) which contain $\varepsilon_{2m-1}$ but not $\varepsilon_{2m}$
are
\begin{equation}
\sum_{\alpha=1}^{2m-1} \nabla_{\alpha}^T h_{\ol{\beta}\ol{2m-1}}\varepsilon^{\alpha}\varepsilon^{\ol{2m-1}}\sigma_k.
\end{equation}
We compute
\begin{equation}\label{eq:3Sasak-def9}
\begin{split}
\sum_{\alpha=1}^{2m-1} \nabla_{\alpha}^T h_{\ol{\beta}\ol{2m-1}}\varepsilon^{\alpha}\varepsilon^{\ol{2m-1}}\sigma_k & =
\sum_{\alpha=1}^{2m-1} -h(\varepsilon_{\ol{\beta}},\nabla_{\alpha}^T \varepsilon_{\ol{2m-1}})\varepsilon^{\alpha}\varepsilon^{\ol{2m-1}}\sigma_k \\
 & = -\sqrt{-1}\sqrt{2}\sum_{\alpha=1}^{2m-1}g(h(\varepsilon_{\ol{\beta}}),\Phi_2 \varepsilon_{\alpha})\varepsilon^{\alpha}\varepsilon^{\ol{2m-1}}\sigma_k \\
 & =\sqrt{-1}\sqrt{2}\Phi_2 h(\varepsilon_{\ol{\beta}})\varepsilon_{2m-1} \sigma_k \\
 & =\frac{\sqrt{-1}2\sqrt{2}}{k!}\varepsilon_{2m-1}\wedge\bigl(\Phi_2 h(\varepsilon_{\ol{\beta}})\contr\vartheta^k \bigr) \\
 & =\frac{\sqrt{-1}2\sqrt{2}}{(k-1)!}\varepsilon_{2m-1}\wedge\bigl(\Phi_2 h(\varepsilon_{\ol{\beta}})\contr\vartheta \bigr)\wedge\vartheta^{k-1} \\
 & =-\frac{\sqrt{-1}2\sqrt{2}}{(k-1)!}\varepsilon_{2m-1}\wedge h(\varepsilon_{\ol{\beta}})\wedge\vartheta^{k-1}
\end{split}
\end{equation}
Combining (\ref{eq:3Sasak-def8}) and (\ref{eq:3Sasak-def9}) give
\begin{equation}\label{eq:3Sasak-def10}
-\frac{\sqrt{-1}4\sqrt{2}}{(k-1)!}\varepsilon_{2m-1}\wedge h(\varepsilon_{\ol{\beta}})\wedge\vartheta^{k-1}.
\end{equation}

We have for $X\in\Gamma(D^{1,0})$ that the component of $\mathcal{L}(\alpha,\sigma_k)(X)$ containing $\varepsilon_{2m}$
but not $\varepsilon_{2m-1}$ is $-\frac{1}{2}$ of (\ref{eq:3Sasak-def7}).  Since these terms are linearly independent,
for $\sigma=\sum_{k=1}^{m-1}a_k \sigma_k$, $\mathcal{L}(\alpha,\sigma_k)(X)=0$ for all $X$ implies $h=0$.
\end{proof}

The proof involved determining the component of (\ref{eq:Kill-spin-inf}) with the spinor component containing precisely
one vector in $\Span_{\C}\{\varepsilon_{2m-1},\varepsilon_{2m}\}$.  This is given in (\ref{eq:3Sasak-def7}) and
(\ref{eq:3Sasak-def10}).  This component is preserved under changes of the frame used in the calculation.  This will be used
later in Section~\ref{subsec:3Sasak-def} where more details will be given.  It will be useful that this component is
\begin{equation}\label{eq:def-comp}
-\Phi_1 h(X)\xi_1 \cdot\sigma -h(X)\partial_r \cdot\sigma.
\end{equation}

\section{Integrable deformations of Killing spinors}\label{sec:int-def}

We consider the integrability of the infinitesimal Einstein deformations $h^\beta \in\EED(g)$ for $\beta\in\mathcal{H}^1_{\mathcal{A}}$
from the last section.  We will also consider the integrability of infinitesimal Killing spinor deformations.
This is essentially the problem of deforming Sasaki-Einstein metrics.  We give some sufficient conditions for integrating
these infinitesimal deformations.  A deeper sufficient condition for deforming Sasaki-Einstein metrics is K-polystability
(see~\cite{vanCoTip15}), but here we merely give some sufficiency results using analytic methods.

\subsection{Integrability on Sasaki-Einstein manifolds}

We state a result from~\cite{vanCo12} giving a sufficient condition for deforming Sasaki-Einstein structures.
Let $(M,g,\eta,\xi,\Phi)$ be a Sasaki-Einstein structure, and let $G\subseteq G'=\Aut(g,\eta,\xi,\Phi)$ be a
compact subgroup.  We consider $G$-equivariant deformations of the foliation $(\mathscr{F}_\xi,\ol{J})$.
We have the $G$-equivariant Dolbeault complex
\begin{equation}\label{eq:Dol-comp-G}
 0\rightarrow \mathcal{A}_G^{0} \overset{\ol{\partial}_b}{\longrightarrow}\mathcal{A}_G^{1} \overset{\ol{\partial}_b}{\longrightarrow}\mathcal{A}_G^{2}\rightarrow\cdots,
\end{equation}
with $\mathcal{A}_G^k =\Gamma(\Lambda^{0,k}_b\otimes\nu(\mathscr{F})^{1,0})^G$ the subspace of $G$-invariant sections.
Then $H^1(\mathcal{A}_G^\bullet)$ gives the first order deformations of $(\mathscr{F}_\xi,\ol{J})$ preserving the action
of $G$.  We saw in Proposition~\ref{prop:vers-def} that the versal deformation space $\mathcal{U}$ is smooth.  The space
of $G$-equivariant deformations $\mathcal{U}^G \subseteq\mathcal{U}$ is a submanifold with tangent space
$H^1(\mathcal{A}_G^\bullet)\subseteq H^1(\mathcal{A})$.  With respect to a fixed transversal K\"{a}hler structure
we have the $G$-invariant harmonic space $\mathcal{H}^1_{\mathcal{A},G}$ and
$H^1(\mathcal{A}_G^\bullet)\cong \mathcal{H}^1_{\mathcal{A},G}$.

If $(\mathscr{F}_\xi,\ol{J}_t)_{t\in\mathcal{V}}$ is a $G$-equivariant deformation, then one can show as in
Proposition~\ref{prop:vers-def-Sasaki} that there is a family of Sasakian structures $(g_t, \eta_t,\xi,\Phi_t),\ t\in\mathcal{V},$
with $G\subseteq \Aut(g_t,\eta_t,\xi,\Phi_t)$ where $\Phi_t$ induces the transversal complex structure $\ol{J}_t$.
Arguments using the implicit function theorem can show the following.
\begin{thm}[\cite{vanCo12}]\label{thm:def-Sasak-Einst}
Suppose $(M,g,\eta,\xi,\Phi)$ is a Sasaki-Einstein manifold.
Let $G\subseteq\Aut(g,\eta,\xi,\Phi)$ be a maximal torus, and let $(\mathscr{F}_\xi,\ol{J}_t)_{t\in\mathcal{V}}$ be
a $G$-equivariant deformation with $\mathcal{V}$ smooth.  Then after possibly shrinking $\mathcal{V}$, there is
a family $(g_t, \eta_t,\xi,\Phi_t),\ t\in\mathcal{V}$ of Sasaki-Einstein structures with
$(g_0, \eta_0,\xi,\Phi_0)=(g,\eta,\xi,\Phi)$ and with $\Phi_t$ inducing the transversal complex structure $\ol{J}_t$.
\end{thm}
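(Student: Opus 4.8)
The plan is to reduce the existence of the Sasaki--Einstein family to a transverse complex Monge--Amp\`ere problem and to solve that problem by a $G$-equivariant implicit function theorem, using the maximality of the torus to eliminate the obstruction coming from transverse holomorphic vector fields. First I would produce the underlying Sasakian family: applying the $G$-equivariant version of Proposition~\ref{prop:vers-def-Sasaki} to the given $(\mathscr{F}_\xi,\ol{J}_t)_{t\in\mathcal{V}}$ yields a smooth family of Sasakian structures $(g_t,\eta_t,\xi,\Phi_t)$ with $\Phi_t$ inducing $\ol{J}_t$ and $G\subseteq\Aut(g_t,\eta_t,\xi,\Phi_t)$; since the base structure is Sasaki--Einstein, the cohomological condition (\ref{eq:KE-cond}) is preserved, so $\pi c_1(\mathscr{F}_\xi,\ol{J}_t)=m[\omega_t^T]$ for all $t$. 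These $g_t$ are in general only transversally K\"ahler, and the task is to correct $g_t^T$ within its fixed basic K\"ahler class to a transverse K\"ahler--Einstein metric; by Proposition~\ref{prop:Sasaki-Ric}.\ref{eq:submer-Ric} and the reconstruction formula (\ref{eq:Sasak-def}) this is equivalent to making $g_t$ Sasaki--Einstein.

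Next I would set up the deformation equation. For a $G$-invariant basic potential $\phi$ the corrected transverse K\"ahler form is $\omega^T_\phi=\omega_t^T+\sqrt{-1}\partial_b\ol{\partial}_b\phi$, and the transverse K\"ahler--Einstein condition becomes a transverse complex Monge--Amp\`ere equation $\mathcal{F}(t,\phi)=0$ between H\"older spaces of $G$-invariant basic functions, with $\mathcal{F}(0,0)=0$ because $g_0$ is Sasaki--Einstein. Linearizing in $\phi$ at $(0,0)$ gives, up to a positive constant, the operator $\Delta_{\ol{\partial}_b}-2m$ acting on basic functions. Here the identity $\Ric_{g^T}=2mg^T$ enters through the transverse Bochner--Lichnerowicz formula: the first nonzero eigenvalue of $\Delta_{\ol{\partial}_b}$ is at least $2m$, with the $2m$-eigenspace consisting exactly of the basic holomorphy potentials, i.e. the Hamiltonians of the transversely holomorphic Killing fields. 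Thus, modulo constants, the kernel and cokernel of $D_\phi\mathcal{F}(0,0)$ are precisely these holomorphy potentials.

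The main obstacle, and the reason for assuming that $G$ is a \emph{maximal} torus, is exactly this cokernel---the classical Matsushima--Futaki obstruction to K\"ahler--Einstein metrics. I would argue as follows. By Matsushima's theorem the transversely holomorphic Killing fields generate the reductive part of the transverse automorphism algebra, and since $G$ is a maximal torus of $\Aut(g,\eta,\xi,\Phi)$ a $G$-equivariant deformation can only break the non-abelian automorphisms: the reductive automorphisms of the nearby $G$-invariant transverse complex structures lie in $\mathfrak{g}^{\C}$. Restricting the whole equation to $G$-invariant basic functions therefore removes the non-$G$-invariant holomorphy potentials from the cokernel, leaving only those corresponding to the fixed torus $\mathfrak{g}$, on which the relevant obstruction is the transverse Futaki invariant. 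Because the Reeb field $\xi$, the torus action, and the basic class (via (\ref{eq:KE-cond})) are all fixed along the family, the transverse Futaki invariant of $(\mathscr{F}_\xi,\ol{J}_t)$ is independent of $t$; it vanishes at $t=0$, where an honest Sasaki--Einstein metric exists, hence vanishes identically. This kills the residual obstruction.

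With the obstruction gone, $D_\phi\mathcal{F}(0,0)$ is an isomorphism on $G$-invariant H\"older spaces modulo constants, so the implicit function theorem produces, after possibly shrinking $\mathcal{V}$, a family $\phi_t$ depending smoothly on $t$ with $\mathcal{F}(t,\phi_t)=0$ and $\phi_0=0$. Elliptic regularity upgrades $\phi_t$ to be smooth jointly in $t$ and the point of $M$, and substituting the corrected transverse K\"ahler form back through (\ref{eq:Sasak-def}) (lifting $\ol{J}_t$ to $\ker\eta_t$ to obtain $\Phi_t$) yields the desired family $(g_t,\eta_t,\xi,\Phi_t)$ of Sasaki--Einstein structures with $(g_0,\eta_0,\xi,\Phi_0)=(g,\eta,\xi,\Phi)$. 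I expect the only genuinely delicate step to be the equivariant vanishing of the transverse Futaki invariant and the accompanying control of the reductive automorphisms of the nearby complex structures, which is where the maximality of $G$ is indispensable.
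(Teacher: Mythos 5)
Your reduction of the statement to a transverse K\"ahler--Einstein problem, and your identification of the obstruction space with the basic holomorphy potentials via $\ker(\Delta_{\ol{\partial}_b}-2m)$, are both sound, and your first paragraph matches the paper's construction of the underlying Sasakian family through (the $G$-equivariant version of) Proposition~\ref{prop:vers-def-Sasaki}. But your analytic route differs from the one the paper imports from \cite{vanCo12}: there one works with the \emph{reduced scalar curvature} $s^G_{t,\psi}=(\mathbb{1}-\pi^G_{t,\psi})(s_{t,\psi})$ and the map $\mathcal{S}(t,\psi)=\pi^W_0(s^G_{t,\psi})$ taking values in the orthogonal complement $W$ of the holomorphy potentials, so that the linearization $-2\mathbb{L}_g:W_{k+4,0}\rightarrow W_{k,0}$ is genuinely an isomorphism and the implicit function theorem applies directly. (Your Monge--Amp\`ere setup is in fact the one the paper uses for Theorem~\ref{thm:3Sasak-int-def}, where the kernel is killed by the real structure $\varsigma$ rather than by equivariance.)

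The genuine gap is in your implicit function theorem step. You correctly note that, modulo constants, the kernel and cokernel of $D_\phi\mathcal{F}(0,0)$ are the $G$-invariant holomorphy potentials. Since $G$ is abelian, the Hamiltonians of $G$ itself are $G$-invariant functions, and since a maximal torus is its own centralizer in the isometry group these span the residual kernel, which is positive-dimensional in every application the paper makes (e.g. the toric 3-Sasakian case, where the transverse torus is $T^2$). Vanishing of the transverse Futaki invariant does not remove this kernel --- it is an integral obstruction, not a statement about the spectrum of the linearized operator --- so the assertion that ``with the obstruction gone, $D_\phi\mathcal{F}(0,0)$ is an isomorphism on $G$-invariant H\"older spaces modulo constants'' is false, and the implicit function theorem as you invoke it does not apply. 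What is required is a Lyapunov--Schmidt reduction: solve the equation modulo the finite-dimensional space of torus Hamiltonians, and only then use the Futaki character to kill the residual finite-dimensional obstruction map. Relatedly, your justification of the constancy of the transverse Futaki invariant along the family (``the Reeb field, the torus and the basic class are fixed'') is an assertion, not an argument; the Futaki character depends on the transverse complex structure, and its invariance under the equivariant deformation needs a proof (e.g. equivariant localization or an algebraic Donaldson--Futaki computation). The reduced-scalar-curvature formulation of \cite{vanCo12} is designed precisely to avoid the first of these difficulties by projecting the equation onto $W$ from the outset.
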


This implies the following in terms of Killing spinors.
\begin{cor}\label{cor:def-Sasak-Einst}
Let $(M,g)$ be a spin Sasaki-Einstein manifold admitting the two defining Killing spinors $\sigma_j,\, j=0,1$, e.g. $M$ is
simply connected.  Then the infinitesimal Einstein deformations $h^\beta$, for $\beta\in\mathcal{H}^1_{\mathcal{A},G}$,
integrate to a family $g_t,\ t\in\mathcal{V}\subset\C^d,\ d=\dim_{\C}\mathcal{H}^1_{\mathcal{A},G}$, of Einstein
deformations preserving $\sigma_j,\, j=0,1$.

The components in $\EED(g)$ of $\{ v(g_t)\ |\ v\in T_0 \mathcal{V}\}$ are precisely the original infinitesimal Einstein
deformations $\{ h^\beta \ |\ \beta\in\mathcal{H}^1_{\mathcal{A},G}\}$.
\end{cor}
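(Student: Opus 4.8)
The plan is to obtain the family $g_t$ from Theorem~\ref{thm:def-Sasak-Einst} and then to match its velocity vectors, modulo the trivial deformations $\TED(g)$, with the tensors $h^\beta$ produced in Proposition~\ref{prop:inf-def-Sasaki}. First I would let $G\subseteq\Aut(g,\eta,\xi,\Phi)$ be a maximal torus and consider the $G$-equivariant versal deformation space $\mathcal{V}=\mathcal{U}^G$. By Proposition~\ref{prop:vers-def} the full versal space $\mathcal{U}$ is smooth, so $\mathcal{U}^G$ is a smooth submanifold with $T_0\mathcal{V}\cong H^1(\mathcal{A}_G^\bullet)\cong\mathcal{H}^1_{\mathcal{A},G}$, of complex dimension $d$. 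Theorem~\ref{thm:def-Sasak-Einst} then provides, after shrinking $\mathcal{V}$, a smooth family $(g_t,\eta_t,\xi,\Phi_t)$, $t\in\mathcal{V}$, of Sasaki-Einstein structures with $(g_0,\eta_0,\xi,\Phi_0)=(g,\eta,\xi,\Phi)$ and with $\Phi_t$ inducing $\ol{J}_t$; rescaling I may assume the $g_t$ have fixed volume.

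Next I would check that the two defining Killing spinors persist. Since $M$, its orientation, and its spin structure are unchanged along the family, the spinor bundles $\Sigma_{g_t}$ are canonically identified by the Bourguignon-Gauduchon construction of Section~\ref{subsubsec:spin-def}. Each $g_t$ being spin Sasaki-Einstein carries the two Killing spinors exhibited locally in the proof of Proposition~\ref{prop:inf-def-Sasaki}, namely a unit section $\sigma_0^t$ of $\Lambda^0$ and a section $\sigma_1^t$ of $\Lambda^{m,0}$ of the transversal complex structure $\ol{J}_t$. As these are built directly from the smoothly varying data $(g_t,\Phi_t)$, they give smooth families with $\sigma_j^0=\sigma_j$; thus the deformation preserves $\sigma_0$ and $\sigma_1$.

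It remains to identify the $\EED(g)$-components of the velocities. The Kodaira-Spencer map gives an isomorphism $T_0\mathcal{V}\xrightarrow{\sim}\mathcal{H}^1_{\mathcal{A},G}$, $v\mapsto\beta=\dot{\ol{J}}$. Differentiating (\ref{eq:Sasak-def}) at $t=0$ and using the relations (\ref{eq:Sasak-def1})--(\ref{eq:Sasak-def4}) between $\dot{\ol{J}}=I$, $\dot g^T=h$ and $\dot\omega^T=\phi$, the velocity $\dot g_t$ splits into a transversal piece $\dot g^T$ together with contributions of $\dot\eta$ and $\eta\otimes\dot\eta$. Under the identification of Corollary~\ref{cor:comp-Einst-def} the anti-Hermitian part of $\dot g^T$ is exactly $h^\beta$. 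The remaining pieces are trivial: the $\dot\eta$- and $\eta$-terms are Lie derivatives in the Reeb gauge, and since the transversal K\"{a}hler-Einstein metric in its fixed basic class (\ref{eq:KE-cond}) is unique up to automorphism, the Hermitian part of $\dot g^T$ carries no essential deformation (compare the remark following Lemma~\ref{lem:Einst-def-sub}). Projecting $\dot g_t$ onto $\EED(g)$ through Koiso's slice (Theorem~\ref{thm:Koi-premod}) therefore yields precisely $h^\beta$. Because $\beta\mapsto h^\beta$, the composite $\mathcal{H}^1_{\mathcal{A},G}\xrightarrow{\sim}\EED_A(g^T)\hookrightarrow\EED(g)$ of Corollary~\ref{cor:comp-Einst-def} and Lemma~\ref{lem:Einst-def-sub}, is injective, the image of $v\mapsto(\text{$\EED(g)$-component of }v(g_t))$ is exactly $\{h^\beta\mid\beta\in\mathcal{H}^1_{\mathcal{A},G}\}$, as claimed.

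The main obstacle I anticipate is this last identification: showing rigorously that the non-anti-Hermitian parts of $\dot g_t$ land in $\TED(g)=\im\delta_g^*$ rather than contributing extra essential directions. This requires pinning down the gauge in which the family $g_t$ is taken (equivalently, choosing the diffeomorphisms placing each $g_t$ in the slice $\mathscr{S}_g$) and invoking the uniqueness up to automorphism of the transversal K\"{a}hler-Einstein representative, so that the only surviving essential deformation is the anti-Hermitian $h^\beta$ coming from the genuine change $\dot{\ol{J}}=\beta$ of transversal complex structure.
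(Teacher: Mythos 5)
Your overall skeleton matches the paper's: invoke Theorem~\ref{thm:def-Sasak-Einst} for existence, note that any spin Sasaki--Einstein metric in the family carries the two defining Killing spinors, and then identify the velocities. But the last identification --- which you yourself flag as the main obstacle --- is exactly the content of the paper's proof, and the mechanism you propose for it does not work as stated. You want to show that the Hermitian part of $\dot g^T$ and the $\dot\eta$-contributions lie in $\TED(g)=\im\delta_g^*$ and then project onto $\EED(g)$; but a term of the form $\sqrt{-1}\partial_b\ol\partial_b\psi$ added to the transversal metric is \emph{not} a Lie derivative of $g$ in general, and ``uniqueness of the transversal K\"ahler--Einstein representative up to automorphism'' does not by itself place these terms in $\im\delta_g^*$. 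The paper avoids this projection argument entirely by a gauge normalization: since $[\omega^T_t]$ is constant, $\phi=\dot\omega^T$ is an exact basic $(1,1)$-form, so one may replace $\eta_t$ by $\eta_t+d^c\psi_t+d\theta_t$ (the $d\theta_t$ being absorbed by the gauge transformation $\exp(\theta_t\xi)$, which fixes basic tensors) to arrange $\dot\eta_t=0$ at $t=0$. In that gauge the velocity of the Sasakian family is \emph{identically} the anti-Hermitian tensor $h_{\alpha\beta}=\sqrt{-1}I_{\alpha\beta}=h^\beta\in\EED(g)$, with nothing left over to project away.

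The second, and equally essential, missing step is that the Einstein family of Theorem~\ref{thm:def-Sasak-Einst} is not the Sasakian family $(g_t,\eta_t,\xi,\Phi_t)$ itself but its correction $g_{t,\psi_t}$ by a transversal K\"ahler potential $\psi_t$ produced by the implicit function theorem applied to the reduced scalar curvature map $\mathcal{S}$. Its velocity is $\dot g_t$ \emph{plus} a term governed by $\dot\psi_t$, and you must rule out that this correction contributes new $\EED(g)$-directions. The paper does this by observing that $\dot g_t\in\EED(g)$ forces $\tfrac{d}{dt}s^G_{t,0}=0$ at $t=0$, so differentiating $\mathcal{S}(t,\psi_t)=0$ gives $\mathbb{L}_g\dot\psi_t=0$; since $d\mathcal{S}=-2\mathbb{L}_g$ is an isomorphism $W_{k+4,0}\to W_{k,0}$, one concludes $\dot\psi_t=0$ and hence $\dot g_{t,\psi_t}=\dot g_t=h^\beta$. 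Without these two concrete arguments --- the choice of gauge killing $\dot\eta_t$ and the vanishing of $\dot\psi_t$ --- the final claim of the corollary is not established.
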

\begin{proof}
Just the last statement remains to be proved.
Consider the family $(g_t,\eta_t,\xi,\Phi_t),\ t\in\mathcal{V}$ of Proposition~\ref{prop:vers-def-Sasaki}.
Using the notation of Section~\ref{subsec:def-versdef} and differentiating in the direction of some $v\in T_0 \mathcal{V}$ we have
\begin{gather}
  \phi_{\alpha\beta} =0 \label{eq:def-kah1} \\
  \phi_{\alpha\ol{\beta}} =\sqrt{-1}h_{\alpha\ol{\beta}} \label{eq:def-kah2}\\
  h_{\alpha\beta} =\sqrt{-1}I_{\alpha\beta}, \label{eq:def-kah3}
\end{gather}
which follow from (\ref{eq:Sasak-def4}), (\ref{eq:Sasak-def2}) and (\ref{eq:Sasak-def1}) respectively.
In the proof of Proposition~\ref{prop:vers-def-Sasaki} the basic cohomology class $[\omega^T_t]$ is constant.
Thus $\phi$ is an exact $(1,1)$-form.  We may replace $\eta_t$ with $\eta_t +d^c \psi_t$, so that using the same notation we have
$\frac{1}{2}d\dot{\eta}_t =\phi =0$.

The possible contact forms for a fixed Reeb vector field $\xi$ and transversal complex structure $\ol{J}_t$ are
$\eta_t +d^c \psi_t +d\theta_t$ for basic functions $\psi_t, \theta_t \in C_b^\infty(M)$.  See~\cite[Lemma 2.2.3]{vanCo12}, where
we also use that $\Ric^T >0$, which implies that the basic cohomology $H^1_b =H^1(M,\R)=\{0\}$.  And $d\theta_t$ is given by a gauge transformation $\exp(\theta_t \xi)^* \eta_t$, which fixes basic tensors.  Therefore, by adding a factor of $d\theta_t$ to $\eta_t$, we may arrange that $\dot{\eta}_t =0$.
We assume that the family $(g_t,\eta_t,\xi,\Phi_t),\ t\in\mathcal{V}$ is chosen so that $\dot{\eta}_t =0$ at $t=0$.
Thus the only component of $\dot{g}_t$ at $t=0$ is $h_{\alpha\beta}=\sqrt{-1}I_{\alpha\beta} \in\EED(g)$.

Recall that if $\psi \in C_b^\infty(M)$ is sufficiently small there is a Sasakian structure
$(g_{t,\psi}, \eta_{t,\psi},\xi,\Phi_{t,\psi})$ with contact form
$\eta_{t,\psi} =\eta_t +d^c \psi$ and transversal complex structure $\ol{J}_t$.  The metric is
\[ g_{t,\psi} =\frac{1}{2}d\eta_{t,\psi}(\cdot,\ol{J}_t \cdot) +\eta_{t,\psi} \otimes\eta_{t,\psi},\]
and $\Phi_{t,\psi}$ is the lift of $\ol{J}_t$ to $\ker\eta_{t,\psi}$.

Theorem~\ref{thm:def-Sasak-Einst} is proved by using the implicit function theorem to find
$\psi_t \in C_b^\infty(M),\ t\in\mathcal{V}$, so that the Sasakian structure $(g_{t,\psi}, \eta_{t,\psi},\xi,\Phi_{t,\psi})$
has scalar curvature $s_{t,\psi_t} =0$.  We review enough of the proof of Theorem~\ref{thm:def-Sasak-Einst} to prove
the corollary.  For more details see~\cite{vanCo12}.

We consider the $G$-invariant Sobolev space $L^2_{k+4,G}(M),\ k>m$, of $k+4$ times weakly differentiable functions.
For $\psi\in L^2_{k+4,G}(M)$ small we have the Sasakian structure with metric $g_{t,\psi}$ as above.
We have the space of holomorphy potentials $\mathcal{H}^{\mathfrak{g}}_{t,\psi}$ for this metric where $\mathfrak{g}$ is
the Lie algebra of $G$ (cf.~\cite{vanCo12}).
Using the metric $g_{t,\psi}$ to define the $L^2$ inner product on $L^2_{k,G}(M)$ we have the orthogonal
decomposition
\[ L^2_{k,G}(M)=\sqrt{-1}\mathcal{H}^{\mathfrak{g}}_{t,\psi} \oplus W_{k,t,\psi}, \]
and the projections
\[ \pi^G_{t,\psi} :L^2_{k,G}(M)\rightarrow\sqrt{-1}\mathcal{H}^{\mathfrak{g}}_{t,\psi},\text{  and  }
\pi^W_{t,\psi} :L^2_{k,G}(M)\rightarrow W_{k,t,\psi}. \]
The reduced scalar curvature of $g_{t,\psi}$ is given by
\begin{equation}
s^G_{t,\psi} =\pi^W_{t,\psi}(s_{t,\psi})=(\mathbb{1} -\pi^G_{t,\psi})(s_{t,\psi}).
\end{equation}

Let $U\subset\mathcal{V}\times L^2_{k+4,G}(M)$ be a neighborhood of $(0,0)$ so that for $(t,\psi)\in U$,
$(g_{t,\psi}, \eta_{t,\psi},\xi,\Phi_{t,\psi})$ is well defined.
For $\mathcal{U}=U \cap\bigl(\mathcal{V}\times W_{k+4,0} \bigr)$ we define a map
\begin{equation}\label{eq:def-map}
\begin{array}{rccc}
\mathcal{S}: & \mathcal{U} & \rightarrow &  W_{k,0}\\
			 & (t,\psi) & \mapsto & \pi^W_{0}(s^G_{t,\psi}).
\end{array}
\end{equation}
The derivative of (\ref{eq:def-map}) is
\begin{equation}\label{eq:def-map-der}
 d\mathcal{S} : W_{k+4,0} \rightarrow W_{k,0},
\end{equation}
with $d\mathcal{S}(\dot{\psi}) = -2\mathbb{L}_g \dot{\psi}$.  Here $\mathbb{L}_g $ is the self-adjoint operator
\begin{equation*}
\mathbb{L}_g \psi =\frac{1}{2}\Delta_b^2 \psi +\frac{1}{2}(\Ric^T, dd^c \psi)+ \frac{1}{2}(d\psi,ds_g).
\end{equation*}
As proved in~\cite[Cor. 4.2.5]{vanCo12} there is a family $\psi_t,\ t\in\mathcal{U},$ with
\begin{equation}\label{eq:def-map2}
\mathcal{S}(t,\psi_t) =\pi^W_{0}(s^G_{t,\psi_t})=0.
\end{equation}
Since $\dot{g}_t \in\EED(g)$ it is easy to check that $\frac{d}{dt} s^G_{t,0}=0$ at $t=0$.  Then differentiating
(\ref{eq:def-map2}) at $t=0$ gives $-2\mathbb{L}_g \dot{\psi}_t =0$.  But (\ref{eq:def-map-der}) is an isomorphism,
so $\dot{\psi}_t =0$ at $t=0$.  Therefore at $t=0$ we have $\dot{g}_{t,\psi_t}=\dot{g}_t$ which is
$h_{\alpha\beta}=\sqrt{-1}I_{\alpha\beta} \in\EED(g)$.
\end{proof}

We will give an application of Theorem~\ref{cor:def-Sasak-Einst} in Section~\ref{subsec:examples}.

\subsection{Integrability on 3-Sasakian manifolds}

We can prove integrability of many of the transversal infinitesimal deformations on a 3-Sasakian manifold.
The infinitesimal deformations of the real subspace $\re \mathcal{H}^1_{\mathcal{A}}(\xi)\subset\mathcal{H}^1_{\mathcal{A}}(\xi)$ with respect to the real structure $\varsigma: \mathcal{H}^1_{\mathcal{A}}(\xi)\rightarrow \mathcal{H}^1_{\mathcal{A}}(\xi)$ induced
by the anti-holomorphic real structure $\varsigma:Z\rightarrow Z$ integrate to Einstein deformations preserving the
existence of precisely two Killing spinors.

\begin{thm}\label{thm:3Sasak-int-def}
Let $(M,g),\ \dim M=4m-1,$ be a 3-Sasakian manifold, and denote by $\sigma_i,\ i=0,\ldots, m$ the Killing spinors associated to the 3-Sasakian structure.  Then the infinitesimal Einstein deformations $h^\beta$ of $g$ for $\beta\in\re \mathcal{H}^1_{\mathcal{A}}(\xi)$
in Proposition~\ref{prop:inf-def-Sasaki} integrate to a family $g_t,\ t\in\mathcal{N}\subset\R^d,\ d=\dim_{\C} \mathcal{H}^1_{\mathcal{A}},$ of Einstein deformations of $g$ preserving $\sigma_0$ and $\sigma_m$ but not the remaining.
The components in $\EED(g)$ of $\{v(g_t)\ |\ v\in T_0 \mathcal{N}\}$ are precisely the original infinitesimal Einstein deformations
$\{h^\beta\ |\ \beta\in\mathcal{H}^1_{\mathcal{A}}(\xi)\}$.
\end{thm}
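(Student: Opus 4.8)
The plan is to reduce the integration of the deformations $h^\beta$ for $\beta\in\re\mathcal{H}^1_{\mathcal{A}}(\xi)$ to an application of the Sasaki--Einstein deformation result, Theorem~\ref{thm:def-Sasak-Einst}, for a suitably chosen Reeb vector field $\xi\in\SR$ and a suitably chosen torus $G\subseteq\Aut(g,\eta,\xi,\Phi)$. Recall that a $3$-Sasakian structure determines, for each fixed $\xi\in\SR$, a transversely holomorphic foliation $\mathscr{F}_\xi$ whose leaf space is the twistor orbifold $Z$ carrying the anti-holomorphic real structure $\varsigma:Z\rightarrow Z$ with $\varsigma^*\theta=\ol{\theta}$. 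The real structure induces an involution $\varsigma$ on $\mathcal{H}^1_{\mathcal{A}}(\xi)$, and $\re\mathcal{H}^1_{\mathcal{A}}(\xi)$ denotes its fixed-point set. First I would observe that a class $\beta\in\re\mathcal{H}^1_{\mathcal{A}}(\xi)$ corresponds to a deformation $\ol{J}_t$ of the transversal complex structure that is \emph{compatible with the real structure $\varsigma$}; this compatibility is exactly the condition that distinguishes the deformations which preserve two Killing spinors from the general ones in Proposition~\ref{prop:inf-def-3Sasaki}, where only $\sigma_0$ and $\sigma_m$ survive to first order.

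\textbf{Applying the Sasaki--Einstein integration theorem.}
Next I would apply Theorem~\ref{thm:def-Sasak-Einst}, the main analytic input, to produce genuine Einstein deformations. The $\varsigma$-equivariant deformations of $(\mathscr{F}_\xi,\ol{J})$ form a smooth submanifold $\mathcal{V}$ of the versal space (smooth by Proposition~\ref{prop:vers-def}), with tangent space $\re\mathcal{H}^1_{\mathcal{A}}(\xi)$. One must verify that the real structure $\varsigma$ can be realized through a compact group of automorphisms so that the hypotheses of Theorem~\ref{thm:def-Sasak-Einst} apply: here the element $\exp(\tfrac{\pi}{2}\xi_2)$, which inverts $\xi_1$ and descends to $\varsigma$, together with a maximal torus $G\subseteq\Aut(g,\eta,\xi_1,\Phi_1)$, should play this role. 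Invoking the theorem then yields a family $(g_t,\eta_t,\xi,\Phi_t)$, $t\in\mathcal{N}\subset\re\mathcal{H}^1_{\mathcal{A}}(\xi)\cong\R^d$, of Sasaki--Einstein structures with $\Phi_t$ inducing $\ol{J}_t$. As in Corollary~\ref{cor:def-Sasak-Einst}, the two Killing spinors $\sigma_0,\sigma_m$ defining the Sasaki--Einstein structure are preserved along this family.

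\textbf{Identifying the preserved and destroyed spinors.}
I would then argue that for $t\neq 0$ the deformed metric $g_t$ is Sasaki--Einstein but no longer $3$-Sasakian, so that precisely $\sigma_0$ and $\sigma_m$ survive while $\sigma_1,\ldots,\sigma_{m-1}$ are destroyed. The ``only $\sigma_0,\sigma_m$ preserved'' direction is already forced infinitesimally by Proposition~\ref{prop:inf-def-3Sasaki}: the infinitesimal deformation $h^\beta$ is an infinitesimal deformation of the Killing spinors $\sigma_0$ and $\sigma_m$ but never of $\sigma_1,\ldots,\sigma_{m-1}$. The rigidity Theorem~\ref{thm:PedPo} of Pedersen--Poon guarantees that the family cannot remain $3$-Sasakian for $t\neq 0$ unless it is trivial; since the $h^\beta$ are nontrivial elements of $\EED(g)$, the structures $g_t$ must leave the $3$-Sasakian locus, losing the intermediate spinors.

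\textbf{Matching the infinitesimal directions.}
Finally, the computation of the $\EED(g)$-components of the tangent vectors $v(g_t)$ proceeds exactly as in the proof of Corollary~\ref{cor:def-Sasak-Einst}: using the relations (\ref{eq:Sasak-def4}), (\ref{eq:Sasak-def2}), (\ref{eq:Sasak-def1}) one normalizes the family so that $\dot{\eta}_t=0$ at $t=0$, whence the only surviving component of $\dot{g}_t$ is $h_{\alpha\beta}=\sqrt{-1}I_{\alpha\beta}\in\EED(g)$, recovering $h^\beta$. I expect the main obstacle to be the second step, namely verifying that the real structure $\varsigma$ genuinely fits the equivariance hypothesis of Theorem~\ref{thm:def-Sasak-Einst} while simultaneously ensuring that the resulting Sasaki--Einstein family carries no accidental extra symmetry that would keep it $3$-Sasakian; this is where the interplay between the $\Sp(1)$-action on $\SR$, the real structure on $Z$, and the choice of torus $G$ must be handled with care.
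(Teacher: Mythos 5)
Your overall architecture (integrate the real deformations, identify the surviving spinors via Proposition~\ref{prop:inf-def-3Sasaki} and Theorem~\ref{thm:PedPo}, and match tangent directions as in Corollary~\ref{cor:def-Sasak-Einst}) is the right shape, and the first, third and fourth steps agree with the paper. But the central analytic step --- producing the Sasaki--Einstein family --- has a genuine gap. You propose to invoke Theorem~\ref{thm:def-Sasak-Einst}, whose hypothesis is that the deformation of $(\mathscr{F}_\xi,\ol{J})$ is equivariant under a \emph{maximal torus} $G\subseteq\Aut(g,\eta,\xi,\Phi)$. For a general $3$-Sasakian manifold the classes in $\re\mathcal{H}^1_{\mathcal{A}}(\xi)$ carry no such torus-invariance (that holds in the toric case of Section~\ref{sec:examples}, where $H^1(Z,\Theta_Z)=H^1(Z,\Theta_Z)^{T^2}$, but not here), so the hypothesis simply is not available. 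Nor can the real structure be folded into the group $G$: the element $\exp(\tfrac{\pi}{2}\xi_2)$ sends $\xi_1$ to $-\xi_1$ and $\Phi_1$ to $-\Phi_1$, hence does not lie in $\Aut(g,\eta,\xi_1,\Phi_1)$ at all, and $\varsigma$-compatibility of $\ol{J}_t$ is an \emph{anti}-invariance condition ($\varsigma(\ol{J}_t)=-\ol{J}_t$), not invariance under a compact group of Sasakian automorphisms. So the reduction to Theorem~\ref{thm:def-Sasak-Einst} does not go through.

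What the paper does instead is a direct implicit-function-theorem argument on the twistor space $Z$. The obstruction to solving the transversal Monge--Amp\`ere equation is the kernel of $-\Delta_{\ol{\partial}}+4m$, which by Matsushima's theorem is the space of holomorphy potentials $\mathcal{H}_g$. The key point, which your proposal is missing, is that \emph{every} holomorphy potential is $\varsigma$-anti-invariant, $\varsigma^*f=-f$. This rests on Lemma~\ref{lem:quater-aut}: every quaternionic automorphism of $(\hat{M},\hat{g})$ is an isometry (excluding the space-form case), so $\hol(Z,\ol{J}_0)$ is the complexification of the $\varsigma$-real Killing fields, and their Hamiltonians are odd under $\varsigma$ because $\varsigma^*\omega=-\omega$. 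Consequently the linearized operator is an isomorphism on the space $C^{k,\alpha}(Z)_{\operatorname{sym}}$ of $\varsigma$-\emph{invariant} functions, and one solves the Monge--Amp\`ere equation there to get K\"ahler--Einstein metrics on $(Z,\ol{J}_t)$, which lift to Sasaki--Einstein structures on $M$. This is precisely why the restriction to $\re\mathcal{H}^1_{\mathcal{A}}(\xi)$ is what makes the integration work, rather than any torus-equivariance; without this vanishing-of-the-cokernel argument your proof does not close.
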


\begin{cor}
Let $(M,g),\ \dim M=4m-1,$ be a 3-Sasakian manifold with $d =\dim_{\C}H^1(\mathcal{A}^\bullet)$.  Then $g$ has a $d$-dimensional family of non-trivial deformations, $\{g_t \ |\ t\in\mathcal{N}\subset\R^d \}$, where $g_t,\ t\neq 0$, has a compatible Sasaki-Einstein structure but no 3-Sasakian structure.
\end{cor}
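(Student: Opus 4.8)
The plan is to obtain the corollary as an essentially immediate consequence of Theorem~\ref{thm:3Sasak-int-def}, once three points are checked: that the family is non-trivial, that each $g_t$ is Sasaki-Einstein, and that no $g_t$ with $t\neq 0$ is 3-Sasakian. First I would fix a Reeb field $\xi\in\SR$ and take $\{g_t\}_{t\in\mathcal{N}}$, $\mathcal{N}\subset\R^d$ with $d=\dim_{\C}H^1(\mathcal{A}^\bullet)=\dim_{\C}\mathcal{H}^1_{\mathcal{A}}$ (using the harmonic representation $H^1(\mathcal{A}^\bullet)\cong\mathcal{H}^1_{\mathcal{A}}$), to be exactly the family integrating the deformations $h^\beta$, $\beta\in\re\mathcal{H}^1_{\mathcal{A}}(\xi)$, furnished by Theorem~\ref{thm:3Sasak-int-def}. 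Non-triviality is recorded in the last sentence of that theorem: the $\EED(g)$-components of the velocities $\{v(g_t)\mid v\in T_0\mathcal{N}\}$ are precisely the $h^\beta$, and these are nonzero essential deformations lying outside $\TED(g)$, so the $g_t$ are genuinely distinct Einstein metrics modulo diffeomorphism for $t$ near $0$, and the family is effectively $d$-dimensional.

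Second, the Sasaki-Einstein property is built into the construction. The family $g_t$ arises, as in Corollary~\ref{cor:def-Sasak-Einst} and Theorem~\ref{thm:def-Sasak-Einst}, by deforming the transversal holomorphic structure $\ol{J}$ of the single Sasakian structure determined by the fixed $\xi\in\SR$ and lifting these to Sasaki-Einstein metrics; hence each $g_t$ carries a compatible Sasaki-Einstein structure with Reeb field $\xi$. Equivalently, since Theorem~\ref{thm:3Sasak-int-def} guarantees that $g_t$ preserves the two Killing spinors $\sigma_0$ and $\sigma_m$, one has $N_{+}(g_t)\geq 2$, and by B\"{a}r's classification (Table~\ref{tab:Kill-spin}) a metric in dimension $4m-1$ with two Killing spinors has cone holonomy inside $\SU(2m)$ (note $\Sp(m)\subseteq\SU(2m)$), i.e. is Sasaki-Einstein.

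Third, and this is the crux, I must rule out that $g_t$ is 3-Sasakian for $t\neq 0$. Were $g_{t}$ 3-Sasakian it would carry all $m+1$ Killing spinors, so $N_{+}(g_t)=m+1$. I would combine two facts. On the one hand $N_{+}$ is upper semi-continuous along the family $g_t$, so $N_{+}(g_t)\leq m+1$ near $t=0$; if this value were attained on a whole neighborhood, then the dimension being locally constant, the spaces of Killing spinors would vary smoothly and every $\sigma_j$, including $\sigma_1,\dots,\sigma_{m-1}$, would extend to a smooth family of Killing spinors of $g_t$, producing an infinitesimal deformation of each $\sigma_j$ and contradicting Proposition~\ref{prop:inf-def-3Sasaki}. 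On the other hand, to exclude $g_t$ being 3-Sasakian at isolated parameters, I would invoke the Pedersen--Poon rigidity Theorem~\ref{thm:PedPo}: a deformation through 3-Sasakian metrics is trivial, so a 3-Sasakian $g_{t}$ near $g_0$ could only arise from a trivial deformation, forcing the essential velocity into $\TED(g)$ and again contradicting that its $\EED(g)$-component is the nonzero $h^\beta$. Either way $N_{+}(g_t)<m+1$, so $g_t$ admits no 3-Sasakian structure.

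The main obstacle is precisely this last step: ensuring that the Killing spinors $\sigma_1,\dots,\sigma_{m-1}$ that are destroyed infinitesimally are not silently replaced by new Killing spinors that would restore the 3-Sasakian condition. The infinitesimal computation of Proposition~\ref{prop:inf-def-3Sasaki} controls only the persistence of the given $\sigma_j$, so the global conclusion relies on coupling it with upper semi-continuity of $N_{+}$ and the rigidity of Theorem~\ref{thm:PedPo}. I would also note that for $m=1$ (dimension $3$) the statement is vacuous, since $H^1(\mathcal{A}^\bullet)=0$ there, so the content lies entirely in the range $m\geq 2$.
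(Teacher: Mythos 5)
Your proposal is correct and follows essentially the same route as the paper: the corollary is stated there as an immediate consequence of Theorem~\ref{thm:3Sasak-int-def}, whose proof already builds the compatible Sasaki--Einstein structures $(g'_t,\eta'_t,\xi,\Phi'_t)$ by construction, gets effectiveness from the statement that the $\EED(g)$-components of the velocities are exactly the nonzero $h^\beta$, and rules out a compatible 3-Sasakian structure for small $t\neq 0$ by the Pedersen--Poon rigidity Theorem~\ref{thm:PedPo}, just as you do. Your additional upper-semicontinuity argument via Proposition~\ref{prop:inf-def-3Sasaki} is a harmless supplement the paper does not need.
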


Recall that the quotient of $M$, $\dim M =4m+3$, by the action of $\Sp(1)$-action generated by $\{\xi_1,\xi_2,\xi_3 \}$
is a quaternion-K\"{a}hler orbifold $(\hat{M},\hat{g}),\ \dim\hat{M}=4m$.  If $m\geq 2$, this means
there is a three dimensional bundle $\mathcal{J}\subset\End(TM)$ which is locally spanned by almost complex structures
$\hat{J}_i,\ i=1,2,3$ satisfying the quaternionic identities which is preserved by the Levi-Civita connection of $\hat{g}$.
This is equivalent to the existence of a 1-integrable $\Sp(m)\Sp(1)$-structure on $\hat{M}$.
The O'Neill formulas of the submersion $\pi: M\rightarrow\hat{M}$ show that $(\hat{M},\hat{g})$ is Einstein with constant
$\lambda =4m+8$.  If $m=1$, every oriented manifold
satisfies this with $\mathcal{J} =\Lambda^2_+$.  A 4-dimensional quaternion-K\"{a}hler orbifold $(\hat{M},\hat{g})$ is defined to be oriented and satisfy $W^+_{g} \equiv 0$ and $\Ric_g =\lambda g$.

We will consider a weaker condition, that of a quaternionic structure (cf.~\cite{Sal86}).
\begin{defn}\label{defn:quatern-str}
A \emph{quaternionic structure} on $\hat{M}$, of dimension $4m,\ m\geq 2$, is a three dimensional subbundle
$\mathcal{J}\subset\End(T\hat{M})$ which is locally spanned by almost complex structures
$\hat{J}_i,\ i=1,2,3$ satisfying the quaternionic identities and preserved by a torsion-free connection on $T\hat{M}$.
This is equivalent to the existence of a 1-integrable $\GL(m,\Ha)\Sp(1)$-structure.

If $m=1$, then a quaternionic structure is defined to be a conformal class $[g]$ with an orientation on $\hat{M}$ satisfying
$W^+_{[g]} \equiv 0$.
\end{defn}

Part of the interest in quaternionic manifolds is due to an attractive twistor correspondence~\cite{Sal84}.
If $(\hat{M},\mathcal{J})$ is a
$4m$-dimensional quaternionic manifold, then the \emph{twistor space} is $Z=\mathbb{P}(\mathbf{E})$ where $\mathbf{E}$ is the locally defined complex 2-dimensional bundle associated to the complex 2-dimensional representation of the $\Sp(1)$-factor of
$\GL(m,\Ha)\Sp(1)$.  Then $Z$ is a $2m+1$-dimensional complex manifold with
a family of twistor lines $\cps^1$ with normal bundle $\mathcal{O}_{\cps^1}(1)^{\oplus 2m}$ and an anti-holomorphic involution
$\varsigma:Z\rightarrow Z$ preserving the \emph{real} twistor lines.  Conversely, if $Z$ is a $2m+1$-dimensional complex manifold with
a family of twistor lines $\cps^1$ with normal bundle $\mathcal{O}_{\cps^1}(1)^{\oplus 2m}$ and an anti-holomorphic involution
$\sigma:Z\rightarrow Z$, then a connected component of real twistor lines is a $4m$-dimensional manifold with a quaternionic structure.
Since the twistor correspondence is natural, if $(M,\mathcal{J})$ is a quaternionic orbifold we may define the twistor space over each uniformizing chart as for manifolds and quotient by the orbifold group.

We say that a diffeomorphism of a quaternionic manifold(orbifold) $F:\hat{M}\rightarrow\hat{M}$ is a quaternionic automorphism if the derivative of $F$ preserves the bundle $\mathcal{J}$, or equivalently preserves the $\GL(m,\Ha)\Sp(1)$-structure.
The following is essentially different proof of a result of C. LeBrun~\cite[Corollary C]{LeB95}, but we need to consider the case in which $(\hat{M},\hat{g})$ is an orbifold.
\begin{lem}\label{lem:quater-aut}
Let $(\hat{M},\hat{g})$ be a quaternion-K\"{a}hler manifold or orbifold whose associated 3-Sasakian space $M$ is smooth.
If $(\hat{M},\hat{g})$ admits a quaternionic automorphism which is not an isometry,
then $(\hat{M},\hat{g})$ is locally isometric to $\qps^m$ with the symmetric metric.  Thus
$(\hat{M},\hat{g})\overset{\operatorname{isom}}{\cong}\Gamma\backslash\qps^m ,\ \Gamma\subset\Sp(m+1)$.
\end{lem}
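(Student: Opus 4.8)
The plan is to pass to the twistor space of $(\hat{M},\hat{g})$ and reduce the statement to a rigidity property of Fano K\"{a}hler--Einstein complex contact orbifolds. Since $(\hat{M},\hat{g})$ has positive scalar curvature, its twistor space is the leaf space $Z$ of the Reeb foliation $\mathscr{F}_{\xi_1}$, a Fano K\"{a}hler--Einstein complex contact orbifold of complex dimension $2m+1$ with contact line bundle $\mathbf{L}\cong\mathbf{K}_Z^{-1/(m+1)}$, $\mathbf{L}$-valued contact form $\theta$, and anti-holomorphic real structure $\varsigma$. The hypothesis that the associated 3-Sasakian space $M$ is \emph{smooth} is exactly what lets me run the twistor machinery in the orbifold setting: $M$ is the total space of a smooth (orbifold) circle fibration $M\to Z$, so $Z$ has only quotient singularities, along which the K\"{a}hler--Einstein and contact data extend, and the Hodge-theoretic vanishing and Matsushima-type statements below hold verbatim on $Z$. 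Under the twistor correspondence of Definition~\ref{defn:quatern-str} a quaternionic automorphism of $\hat{M}$ lifts to a $\varsigma$-real holomorphic contact automorphism of $Z$, while a quaternionic \emph{isometry} corresponds to such an automorphism that in addition preserves the K\"{a}hler--Einstein metric $g_Z$.

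First I would convert the hypothesis into an infinitesimal symmetry. A compact group of quaternionic automorphisms preserves an averaged quaternion-K\"{a}hler metric, which in the positive case is $\hat{g}$ up to scale; hence $\Isom(\hat{M},\hat{g})$ is a maximal compact subgroup of $\Aut(\hat{M},\mathcal{J})$. If some quaternionic automorphism is not an isometry, then $\Aut(\hat{M},\mathcal{J})$ strictly contains its maximal compact and is therefore non-compact, so $\dim\Aut(\hat{M},\mathcal{J})>\dim\Isom(\hat{M},\hat{g})$. This produces a nonzero $\varsigma$-real holomorphic contact vector field $V$ on $Z$ that is not a Killing field of $g_Z$. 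Because $Z$ is Fano and K\"{a}hler--Einstein, Matsushima's theorem (valid for KE Fano orbifolds) gives $\mathfrak{aut}(Z)=\mathfrak{isom}(Z,g_Z)\oplus\sqrt{-1}\,\mathfrak{isom}(Z,g_Z)$, so the non-Killing part of $V$ is the complex gradient of a nonconstant real holomorphy potential $f$. Contracting with $\theta$ realizes $V$ as a nonzero section $\theta(V)\in H^0(Z,\mathcal{O}(\mathbf{L}))$, and the contact Poisson bracket makes $H^0(Z,\mathcal{O}(\mathbf{L}))$ the Lie algebra of the contact automorphism group.

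The crux, and what I expect to be the main obstacle, is the contact rigidity step: a Fano K\"{a}hler--Einstein contact orbifold carrying a $\varsigma$-real \emph{non-Killing} holomorphic contact vector field must be $\cps^{2m+1}$ with its standard contact structure. This is the content of LeBrun's theorem underlying~\cite[Corollary C]{LeB95}, and the point of the present lemma is to recover it when $Z$ is merely an orbifold. Following LeBrun, the flow of $\operatorname{grad}f$ gives a $\C^*$-action whose weight decomposition of $\mathfrak{g}=H^0(Z,\mathcal{O}(\mathbf{L}))$ under the Poisson bracket yields an $\mathfrak{sl}(2,\C)$-triple acting on $Z$ through contact transformations; since $b_2(Z)=1$, an analysis of the zero scheme of $\theta(V)$ together with the contact line structure forces the contact automorphism group to act transitively, identifying $Z$ with $\cps^{2m+1}$. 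The delicate part is that every ingredient (the Matsushima decomposition, the eigenvalue and Poisson computations, and the zero-scheme analysis) must be carried out on the possibly singular $Z$; smoothness of $M$ guarantees that the circle fibration $M\to Z$ and the induced identifications are compatible with each of these steps, and that the orbifold singularities of $Z$ are quotient singularities disjoint from the generic twistor lines, so that LeBrun's manifold arguments transfer.

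Finally I would descend the conclusion along the twistor fibration. From $Z\cong\cps^{2m+1}$ the twistor correspondence identifies the base as $\hat{M}$ locally isometric to $\qps^m$ with its symmetric quaternion-K\"{a}hler metric of positive constant quaternionic sectional curvature. Since $\qps^m$ is simply connected, complete and symmetric, a complete quaternion-K\"{a}hler orbifold with this local model is a global quotient of $\qps^m$ by a finite group of isometries, so $(\hat{M},\hat{g})\overset{\operatorname{isom}}{\cong}\Gamma\backslash\qps^m$ with $\Gamma\subset\Sp(m+1)$ finite, as claimed.
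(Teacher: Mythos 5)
Your overall strategy --- linearize the non-isometric quaternionic automorphism to a non-Killing holomorphic contact field on the twistor space $Z$ and then invoke a rigidity theorem for Fano K\"ahler--Einstein contact spaces --- is essentially a plan to re-run LeBrun's original proof of \cite[Corollary C]{LeB95} on the orbifold $Z$. The paper states explicitly that it gives an ``essentially different proof'' of LeBrun's result precisely because the orbifold case must be covered, and your write-up does not actually close that case: the step you yourself flag as ``the crux'' (the $\mathfrak{sl}(2,\C)$-triple, the zero-scheme analysis of $\theta(V)$, the transitivity of the contact automorphism group, and hence $Z\cong\cps^{2m+1}$) is only asserted to ``transfer'' to the singular $Z$ because $M$ is smooth. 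No mechanism is given for why the Mori-theoretic/zero-locus arguments survive quotient singularities, and the claim that the singularities are ``disjoint from the generic twistor lines'' is not justified. Since supplying exactly this orbifold extension is the entire content of the lemma, this is a genuine gap rather than a routine verification. There is also a secondary unproved ingredient in your first step: the averaging argument needs uniqueness of the positive quaternion-K\"ahler metric within a fixed quaternionic structure (again a statement whose orbifold version would have to be established), whereas no linearization is actually necessary.

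For contrast, the paper's proof never works on the singular spaces $\hat{M}$ or $Z$ at all. It lifts the given automorphism $F$ to $\ol{F}:M\to M$ on the smooth 3-Sasakian space, uses the Nitta--Takeuchi/LeBrun criterion to conclude $\hat{\eta}=\ol{F}^*\eta^c\neq\eta^c$, and obtains a second holomorphic symplectic form $\hat{\varpi}=d(r^2\hat{\eta})$ on the smooth Ricci-flat cone $C(M)$ alongside the parallel one $\varpi=d(r^2\eta^c)$. A K\"ahler identity with $\lambda=0$ plus integration by parts over the compact smooth $M$ (a Bochner argument) forces $\ol{\nabla}\hat{\varpi}=0$, so the cone holonomy fixes two independent $(2,0)$-forms of maximal rank, is reducible, and Gallot's theorem gives flatness, i.e.\ $M=\Gamma\backslash S^{4m+3}$ and $\hat{M}=\Gamma'\backslash\qps^m$. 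If you want to salvage your route, you would need to either prove the orbifold contact rigidity directly or replace that step with an argument, like the paper's, that lives entirely on the smooth total space $M$ or its cone.
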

\begin{proof}
Let $M\rightarrow\hat{M}$ be the $\Sp(1)$ or $\SO(3)$ orbifold bundle with $M$ the 3-Sasakian space associated to $\hat{M}$.
Suppose there is such a quaternionic automorphism $F:\hat{M}\rightarrow\hat{M}$, then $F$ lifts to a diffeomorphism
$\ol{F}:M\rightarrow M$ which maps each $\xi_i,\ i=1,2,3$ to itself and preserves the complex structure on the transverse space $Z$.
The complex contact form $\theta$ of $Z$ lifts to $\eta^c=\eta_2 +\sqrt{-1}\eta_3$.
Since $F:\hat{M}\rightarrow\hat{M}$ is an isometry if and only if the biholomorphism induced on $Z$ is complex contact~\cite{LeB89,NitTak87}, $\hat{\eta}=\ol{F}^* \eta^c \neq\eta^c$.  And $C(M)$ has two holomorphic symplectic forms
$\varpi =d(r^2 \eta^c)$ and $\hat{\varpi} =d(r^2 \hat{\eta})$.  If $\ol{\nabla}$ is the Levi-Civita connection of $(C(M),\ol{g})$,
then $\ol{\nabla}\varpi =0$. Note that both $\varpi$ and $\hat{\varpi}$ are of order 2 with respect to the Euler vector field $r\partial_r$.  Since $\ol{\nabla}_{\partial_r} \partial_r =0$ and $\ol{\nabla}_{r\partial_r} X=X$ for a vector field $X$ on $M$
viewed as a vector field on $C(M)$, it is easy to check that $\ol{\nabla}_{\partial_r} \hat{\varpi}=0$.

We have the following formula on a K\"{a}hler-Einstein manifold with Einstein constant $\lambda$
\begin{equation}
\ol{\nabla}^{\beta}\ol{\nabla}_{\beta}\hat{\varpi}_{\alpha_1 \alpha_2} =\ol{\nabla}^{\ol{\beta}}\ol{\nabla}_{\ol{\beta}} \hat{\varpi}_{\alpha_1 \alpha_2} +
2\lambda \hat{\varpi}_{\alpha_1 \alpha_2}.
\end{equation}
Since $\lambda=0$ and $\hat{\varpi}$ is holomorphic, we have $\ol{\nabla}^{\beta}\ol{\nabla}_{\beta} \hat{\varpi}_{\alpha_1 \alpha_2} =\ol{\nabla}^{\ol{\beta}}\ol{\nabla}_{\ol{\beta}} \hat{\varpi}_{\alpha_1 \alpha_2} =0$.
Consider $TC(M)|_M$ as an Hermitian vector bundle on $M$ and denote by $\nabla$ the connection $\ol{\nabla}$ restricted to $M$.
Then $\nabla^*\nabla\hat{\varpi} =\ol{\nabla}^*\ol{\nabla}\hat{\varpi} =0$ and
\begin{equation*}
\begin{split}
0 & =\int_M \langle \nabla^*\nabla\hat{\varpi} ,\hat{\varpi} \rangle\, \mu_{g} \\
  & =\int_M \langle \nabla\hat{\varpi} ,\nabla\hat{\varpi} \rangle\, \mu_g.
\end{split}
\end{equation*}
Therefore $\ol{\nabla}\hat{\varpi} =0$.  So the holonomy of $(C(M),\ol{g})$ stabilizes two linearly independent $(2,0)$-forms
of maximal rank, and the holonomy of the universal cover $\tilde{C(M)}$ is reducible.  It follows from~\cite[Prop. 3.1]{Gal79} that $\tilde{C(M)}$ is flat.  Thus $M$ is isometric to a space form $\Gamma\backslash\operatorname{S}^{4m+3}$.
\end{proof}

\begin{proof}[Proof of Theorem]
Fixing a $\xi\in\SR$ we have the foliation $(\mathscr{F}_\xi,\ol{J})$ whose transversal space is the twistor space $Z$.
There is a subspace $\mathcal{N}\subset\mathcal{U}\subset H^1(\mathcal{A}^\bullet)$ of the versal deformation space of
$(\mathscr{F}_\xi,\ol{J})$ of \emph{real deformations}.  These are the deformations $\ol{J}_t$ for which
$\varsigma(\ol{J}_t)=-\ol{J}_t$.  By straightforward averaging one can choose the family of compatible Sasakian
structures in Proposition~\ref{prop:vers-def-Sasaki} $(g_t,\eta_t,\xi,\Phi_t)$ to satisfy
\begin{equation}\label{eq:sigma-inv}
\varsigma^* g_t =g_t, \quad \varsigma^* \eta_t =-\eta_t, \quad \varsigma_* \xi=-\xi,\quad \varsigma^*\Phi_t =-\Phi_t,
\end{equation}
for $t\in\mathcal{N}$.  In particular, we also have $\varsigma^*\omega^T =-\omega^T$.
For $t\in\mathcal{N}$ with respect to $(g_t,\eta_t,\xi,\Phi_t)$ we have
$\re H^1(\mathcal{A}^\bullet) =\re\mathcal{H}^1_{\mathcal{A}}(\xi)$ for the tangent space to $\mathcal{N}$ at $0$.
Therefore $(\mathscr{F}_\xi,\ol{J}_t)=(Z,\ol{J}_t)$ has a K\"{a}hler structure $\omega^T_t$, with
$\omega^T_t \in\frac{\pi}{2m} c_1(\mathscr{F}_\xi ,\ol{J}_0)$ depending smoothly on $t\in\mathcal{N}$
and $\Ricci(\omega^T_0)=4m\omega^T_0$.  Since the leaf space is an orbifold we will denote the transversal K\"{a}hler
space by $(Z,\ol{J}_t ,\omega_t)$.

Let $\mathfrak{g}$ be the Lie algebra of quaternionic automorphisms of $(\hat{M},\hat{g})$.
By the twistor correspondence, $\mathfrak{g}\cong\{X\in\hol(Z,J_0)| \varsigma_* X =X\}$.
Since $\mathfrak{g}$ is a real form of $\hol(Z,\ol{J}_0)$, $\mathfrak{g}\otimes\C =\hol(Z,\ol{J}_0)$.
By Lemma~\ref{lem:quater-aut} $\mathfrak{g}\subseteq\isom(\hat{M},\hat{g},\mathcal{J})$.  Thus
$\mathfrak{g}\subseteq\isom(Z,\omega_0,\ol{J}_0)$.
Since $(Z,\omega_0,\ol{J}_0)$ is K\"{a}hler-Einstein the results of Y. Matsushima~\cite{Mat57} show that
$\isom(Z,\omega_0,\ol{J}_0)\subset\mathfrak{g}\otimes\C$ is a real form, so $\mathfrak{g}=\isom(Z,\omega_0,\ol{J}_0)$.

Recall that $f\in C^\infty(Z,\C)$ is a \emph{holomorphy potential} if
\[\partial^{\#}f :=(\ol{\partial}f)^{\#}=\sum_{i,j}\frac{\partial f}{\partial\ol{z}^{\ol{k}}} g^{\ol{k}j}\frac{\partial}{\partial z^j}\]
is holomorphic.  We define the space of normalized holomorphy potential functions,
\begin{equation}
\mathcal{H}_g :=\{f\in C^\infty(Z,\C)\ |\ f\text{ is Hamiltonian and }\int f\,\mu_g =0 \}.
\end{equation}
Suppose $W\in\Gamma(T^{1,0}Z)$ is holomorphic with $\re W=X \in\mathfrak{g}=\{Y\in\hol(Z,J_0)\ |\ \varsigma_* Y =Y\}$, so
$\mathcal{L}_X \omega =0$.  And let $f_W \in C^\infty(Z)$ be a symplectic Hamiltonian, with $\int_Z f_W \mu_g =0$, that is
\begin{equation}\label{eq:symp-Ham}
X\contr\omega =df_W
\end{equation}
Then
\begin{equation*}
\partial^{\#} f_W =\frac{1}{2} (df_W +\sqrt{-1}J^*df_W)^{\#} =\frac{1}{2}(JX+\sqrt{-1}X) =\frac{\sqrt{-1}}{2}W
\end{equation*}
From (\ref{eq:sigma-inv}) and (\ref{eq:symp-Ham}) we have $\varsigma^* df_W =-df_W$, and $\int_Z f_W \,\mu_g =0$ implies that
$\varsigma^* f_W =-f_W$.  Since $\mathcal{H}_g$ is the complexification of the real functions $f_W$ considered,
we have that $\varsigma^*f =-f$ for all $f\in\mathcal{H}_g$.

There are $F_t \in C^\infty(Z)$ depending smoothly on $t\in\mathcal{N}$ with
\begin{equation}\label{eq:Ricci-pot}
\sqrt{-1}\partial_t \ol{\partial}_t F_t =\Ricci(\omega_t) -4m\omega_t.
\end{equation}
Since $F_t $ is defined up to a constant, $\varsigma^* F_t =F_t +c_t$ for $t\in\re\mathcal{N}$.
But $\int (F_t -\varsigma^* F_t)\mu_{g_t} =0$, so $\varsigma^* F_t =F_t$.

Define $C^{k,\alpha}(Z)_{\operatorname{sym}}$ to be the H\"{o}lder space of functions $f$ with $\varsigma^*f =f$.
The Monge-Amp\`ere equation
\begin{equation}\label{eq:M-A-eq}
\Psi(\varphi_t,t) =\log\Bigl(\frac{(\omega_t +\sqrt{-1}\partial_t\ol{\partial}_t \varphi_t)^{2m-1}}{\omega_t^{2m-1}} \Bigr) +4m\varphi_t =F_t,
\end{equation}
is $\varsigma$-invariant for $t\in\re\mathcal{N}$, and $\Psi$ defines a smooth map
\begin{equation}\label{eq:M-A-map}
\Psi :C^{k+2,\alpha}(Z)_{\operatorname{sym}} \times\re\mathcal{N} \rightarrow C^{k,\alpha}(Z)_{\operatorname{sym}}
\end{equation}
The differential of (\ref{eq:M-A-map}) is
\begin{equation}
D_{\varphi}\Psi(\dot{\varphi}) =\bigl(-\Delta_{\ol{\partial}} +4m\bigr)\dot{\varphi}.
\end{equation}
But it is a result of Y. Matsushima~\cite{Mat57} that $\mathcal{H}_g =\ker(\Delta_{\ol{\partial}}-\lambda)$, where $\lambda =4m$ is
the Einstein constant.
Thus $D_{\varphi}\Psi: C^{k+2,\alpha}(Z)_{\operatorname{sym}}\rightarrow C^{k,\alpha}(Z)_{\operatorname{sym}}$ is an isomorphism.
By the implicit function theorem, after possibly replacing $\mathcal{N}$ by a smaller neighborhood of $0$,
for $t\in\mathcal{N}$ there is a $\varphi_t \in C^{k+2,\alpha}(Z)_{\operatorname{sym}}$ with $\Psi(\varphi_t) =F_t$, and
\begin{equation}\label{eq:KE-metric}
\omega'_t =\omega_t +\sqrt{-1}\partial_t \ol{\partial}_t \varphi_t
\end{equation}
is K\"{a}hler-Einstein.  The well-known regularity results show that $\varphi_t \in C^{\infty}(Z)_{\operatorname{sym}}$.

Let $\pi: M_t \rightarrow Z_t$ be the $\U(1)$-bundle associated to either $\mathbf{K}_{Z_t}^{\frac{1}{m}}$ or $\mathbf{K}_{Z_t}^{-\frac{1}{2m}}$, depending on whether $(M,g)$ fibers over $(\hat{M},\hat{g})$ with generic $\SO(3)$ or $\Sp(1)$ fibers.  Choose the connection form on $M_t$ to be $\eta_t' =\eta_t +d_t^c \varphi_t$.
Then from (\ref{eq:KE-metric}) one has $\frac{1}{2}d\eta_t =\omega'_t$.
We get a Sasaki-Einstein structure $(g'_t,\eta'_t,\xi,\Phi'_t)$ on $M_t$ where
\begin{equation}
\tilde{g}'_t =\omega'_t(\cdot,\Phi'_t \cdot) +\eta'_t \otimes\eta'_t,
\end{equation}
and $\Phi'_t$ is the lift of $\ol{J}_t$ to $\ker\eta'_t$.

By Theorem~\ref{thm:PedPo} for small $t\in\mathcal{N}$, $(M, g'_t)$ has no compatible 3-Sasakian structure.

It remains to prove that the components in $\EED(g)$ of $\{v(g_t)\ |\ v\in T_0 \mathcal{N}\}$ are precisely the original infinitesimal Einstein deformations $\{h^\beta\ |\ \beta\in\mathcal{H}^1_{\mathcal{A}}(\xi)\}$.
Consider the family $(g_t,\eta_t,\xi,\Phi_t),\ t\in\mathcal{N}$ of Proposition~\ref{prop:vers-def-Sasaki}.
Using the notation of Section~\ref{subsec:def-versdef} and differentiating in the direction of some $v\in T_0 \mathcal{N}$ we have
\begin{gather}
  \phi_{\alpha\beta} =0 \label{eq:def-comp1} \\
  \phi_{\alpha\ol{\beta}} =\sqrt{-1}h_{\alpha\ol{\beta}} \label{eq:def-comp2}\\
  h_{\alpha\beta} =\sqrt{-1}I_{\alpha\beta}, \label{eq:def-comp3}
\end{gather}
which follow from (\ref{eq:Sasak-def4}), (\ref{eq:Sasak-def2}) and (\ref{eq:Sasak-def1}) respectively.
In the proof of Proposition~\ref{prop:vers-def-Sasaki} the basic cohomology class $[\omega^T_t]$ is constant.
Thus $\phi$ is an exact $(1,1)$-form.  We may replace $\eta_t$ with $\eta_t +d^c \psi_t$, so that using the same notation we have
$\frac{1}{2}d\dot{\eta}_t =\phi =0$.

The possible contact forms for a fixed Reeb vector field $\xi$ and transversal complex structure $\ol{J}_t$ are
$\eta_t +d^c \psi_t +d\theta_t$ for basic functions $\psi_t, \theta_t \in C_b^\infty(M)$.  See~\cite[Lemma 2.2.3]{vanCo12}, where
we also use that $\Ric^T >0$,  which implies that the basic cohomology $H^1_b =H^1(M,\R)=\{0\}$.  And $d\theta_t$ is given by a gauge transformation $\exp(\theta_t \xi)^* \eta_t$, which fixes basic tensors.  Therefore, by adding a factor of $d\theta_t$ to $\eta_t$, we may arrange that $\dot{\eta}_t =0$.

We suppose now that we have chosen $(g_t,\eta_t,\xi,\Phi_t),\ t\in\mathcal{N}$ as such.
Thus the only component of $h$ is $h_{\alpha\beta} =\sqrt{-1}I_{\alpha\beta}$, which is a transversal
infinitesimal Einstein deformation.  Differentiating (\ref{eq:Ricci-pot}) gives
\[ \sqrt{-1}\partial_b \ol{\partial}_b \dot{F}_t =0. \]
Then differentiating (\ref{eq:M-A-eq}) with respect to $t$ gives
\[ \bigl(-\Delta_{\ol{\partial}} +4m\bigr)\dot{\varphi}_t =0, \]
and it follows that $\dot{\varphi}_t =0$ at $t=0$.
Therefore $(g'_t,\eta'_t,\xi,\Phi'_t)$ gives the same first order Einstein deformation at $t=0$ as $(g_t,\eta_t,\xi,\Phi_t)$
which is $h_{\alpha\beta} =\sqrt{-1}I_{\alpha\beta}$.
\end{proof}

\section{Deformations on a 3-Sasakian manifold}

\subsection{Space of Deformations on a 3-Sasakian manifold}\label{subsec:3Sasak-def}

The space of Einstein deformations on a 3-Sasakian manifold constructed in Section~\ref{sec:S-E-KS-def} has an interesting
structure.  Suppose $(M,g)$ has a 3-Sasakian structure with Reeb vector fields $\xi_1,\xi_2,\xi_3$ satisfying
$[\xi_i,\xi_j]=-2\varepsilon^{ijk}\xi_k$ and space of Reeb fields $\SR$.

For $\xi\in\SR$ and $\beta\in\mathcal{H}^1_{\mathcal{A}}(\xi)$ we define $h^{\beta,\xi} \in\EED(g)$, where
$h^{\beta,\xi} (X,Y)=g^T(\ol{J}\beta X, Y)$ where we distinguish the particular Reeb vector field.
We have the following space of infinitesimal Einstein deformations
\begin{equation}\label{eq:3Sasak-def-space}
\EDS(g):= \sum_{\xi\in\SR} \{ h^{\beta,\xi}\ |\ \beta\in\mathcal{H}^1_{\mathcal{A}}(\xi)\} \subseteq\EED(g)
\end{equation}

We have a left action of $\Sp(1)$ on $(M,g)$ generated by $\xi_1,\xi_2,\xi_3$.
Since $\Sp(1)$ acts by isometries and on the space of Sasakian structures $\SR$, it acts on $\EDS(g)$, and
all the subspaces $\mathcal{H}^1_{\mathcal{A}}(\xi), \xi\in\SR$, are isomorphic.
The subspace $\mathcal{H}^1_{\mathcal{A}}(\xi_1)$ is preserved by $\xi_1$, so by elementary representation theory
\[ \dim_{\R} \EDS(g) =2\dim_{\C} \EDS(g) \geq 6\dim_{\C}\mathcal{H}^1_{\mathcal{A}}(\xi_1).\]

This $\Sp(1)$-action acts on $(C(M),J_1,J_2,J_3)$ by
quaternionic automorphisms.  That is, it preserves the bundle of quaternionic frames $L_{\Sp(m)Sp(1)}(C(M))$.
This lifts, via the spin structure to an action on $\tilde{L}_{\Sp(m)Sp(1)}(C(M))\subset L_{\Spin(4m)}(C(M))$ if m is even
or $\tilde{L}_{\Sp(m)\times Sp(1)}(C(M))\subset L_{\Spin(4m)}(C(M))$ if m is odd.
The Killing spinors are contained in the $\gamma_m$ factor of $\mathbb{S}^+_{4m}$  of (\ref{eq:spin-rep-quat}).
Thus $\Sp(1)$ acts on the Killing spinors via the representation of $\Sp(1)=\SU(2)$ on $\gamma_m =S^2(\mu_2)$.

We will consider a principal subbundle $E\subset L_{\Sp(m)Sp(1)}(C(M))$ with structure group
$\bigl(\Sp(m-1)\times\Sp(1)\bigr)\Sp(1)$ generated by all the local frames considered in the proof of
Proposition~\ref{prop:inf-def-3Sasaki}.  This subbundle is invariant under the isometric $\Sp(1)$-action.
In order to determine the $\Sp(1)$ action on spinors we consider the spin bundle
\[\Sigma = \tilde{E}\times_{\bigl(\Sp(m-1)\times\Sp(1)\bigr)\Sp(1)} \mathbb{S}^+_{4m}. \]
Importantly, the subspace of spinors, considered in the proof of Proposition~\ref{prop:inf-def-3Sasaki}, with precisely
one vector in $\Span_{\C}\{\varepsilon_{2m-1},\varepsilon_{2m}\}$ is preserved by $\bigl(\Sp(m-1)\times\Sp(1)\bigr)\Sp(1)$.

The $\Sp(1)$ action on $E$ is easily computed.  Given $a\in\Sp(1)$ and $u\in E$, write $a_* u=u\psi(a)$, then
\[ \psi(a) =\bigl((\cdot,kak^{-1}),a) \in \bigl(\Sp(m-1)\times\Sp(1)\bigr)\Sp(1)\]
is the factor acting non-trivially on the component of spinors with one vector in $\Span_{\C}\{\varepsilon_{2m-1},\varepsilon_{2m}\}$.
It will be useful that the spin bundle has the decomposition (\ref{eq:spin-rep-quat}) with the $\Sp(1)$-action
acting on the $\gamma_m, \gamma_{m-2},\ldots $ factors in the usual way with $\gamma_m$ being the space of Killing spinors.

We will need a lemma in the proofs of the main theorems.
\begin{lem}\label{lem:ind-def}
Suppose $\xi,\xi' \in\SR$.  If $\xi \neq\xi'$ and $\xi \neq -\xi'$, then
\[ \{ h^{\beta,\xi}\ |\ \beta\in\mathcal{H}^1_{\mathcal{A}}(\xi)\}\cap\{ h^{\beta,\xi'}\ |\ \beta\in\mathcal{H}^1_{\mathcal{A}}(\xi')\} =\{0\}. \]

Suppose that $m=2$, $\xi_1,\xi_2,\xi_3 \in\SR$ are linearly independent, and $\beta_i \in\mathcal{H}^1_{\mathcal{A}}(\xi_i), i=1,2,3$ are non-zero.  Then
\[ h^{\beta_1,\xi_1} + h^{\beta_2,\xi_2} + h^{\beta_3,\xi_3} \neq 0. \]
\end{lem}
\begin{proof}
Let $\sigma_k ,k=0,\ldots,m$ be the Killing spinors as in the proof of Proposition~\ref{prop:inf-def-3Sasaki} which
span the representation $\gamma_m$ of $\Sp(1)$.  More precisely, $\gamma_m \cong S^2(\C^2)$ where we identify
$\Sp(1)\cong\SU(2)$.  And under this identification each $\sigma_k$ is identified with $\binom{m}{k}e_1^k e_2^{m-k}$ where
$e_1,e_2$ are the standard basis of $\C^2$.  By acting by $\Sp(1)$ we may suppose that $\xi$ is $\xi_1$.

By Proposition~\ref{prop:inf-def-3Sasaki} the elements
$h^{\beta,\xi}$ preserve the spinors corresponding to the span of $e_1^m$ and $e_2^m$ but not the remaining.
Let $g\in\SU(2)$ be such that $g\xi =\xi'$.  Then the elements $h^{\beta,\xi'}$ preserve precisely the spinors
$g(e_1^m)$ and $g(e_2^m)$.  This is the same subspace as that spanned by $e_1^m$ and $e_2^m$ if and only if $g$ is in the
subgroup generated by the elements
\[ \begin{bmatrix} u & 0\\ 0 & \ol{u} \end{bmatrix},\ \text{such that}\ |u|=1,\ \text{ and }\ J=\begin{bmatrix} 0 & 1\\ -1 & 0 \end{bmatrix}. \]
This is precisely the subgroup fixing $\xi_1\in\rps^2$.

For the second part recall that $\gamma_2$ is a real representation, with real Killing spinors
\begin{equation}\label{eq:real-KS}
\begin{aligned}
\varsigma_0 & = 1 +\varepsilon_1 \wedge\varepsilon_2 \wedge\varepsilon_3 \wedge\varepsilon_4 \\
\varsigma_1 & = i\varepsilon_1 \wedge\varepsilon_2 +i\varepsilon_3 \wedge\varepsilon_4 \\
\varsigma_2 & = i-i\varepsilon_1 \wedge\varepsilon_2 \wedge\varepsilon_3 \wedge\varepsilon_4. \\
\end{aligned}
\end{equation}
Again we may assume that $\xi_1$ is the standard Reeb vector field, thus $h^{\beta_1,\xi_1}$ preserves $\sigma_0$ and $\sigma_2$.
Suppose $\xi_2 =a\xi_1$ and $\xi_3 =b\xi_1$ where $a,b\in\Sp(1)$.  By assumption
$\Span_{\R}\{\sigma_0,\sigma_2\}\cap\Span_{\R}\{a\sigma_0,a\sigma_2\}$ is 1-dimensional, and let $\sigma$ be a non-zero element.
Then $\sigma\notin\Span_{\R}\{b\sigma_0,b\sigma_2\}$.  Then
\[ \mathcal{L}( h^{\beta_1,\xi_1} + h^{\beta_2,\xi_2} + h^{\beta_3,\xi_3},\sigma) =\mathcal{L}(h^{\beta_3,\xi_3},\sigma)\neq 0 \]
by Proposition~\ref{prop:inf-def-3Sasaki}.
\end{proof}

\begin{prop}\label{prop:3Sasak-def-ele}
Let $(M,g)$ be 3-Sasakian with $\dim M=4m-1$.  Suppose $\xi,\xi' \in\SR$ with $\xi \neq\xi'$ and $\xi \neq -\xi'$.
And suppose $\beta\in\mathcal{H}^1_{\mathcal{A}}(\xi)$ and $\beta'\in\mathcal{H}^1_{\mathcal{A}}(\xi')$ are non-zero,
then
\[ h^{\beta,\xi} + h^{\beta',\xi'} \in\EDS(g) \]
is non-zero and preserves a 1-dimensional subspace of Killing spinors if $m=2$ and no Killing spinors if $m>2$.
\end{prop}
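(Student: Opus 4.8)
The plan is to treat the two assertions separately. \emph{Non-vanishing} is immediate: the assignment $\beta\mapsto h^{\beta,\xi}$ is injective, being the composite of the isomorphism of Corollary~\ref{cor:comp-Einst-def} with the inclusion of Lemma~\ref{lem:Einst-def-sub}, so both $h^{\beta,\xi}$ and $h^{\beta',\xi'}$ are nonzero. If their sum vanished, then $h^{\beta,\xi}=-h^{\beta',\xi'}$ would lie in $\{h^{\gamma,\xi}\mid\gamma\in\mathcal{H}^1_{\mathcal{A}}(\xi)\}\cap\{h^{\gamma',\xi'}\mid\gamma'\in\mathcal{H}^1_{\mathcal{A}}(\xi')\}=\{0\}$ by the preceding proposition, forcing $h^{\beta,\xi}=0$, a contradiction. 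Using the transitivity of the $\Sp(1)$-action on $\SR$ I would normalize $\xi=\xi_1$ and write $\xi'=g\cdot\xi_1$ with $g\in\SU(2)$ not in the stabilizer of $\xi_1\in\rps^2$.

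For the count of preserved Killing spinors, the key point is that the first-order preservation condition (\ref{eq:Sasak-Kill-def1}) of Proposition~\ref{prop:Kill-spin-inf} is linear in the deformation tensor and in the spinor. Denote by $\Xi_\zeta$ the resulting linear obstruction operator on the space of Killing spinors $S^{m}(\mu_{2})$ attached to a deformation $h^{\cdot,\zeta}$, so that $\sigma$ is preserved iff $\sigma\in\ker\Xi_\zeta$. Proposition~\ref{prop:inf-def-3Sasaki} gives $\Xi_\xi(\sigma_0)=\Xi_\xi(\sigma_m)=0$ and $\Xi_\xi(\sigma_k)\neq0$ for $1\le k\le m-1$; moreover the obstructions $\Xi_\xi(\sigma_k)$ carry distinct powers $\vartheta^{k-1}$ of the transverse complex symplectic form (compare (\ref{eq:3Sasak-def7}) and (\ref{eq:3Sasak-def10})) and are thus linearly independent, so $\ker\Xi_\xi$ is exactly the $2$-plane $V_\xi=\Span_{\C}(\sigma_0,\sigma_m)$ of extreme Killing spinors for the maximal torus of $\Sp(1)$ fixing $\xi$. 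By linearity $\sigma$ is preserved by $h^{\beta,\xi}+h^{\beta',\xi'}$ precisely when $\Xi_\xi(\sigma)=-\Xi_{\xi'}(\sigma)$, and the goal becomes to show this forces $\Xi_\xi(\sigma)=\Xi_{\xi'}(\sigma)=0$, i.e.\ $\sigma\in V_\xi\cap V_{\xi'}$.

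Granting that reduction, I would compute $V_\xi\cap V_{\xi'}$ representation-theoretically. In the weight basis for the $\xi_1$-torus, $\sigma_0=e_1^m$ and $\sigma_m=e_2^m$; writing $ge_1=ae_1+be_2$ one has $V_{\xi'}=g\cdot V_\xi=\Span_{\C}\bigl((ge_1)^m,(ge_2)^m\bigr)$, and $s(ge_1)^m+t(ge_2)^m$ lies in $V_\xi=\Span_{\C}(e_1^m,e_2^m)$ iff the coefficient of $e_1^{m-j}e_2^{j}$ vanishes for each $0<j<m$, i.e.\ $s/(-t)=(-\bar b)^{m-j}\bar a^{j}/(a^{m-j}b^{j})$. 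Consecutive right-hand sides differ by the factor $-|a|^2/|b|^2\neq1$, where $a,b\neq0$ since $\xi'\neq\pm\xi$; hence for $m>2$ the $m-1\ge2$ equations are inconsistent and $V_\xi\cap V_{\xi'}=\{0\}$, while for $m=2$ there is a single equation and $\dim_{\C}(V_\xi\cap V_{\xi'})=1$. For $m=2$ this is transparent: $S^{2}(\mu_{2})$ is the complexification of the standard representation $\R^3$ of $\SO(3)$, $V_\zeta=\zeta^{\perp}\otimes\C$, and $V_\xi\cap V_{\xi'}=\C\,(\xi\times\xi')$. This yields the asserted dimensions $1$ and $0$.

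The hard part is the transversality claim of the second paragraph; it suffices to prove $\im\Xi_\xi\cap\im\Xi_{\xi'}=\{0\}$, for then $\Xi_\xi(\sigma)=-\Xi_{\xi'}(\sigma)$ lies in this intersection and both sides vanish. I would establish it by writing both obstructions in a single $\xi_1$-adapted unitary frame---re-expressing $h^{\beta',\xi'}$ through the $\Sp(1)$-equivariance used in the preceding proposition---and inspecting (\ref{eq:3Sasak-def7}) and (\ref{eq:3Sasak-def10}): the obstruction $\Xi_\xi(\sigma_k)$ is tied to the Reeb direction $\xi$ through the covector $\varepsilon_{2m}^{\xi}$ (built from $\xi$ and $\partial_r$) together with $\Phi_2^{\xi}h^{\beta,\xi}$, whereas $\Xi_{\xi'}(\sigma_{k'})$ carries the distinct covector $\varepsilon_{2m}^{\xi'}$. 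Since $\xi\neq\pm\xi'$ these vertical directions genuinely differ, and matching $\vartheta$-degrees against this $\xi$- versus $\xi'$-dependence should force each summand to vanish. Carrying out this bookkeeping cleanly is the technical heart of the argument.
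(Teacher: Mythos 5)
Your argument is essentially the paper's: after normalizing $\xi=\xi_1$ and $\xi'=g\xi_1$, the paper also reduces the spinor count to computing $V_\xi\cap V_{\xi'}$ inside $S^m(\mu_2)$, expanding $(ge_1)^m$ and $(ge_2)^m$ in the weight basis and comparing middle coefficients. Your ratio computation (consecutive conditions differing by the factor $-|a|^2/|b|^2<0$) is a clean equivalent of the paper's comparison of the $k=1$ and $k=2$ coefficients, which yields $vw=ux$ and contradicts $\det g=1$; your version is if anything tidier, and your $m=2$ picture via $S^2(\mu_2)\cong\R^3\otimes\C$ and $V_\zeta=\zeta^\perp\otimes\C$ is a correct sanity check. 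The non-vanishing argument via Corollary~\ref{cor:comp-Einst-def}, Lemma~\ref{lem:Einst-def-sub} and the preceding intersection proposition is also fine (the paper does not even record it).

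The one point to be aware of: the step you isolate as ``the technical heart'' --- that $\Xi_\xi(\sigma)+\Xi_{\xi'}(\sigma)=0$ forces $\Xi_\xi(\sigma)=\Xi_{\xi'}(\sigma)=0$, so that the preserved subspace really is $V_\xi\cap V_{\xi'}$ rather than something larger --- is not supplied by the paper either. The published proof simply computes when $V_{\xi'}$ meets $V_\xi$ and identifies that intersection with the space of preserved spinors, which silently uses exactly the no-cancellation claim you flag. So you have not missed an ingredient that the paper provides; rather you have made explicit a gap the paper elides. Your proposed remedy ($\im\Xi_\xi\cap\im\Xi_{\xi'}=\{0\}$, to be checked by rewriting both obstructions, as in the proof of Proposition~\ref{prop:inf-def-3Sasaki}, in a single $\xi_1$-adapted frame) is a reasonable strategy, but since you do not carry it out, the $m>2$ assertion that \emph{no} Killing spinor is preserved remains incomplete in your write-up --- just as it does, strictly speaking, in the paper's own two-line argument. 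If you want a fully rigorous proof you should either execute that computation or find a structural reason (e.g.\ equivariance of $\Xi_\zeta$ under the torus fixing $\zeta$, which constrains the weight decomposition of its image) that the two obstruction images meet only in zero.
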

\begin{proof}
We may suppose that $\xi =\xi_1$ and $\xi'=\cos(t)\xi_1 +\sin(t)\xi_2,\ 0<t<\pi,$ after possibly transforming by $\Sp(1)$.
Then $\xi' =\exp(\frac{t}{2}\pi)_* \xi_1$.  Set $a=\exp(\frac{t}{2}\pi)\in\Sp(1)$.
By Lemma~\ref{lem:ind-def} $h^{\beta,\xi} + h^{\beta',\xi'}\neq 0$.
Set $h_1 =h^{\beta,\xi}$ and $h^{\beta',\xi'} =ah_2$ with $h_2 \in \mathcal{H}^1_{\mathcal{A}}(\xi_1)$.
Suppose
\begin{equation}
\begin{split}
0=\mathcal{L}(h^{\beta,\xi} + h^{\beta',\xi'},\sigma)(X) & =\mathcal{L}(h_1 +ah_2 ,\sigma)(X) \\
                                & =\mathcal{L}(h,\sigma)(X) +a\mathcal{L}(h_2,a^{-1}\sigma)(a^{-1}\sigma X).
\end{split}
\end{equation}
The component of interest in this is given by (\ref{eq:def-comp}) which is
\begin{equation}\label{eq:def-ele}
-\Phi_1 h(X)\xi_1 \cdot \sigma -h(X)\partial_r \cdot\sigma -\Phi' h'(X)\xi'\cdot\sigma-h'(X)\partial_r\cdot\sigma,
\end{equation}
where for shorthand $h=h_1,\ h'=h^{\beta',\xi'}$ and $\Phi' =\cos(t)\Phi_1 +\sin(t)\Phi_2$.  Here
$\sigma =c_0\sigma_0 +\cdots +c_m \sigma_m$ is an arbitrary Killing spinor.

We consider the case $m>2$ first.  We compute (\ref{eq:def-ele}) using the notation in the proof of
Proposition~\ref{prop:inf-def-3Sasaki}.  In particular,
\[ \sigma_k =\frac{1}{k!} \vartheta^k +\frac{1}{(k-1)!}\vartheta^{k-1} \wedge\varepsilon_{2m-1}\wedge\varepsilon_{2m}. \]

The first two terms of (\ref{eq:def-ele}) with $\sigma=\sigma_k$ are
\begin{equation}
\frac{2\sqrt{2}\sqrt{-1}}{(k-1)!}\bigl(\vartheta^{k-1}\wedge\Phi_2 h(X)^{0,1}\wedge\varepsilon_{2m}+ \vartheta^{k-1}\wedge h(X)^{1,0}
\wedge\varepsilon_{2m-1} \bigr).
\end{equation}
The second two terms are
\begin{equation}
\begin{split}
&-\bigl(\cos(t)\Phi_1 h'(X)+\sin(t)\Phi_2 h'(X)\bigr)\bigl(\frac{-\cos(t)}{\sqrt{2}}(\varepsilon{2m}+\varepsilon{\ol{2m}})
 +\frac{\sqrt{-1}\sin(t)}{\sqrt{2}}(\varepsilon_{2m-1}-\varepsilon_{\ol{2m-1}}) \bigr) \sigma_k \\
& -h'(X)\frac{\sqrt{-1}}{\sqrt{2}}(\varepsilon_{2m} -\varepsilon_{\ol{2m}})\sigma_k.
\end{split}
\end{equation}

After a routine computation we get that (\ref{eq:def-ele}) with $\sigma=\sigma_k$ is
\[\begin{split}
&-\frac{\sqrt{-1}\sqrt{2}\sin^2(t)}{k!}h'(X)^{1,0}\wedge\varepsilon_{2m}\vartheta^k
+\frac{\sqrt{-1}2\sqrt{2}\cos^2(t)}{(k-1)!}\wedge \Phi_2 h'(X)^{1,0}\wedge\varepsilon_{2m}\wedge\vartheta^{k-1} \\
& +\frac{\sqrt{-1}2\sqrt{2}\cos^2(t)}{(k-1)!}h'(X)^{1,0}\wedge\varepsilon_{2m-1}\vartheta^{k-1}
-\frac{\sqrt{-1}\sqrt{2}\sin^2(t)}{(k-2)!}\Phi_2 h'(X)^{1,0}\wedge\varepsilon_{2m-1}\wedge\vartheta^{k-2} \\
&+\frac{\sqrt{2}\sin(t)\cos(t)}{k!}h'(X)^{1,0}\varepsilon_{2m-1}\vartheta^k
+\frac{2\sqrt{2}\sin(t)\cos(t)}{(k-1)!}\Phi_2 h'(X)^{1,0}\wedge\varepsilon_{2m-1}\wedge\vartheta^{k-1} \\
&+\frac{2\sqrt{2}\sin(t)\cos(t)}{(k-1)!}h'(X)^{1,0}\wedge\varepsilon_{2m}\wedge\vartheta^{k-1}
+\frac{\sqrt{2}\sin(t)\cos(t)}{(k-2)!}\Phi_2 h'(X)^{1,0}\wedge\varepsilon_{2m}\wedge\vartheta^{k-2} \\
&+\frac{\sqrt{2}\sin(t)\cos(t)}{k!}\Phi_2 h'(X)^{1,0}\wedge\varepsilon_{2m}\wedge\vartheta^k
+\frac{\sqrt{2}\sin(t)\cos(t)}{(k-2)!}h'(X)^{1,0}\wedge\vartheta_{2m-1}\wedge\vartheta^{k-2} \\
&-\frac{\sqrt{-1}\sqrt{2}\sin^2(t)}{k!}\Phi_2 h'(X)^{1,0}\wedge\varepsilon_{2m-1}\wedge\vartheta^k
-\frac{\sqrt{-1}\sqrt{2}\sin^2(t)}{(k-2)!}h'(X)^{1,0}\wedge\varepsilon_2m \wedge\vartheta^{k-2} \\
&+\frac{\sqrt{-1}2\sqrt{2}}{(k-1)!}\Phi_2 h(X)^{1,0}\wedge\varepsilon_{2m}\wedge\vartheta^{k-1}
+\frac{\sqrt{-1}2\sqrt{2}}{(k-1)!}h(X)^{1,0}\wedge\varepsilon_{2m-1}\wedge\vartheta^{k-1}.\\
\end{split}\]
Consider the image of a general Killing spinor $\sigma =c_0\sigma_0 +\cdots +c_m \sigma_m$ under (\ref{eq:def-ele}).
In particular, consider its component of degree $2k+2$ given by this formula for $0\leq k\leq m-2$.
From the $\varepsilon_2m$ and $\varepsilon_{2m-1}$ components we get the following equations after some manipulation:
\[ \begin{split}
0 & =c_k(\sqrt{2}\sin(t)\Phi' h'(X)) \\
  & +c_{k+1}(2\sqrt{2}\cos^2(t)h'(X) -2\sqrt{2}\sin(t)\cos(t)\Phi_3 h'(X) +2\sqrt{2}h(X)) \\
  & +c_{k+2}(\sqrt{2}\sin(t)\Phi' h'(X)) \\
\end{split}\]
and
\[ \begin{split}
0 & =c_k(\sqrt{2}\sin(t)\Phi' h'(X)) \\
  & +c_{k+1}(-2\sqrt{2}\cos^2(t)h'(X) +2\sqrt{2}\sin(t)\cos(t)\Phi_3 h'(X) -2\sqrt{2}h(X)) \\
  & +c_{k+2}(\sqrt{2}\sin(t)\Phi' h'(X)) \\
\end{split}\]
From these we get $c_k + c_{k+2} =0$ and $c_{k+1}(\cos(t)\Phi' h'(X) +\Phi_1 h(X))=0$, which implies $c_{k+1}=0$
from Lemma~\ref{lem:ind-def}.  This implies $\sigma=0$ when $m>2$.

If $m=2$ then we have $c_1 =0$ and $c_0 +c_2 =0$.  So the only possible Killing spinors preserved by
$h^{\beta,\xi} + h^{\beta',\xi'}$ are spanned by the real spinor $\varsigma_2$.  And one easily sees that
$\mathcal{L}(h^{\beta',\xi'},\varsigma_2)=0$ since $\exp(tk)\varsigma_2 =\varsigma_2$.
\end{proof}

Recall that $\gamma_2$ is the real representation of $\Sp(1)$, and
easy calculation shows that the standard basis of $\LSp(1)$ acts as follows in the basis $\varsigma_0,\varsigma_1,\varsigma_2$
\[ i=\begin{bmatrix}0&0&2\\0&0&0\\-2&0&0\end{bmatrix},j=\begin{bmatrix}0&0&0\\0&0&2\\0&-2&0\end{bmatrix},
k=\begin{bmatrix}0&-2&0\\2&0&0\\0&0&0\end{bmatrix}.    \]

\begin{prop}\label{prop:3Sasak-def-7dim}
Let $(M,g)$ be a 7-dimensional 3-Sasakian manifold.  Suppose $\xi_1,\xi_2,\xi_3 \in\SR$ are linearly independent and
$\beta_k\in\mathcal{H}^1_{\mathcal{A}}(\xi_k)\ k=1,2,3$ are each non-nonzero.  Then
\[ h^{\beta_1,\xi_1} + h^{\beta_2,\xi_2} +h^{\beta_3,\xi_3} \in\EDS(g) \]
is non-zero and preserves no Killing spinors
\end{prop}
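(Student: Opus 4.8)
The plan is to reduce the statement to a piece of $\Sp(1)$-equivariant linear algebra on the three-dimensional space of Killing spinors, and then to isolate the single representation-theoretic fact distinguishing linearly independent from merely distinct Reeb fields. Since $\dim M = 7$ we have $m=2$, so by the discussion around (\ref{eq:spin-rep-quat}) the Killing spinors form the irreducible $\Sp(1)$-module $\gamma_2 = S^2(\mu_2)\cong\C^3$, on which $\Sp(1)=\SU(2)$ acts through $\SU(2)\to\SO(3)$ preserving the invariant quadratic form $Q$; a unit Reeb field $\xi\in\SR$ corresponds to the real unit vector $v_\xi\in\R^3\subset\C^3$, which is exactly the weight-$0$ vector (the middle spinor $\sigma_1$) for the stabilizer $\U(1)_\xi$. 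By Proposition~\ref{prop:inf-def-3Sasaki} a nonzero $h^{\beta,\xi}$ preserves to first order exactly the $\pm 2$-weight plane $P_\xi$ and not the line $\C v_\xi$; equivalently $P_\xi = v_\xi^{\perp_Q}$, so the functional on $\C^3$ cutting out $P_\xi$ is $Q(v_\xi,\,\cdot\,)$.

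First I would record that, by Proposition~\ref{prop:Kill-spin-inf} (equivalently, through (\ref{eq:3Sasak-def1})), the condition that a Killing spinor $\sigma$ be preserved to first order by a given infinitesimal Einstein deformation is \emph{linear} in the deformation and in $\sigma$. Writing $L_\bullet$ for the resulting obstruction operator, this gives $L_H = L_{h^{\beta,\xi}}+L_{h^{\beta',\xi'}}+L_{h^{\beta'',\xi''}}$, and each summand $L_{h^{\beta_i,\xi_i}}\colon\C^3\to W$ has rank one with kernel $P_{\xi_i}$, hence $L_{h^{\beta_i,\xi_i}}\sigma = Q(v_i,\sigma)\,w_i$ for a nonzero ``obstruction direction'' $w_i$ in the target $W$. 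Therefore $L_H\sigma = \sum_{i=1}^3 Q(v_i,\sigma)\,w_i$. Because $\xi,\xi',\xi''$ are linearly independent, $v_1,v_2,v_3$ span $\R^3$, so $\sigma\mapsto\bigl(Q(v_1,\sigma),Q(v_2,\sigma),Q(v_3,\sigma)\bigr)$ is an isomorphism $\C^3\to\C^3$; consequently the space of Killing spinors preserved by $H$ is zero \emph{if and only if} $w_1,w_2,w_3$ are linearly independent in $W$. (Viewed through this same reduction, the two-field computation of Proposition~\ref{prop:3Sasak-def-ele} — whose hypothesis $\xi\neq\pm\xi'$ is exactly independence of $v,v'$ — forces the preserved space down to the single line $P_{\xi_i}\cap P_{\xi_j}$, which is equivalent to pairwise independence of the $w_i$.)

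The main obstacle is to upgrade pairwise independence to independence of the full triple $w_1,w_2,w_3$; this is precisely where linear independence of $\xi,\xi',\xi''$, rather than mere distinctness, must enter, and indeed the statement would fail if the obstruction directions spanned only a $2$-dimensional $\Sp(1)$-submodule. To settle it I would use that $\xi\mapsto w_\xi$ is $\Sp(1)$-equivariant: after moving $\xi$ to $\xi_1$ by an element of $\Sp(1)$ I can read off from the explicit expressions (\ref{eq:3Sasak-def7}) and (\ref{eq:3Sasak-def10}) in the proof of Proposition~\ref{prop:inf-def-3Sasaki} the weight of $w_{\xi_1}$ under $\U(1)_{\xi_1}$. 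This identifies the submodule spanned by the obstruction directions as the irreducible $\gamma_2 = S^2(\mu_2)\cong\C^3$ and, tracking the correspondence $\sigma_1\leftrightarrow v_{\xi_1}$, shows that under this identification $w_\xi$ is proportional to $v_\xi$; that is, the induced equivariant map $\SR\to\mathbb{P}(\gamma_2)$ is nondegenerate. Hence $w_i = c_i v_i$ with $c_i\neq 0$, and linear independence of $v_1,v_2,v_3$ yields linear independence of $w_1,w_2,w_3$, so $H$ preserves no Killing spinor.

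Finally, the non-vanishing of $H$ is automatic: were $H=0$ it would preserve every Killing spinor, whereas we have just shown it preserves none; since the space of Killing spinors is nonzero, $H\neq 0$.
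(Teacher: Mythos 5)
Your first two paragraphs are a correct, and in fact sharper, rendering of what the paper actually does. The paper's proof picks the three lines $P_{\xi}\cap P_{\xi'}$, $P_{\xi'}\cap P_{\xi''}$, $P_{\xi}\cap P_{\xi''}$, observes that the sum acts non-trivially on a spinor spanning each (because the triple intersection vanishes when the Reeb fields are linearly independent), and stops there; in your notation this is exactly the verification that $L_H\sigma\neq 0$ whenever exactly one of the coefficients $Q(v_i,\sigma)$ is non-zero. Your reformulation $L_H\sigma=\sum_i Q(v_i,\sigma)\,w_i$ makes transparent that the full statement ``$\ker L_H=0$'' is equivalent to linear independence of the three obstruction directions $w_1,w_2,w_3$, a point the paper's proof does not address (non-vanishing of a linear map on a spanning set does not force trivial kernel). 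So you have correctly isolated the one step that actually needs an argument.

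The gap is in your third paragraph, where you try to supply that argument. The obstruction direction $w_i$ is not a function of $\xi_i$ alone: it depends linearly on the chosen harmonic class $\beta_i\in\mathcal{H}^1_{\mathcal{A}}(\xi_i)$, as one sees from (\ref{eq:3Sasak-def7}) and (\ref{eq:3Sasak-def10}), where the non-vanishing part of the obstruction is built out of $\Phi_2\, h^{\beta,\xi}(X)$ itself. Since $\beta,\beta',\beta''$ are arbitrary and independently chosen (and $\dim_{\C}\mathcal{H}^1_{\mathcal{A}}(\xi)$ may well exceed $1$), there is no single $\Sp(1)$-equivariant map $\SR\to\mathbb{P}(\gamma_2)$, $\xi\mapsto w_\xi$, through which all three $w_i$ factor; in particular the identification ``$w_i=c_i v_i$ inside one copy of $\gamma_2\subset\Gamma(T^*M\otimes\Sigma)$'' is not available, and with it falls the deduction of linear independence of the $w_i$ from that of the $v_i$. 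Equivariance does let you normalize one of the three Reeb fields to $\xi_1$, but it cannot simultaneously normalize the other two data $(\xi',\beta')$, $(\xi'',\beta'')$ into the same orbit. What remains true, and what you could legitimately import, is pairwise independence of the $w_i$ (equivalent to the exactness of the $1$-dimensional preserved space in Proposition~\ref{prop:3Sasak-def-ele}); but upgrading pairwise to full independence of the triple --- i.e.\ excluding a relation $w_3=aw_1+bw_2$ with $a,b\neq 0$, which would leave a preserved line --- is exactly the step that is not established, either by your equivariance argument or by the paper's spanning-set computation.
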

\begin{proof}
By Lemma~\ref{lem:ind-def} $h^{\beta_1,\xi_1} + h^{\beta_2,\xi_2} +h^{\beta_3,\xi_3}$ is non-zero, so
we need to show it preserves no Killing spinors.

For simplicity we assume that $\xi_k, k=1,2,3$ is an orthonormal basis, which we may assume to be the standard basis after
a possibly acting by $\Sp(1)$.
By considering the $\Sp(1)$-action on $\gamma_2$, we see that $\mathcal{H}^1_{\mathcal{A}}(\xi_2)$ preserves
$\varsigma_1,\varsigma_2$ and $\mathcal{H}^1_{\mathcal{A}}(\xi_3)$ preserves $\varsigma_0,\varsigma_1$.
Let $\sigma=c_0\varsigma_0 +c_1 \varsigma_1 +c_2 \varsigma_2$, and denote $h^{\xi_k} =h^{\beta_k,\xi_k}$
Then suppose
\begin{equation}
\begin{split}
0 = & \mathcal{L}(h^{\beta_1,\xi_1} + h^{\beta_2,\xi_2} +h^{\beta_3,\xi_3},\sigma) \\
  = & c_1\mathcal{L}(h^{\beta_1,\xi_1},\varsigma_1)+c_0\mathcal{L}(h^{\beta_2,\xi_2},\varsigma_0)
  +c_2\mathcal{L}(h^{\beta_3,\xi_3},\varsigma_2) \\
  = & -c_1(\Phi_1 h^{\xi_1}(X)\xi_1 \cdot\varsigma_1 +h^{\xi_1}(X)\partial_r \cdot\varsigma_1) \\
    & - c_0(\Phi_2 h^{\xi_2}(X)\xi_2 \cdot\varsigma_0 +h^{\xi_2}(X)\partial_r \cdot\varsigma_0) \\
    & -c_2(\Phi_3 h^{\xi_3}(X)\xi_3 \cdot\varsigma_2 +h^{\xi_3}(X)\partial_r \cdot\varsigma_2). \\
\end{split}
\end{equation}
Routine calculation gives
\[ \begin{split}
\Phi_1 h^{\xi_1}(X)\xi_1 \cdot\varsigma_1 +h^{\xi_1}(X)\partial_r \cdot\varsigma_1 & =
2\sqrt{2}(\Phi_2 h^{\xi_1}(X)^{1,0} \wedge\varepsilon_4 +h^{\xi_1}(X)^{1,0} \wedge\varepsilon_3) \\
 \Phi_2 h^{\xi_2}(X)\xi_2 \cdot\varsigma_0 +h^{\xi_2}(X)\partial_r \cdot\varsigma_0 & =
2\sqrt{2}\sqrt{-1}(\Phi_2 h^{\xi_2}(X)^{1,0} \wedge\varepsilon_3 +h^{\xi_2}(X)^{1,0}\wedge\varepsilon_4) \\
\Phi_3 h^{\xi_3}(X)\xi_3 \cdot\varsigma_2 +h^{\xi_3}(X)\partial_r \cdot\varsigma_2  & =
2\sqrt{2}(\Phi_2 h^{\xi_3}(X)^{1,0} \wedge\varepsilon_3 -h^{\xi_3}(X)^{1,0} \wedge\varepsilon_4) \\
\end{split} \]
Thus we have
\[\begin{split}
0 = & -c_1 2\sqrt{2}(\Phi_2 h^{\xi_1}(X)^{1,0} \wedge\varepsilon_4 +h^{\xi_1}(X)^{1,0} \wedge\varepsilon_3) \\
    & -c_0 2\sqrt{2}\sqrt{-1}(\Phi_2 h^{\xi_2}(X)^{1,0} \wedge\varepsilon_3 +h^{\xi_2}(X)^{1,0}\wedge\varepsilon_4) \\
    & -c_2 2\sqrt{2}(\Phi_2 h^{\xi_3}(X)^{1,0} \wedge\varepsilon_3 -h^{\xi_3}(X)^{1,0} \wedge\varepsilon_4). \\
\end{split}\]
The $\varepsilon_3$ component gives
\[ c_1 \Phi_1 h^{\xi_1}(X) -c_0 \Phi_2 h^{\xi_2}(X) +c_2 \Phi_3 h^{\xi_3}(X) =0.\]
Lemma~\ref{lem:ind-def} now implies that $c_0 =c_1 =c_2 =0$.
\end{proof}

This proves Corollary~\ref{corint:Einst-def-3S}.  By Theorem~\ref{thm:3Sasak-int-def} for any
$\beta\in\re \mathcal{H}^1_{\mathcal{A}}(\xi)$ the deformation $h^{\beta,\xi}$ is integrable.
By Proposition~\ref{prop:3Sasak-def-ele} for $m>2$, and Proposition~\ref{prop:3Sasak-def-7dim} for $m=2$
there are elements in the span of these elements preserving no Killing spinors.

\subsection{Toric 3-Sasakian manifolds}\label{subsec:examples}

The examples of toric 3-Sasakian 7-manifolds from~\cite{BGMR98} provide interesting examples of Einstein deformations, integrable
and infinitesimal, preserving various numbers of Killing spinors.  This will give non-trivial examples of the theorems
of the previous sections.

\begin{defn}
A 3-Sasakian manifold $(M,g),\ \dim M=4m-1,$ is \emph{toric} if there is a $T^m \subseteq\Aut(M,g,\xi_1,\xi_2,\xi_3)$.
\end{defn}
\begin{remark}
Note that a toric 3-Sasakian manifold is generally not toric as a Sasakian manifold.

The isometry group of a 3-Sasakian manifold is
\[ \Aut(M,g,\xi_1,\xi_2,\xi_3)\times\Sp(1)\ \text{ or }\ \Aut(M,g,\xi_1,\xi_2,\xi_3)\times\SO(3),\]
where the $\Sp(1)$ or $\SO(3)$ factor is generated by the Reeb vector fields.
\end{remark}

Toric 3-Sasakian manifolds have been constructed from 3-Sasakian quotients by torus actions on $S^{4n-1}$~\cite{ BoyGal99,BGMR98},
with the 3-Sasakian structure given by right multiplication by $\Sp(1)$.  A subtorus $T^k \subset T^n$ is determined by a weight matrix $\Omega_{k,n}\in\operatorname{Mat}(k,n,\Z)$.  There are conditions on
$\Omega$, C. Boyer, K. Galicki, B. Mann, E. Rees, 1998~\cite{BGMR98}, that imply the moment map
\[ \mu :S^{4n-1} \rightarrow(\mathfrak{t}^k)^*\otimes\R^3 \]
is a submersion, and further that the quotient
\[ M_{\Omega_{k,n}} = S^{4n-1}/\negthickspace/{T^k} =\mu^{-1}(0)/{T^k} \]
is smooth.   When $n=k+2$ the above authors showed there are infinitely many weight matrices in $\operatorname{Mat}(k,n,\Z)$
for $k\geq 1$ giving infinitely many 7-manifolds $M_{\Omega_{k,n}}$ for each $b_2 =k\geq 1$.

\begin{lem}[\cite{vanCo06}]
Let $Z$ be the twistor space of a toric 3-Sasakian 7-manifold $M$, then $H^1(Z,\Theta_{Z})=H^1(Z,\Theta_{Z})^{T^2}$ and
\[\dim_\C H^1(Z,\Theta_{Z})= b_2(M)-1=k-1.\]
Thus $Z$ has a local $b_2(M)-1$-dimensional space of deformations.
\end{lem}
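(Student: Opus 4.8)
The plan is to compute $H^1(Z,\Theta_Z)$ by exploiting the complex contact structure on $Z$ together with the residual torus symmetry coming from the 3-Sasakian quotient construction. Recall that $M=M_{\Omega_{k,k+2}}$ is obtained as a 3-Sasakian reduction $S^{4k+7}/\!\!/T^k$, so its twistor space $Z$ is the complex contact reduction of the twistor space $\cps^{2k+3}$ of $\qps^{k+1}$ by the induced $(\C^*)^k$-action. In particular $Z$ is a Fano contact orbifold of complex dimension $3$ with contact line bundle $\mathbf{L}\cong\mathbf{K}_Z^{-1/2}$, and it carries a holomorphic action of the residual torus $T^2=T^{k+2}/T^k$. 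I would compute $H^1(Z,\Theta_Z)$ through the contact distribution $D=\ker(\theta)\subset\Theta_Z$, reducing the problem to cohomology of line bundles that can be evaluated by vanishing theorems and the toric data.

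First I would use the two contact exact sequences. The contact form $\theta\colon\Theta_Z\to\mathbf{L}$ gives
\[ 0\to D\to\Theta_Z\xrightarrow{\theta}\mathbf{L}\to 0, \]
and, since $D$ has rank $2$ with $\det D=\mathbf{L}$ so that $D\cong D^*\otimes\mathbf{L}$, dualizing the contact form and twisting by $\mathbf{L}$ yields
\[ 0\to\mathcal{O}_Z\to\Omega^1_Z\otimes\mathbf{L}\to D\to 0. \]
Because $Z$ is Fano, Kodaira vanishing gives $H^i(Z,\mathcal{O}_Z)=0$ for $i>0$, so the second sequence identifies $H^1(Z,D)\cong H^1(Z,\Omega^1_Z\otimes\mathbf{L})$. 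Moreover the vanishing already invoked for the Pedersen--Poon rigidity, namely $H^1(Z,\mathbf{L})=H^1(Z,\mathbf{K}_Z\otimes(\mathbf{K}_Z^{-1}\otimes\mathbf{L}))=0$, forces $H^1(Z,\Theta_Z)$ to be a quotient of $H^1(Z,D)$. Combining, the first sequence gives
\[ H^1(Z,\Theta_Z)\cong\operatorname{coker}\bigl(H^0(Z,\mathbf{L})\to H^1(Z,\Omega^1_Z\otimes\mathbf{L})\bigr), \]
where $H^0(Z,\mathbf{L})$ is the space of contact vector fields, i.e. the complexified automorphisms of the 3-Sasakian structure.

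The core computation is therefore $H^1(Z,\Omega^1_Z\otimes\mathbf{L})$ together with the rank of the connecting map from $H^0(Z,\mathbf{L})$. I would evaluate this using the twistor fibration $\pi\colon Z\to\hat M$ over the self-dual Einstein $4$-orbifold $\hat M$: since $\mathbf{L}$ restricts to $\mathcal{O}(2)$ on the twistor lines and $\Omega^1_{Z/\hat M}\otimes\mathbf{L}$ is trivial on the fibers, the Leray spectral sequence expresses $H^1(Z,\Omega^1_Z\otimes\mathbf{L})$ through cohomology on $\hat M$ of the rank-three bundle $\pi_*\mathbf{L}=S^2H$ and of $\Omega^1_{\hat M}$. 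For the $T^2$-invariance I would decompose every term into weight spaces under the residual torus; the positivity of $\mathbf{K}_Z^{-1}\otimes\mathbf{L}$ together with the rigidity of the ambient $\cps^{2k+3}$ (where $H^1(\Theta)=0$) should force all nonzero weight spaces to vanish, so that $H^1(Z,\Theta_Z)=H^1(Z,\Theta_Z)^{T^2}$ consists entirely of invariant classes, which are precisely the ones arising from the combinatorial reduction data.

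The main obstacle is the final dimension count $\dim_\C H^1(Z,\Theta_Z)=b_2(M)-1=k-1$. This requires the explicit toric/GIT description of $Z$ as the contact reduction of $\cps^{2k+3}$ by $T^k$: one must track how the $k$ parameters of the reduction contribute invariant deformation classes and identify the single relation (equivalently, the image of $H^0(Z,\mathbf{L})$) that removes one dimension. I expect this bookkeeping---matching the invariant part of $H^1(Z,\Omega^1_Z\otimes\mathbf{L})$ modulo the contact Hamiltonians to $b_2(M)=k$ through the relation between $b_2(Z)$, $b_2(\hat M)$ and $b_2(M)$---to be the delicate step, since it is where the purely cohomological reductions must be fed the concrete combinatorics of the weight matrix $\Omega_{k,k+2}$.
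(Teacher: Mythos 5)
First, a point of comparison: the paper itself does not prove this lemma --- it is quoted from \cite{vanCo06} --- so there is no in-text argument to measure your proposal against, and I can only assess the proposal on its own terms. Your opening reductions are sound and are the standard ones for Fano contact threefolds: from $0\to D\to\Theta_Z\to\mathbf{L}\to 0$ and $0\to\mathcal{O}_Z\to\Omega^1_Z\otimes\mathbf{L}\to D\to 0$, together with $H^i(Z,\mathcal{O}_Z)=0$ for $i>0$ and $H^1(Z,\mathbf{L})=0$, one does obtain $H^1(Z,D)\cong H^1(Z,\Omega^1_Z\otimes\mathbf{L})$ and a long exact sequence computing $H^1(Z,\Theta_Z)$ from it.

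There is, however, a concrete error in where you plan to find the ``$-1$''. By LeBrun's contact-Hamiltonian isomorphism (\cite{LeB95}, Prop.~2.1, cited in this paper), every holomorphic section $f$ of $\mathbf{L}$ equals $\theta(X_f)$ for a globally defined holomorphic contact vector field $X_f$; hence $H^0(Z,\Theta_Z)\to H^0(Z,\mathbf{L})$ is surjective, the connecting map $H^0(Z,\mathbf{L})\to H^1(Z,D)$ is zero, and one gets $H^1(Z,\Theta_Z)\cong H^1(Z,D)\cong H^1(Z,\Omega^1_Z\otimes\mathbf{L})$ outright. The drop from $b_2(M)=k$ to $k-1$ therefore cannot be accounted for by ``the image of $H^0(Z,\mathbf{L})$''; it must be located inside $H^1(Z,\Omega^1_Z\otimes\mathbf{L})$ itself. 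Beyond this, the two assertions that constitute the lemma are left as expectations rather than proofs: the proposed Leray spectral sequence over $\hat{M}$ is not available in the holomorphic category, since the twistor projection $Z\to\hat{M}$ is not holomorphic and $\hat{M}$ carries no complex structure, so $\pi_*\mathbf{L}$ and $\Omega^1_{\hat{M}}$ are not coherent sheaves one can feed into it (one would need a Penrose-transform argument, or a direct computation from the description of $Z$ as a quotient of an intersection of quadrics in $\cps^{2k+3}$ by $(\C^*)^k$); the vanishing of the nonzero $T^2$-weight spaces is asserted to follow from positivity and the rigidity of the ambient space without an actual argument; and the count $\dim_\C H^1(Z,\Theta_Z)=k-1$ is explicitly deferred as bookkeeping. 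Since the $T^2$-invariance and the dimension are the entire content of the statement, what you have is a plausible outline of the first reductions, with one misplaced mechanism, rather than a proof.
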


If $b_2(M)\geq 1$, then the maximal torus of Sasakian automorphisms, $T^3 \subset\Aut(M,\xi_1)$, is 3-dimensional.
Theorem~\ref{thm:def-Sasak-Einst} implies the following.
\begin{thm}\label{thm:toric-3-Sasak-def}
Let $(M,g)$ be a toric 3-Sasakian 7-manifold.  Then $(M,g)$ has a 3-dimensional space of Killing spinors spanned by
$\sigma_0,\sigma_1,\sigma_2$.  Then $g$ is in an effective complex $b_2(M)-1$-dimensional family
$\{ g_t\}_{t\in\mathcal{U}},\ \mathcal{U}\subset\C^{b_2(M)-1}$ with $g_0 =g$, of Sasaki-Einstein metrics where $g_t$ is not
3-Sasakian for $t\neq 0$.

Therefore the deformations preserve a two dimensional subspace of Killing spinors spanned by $\sigma_0, \sigma_2$.
\end{thm}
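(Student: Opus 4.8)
The plan is to present $(M,g)$ as a Sasaki-Einstein manifold for the single Reeb field $\xi_1$, integrate its transversal deformations by the machinery of Theorem~\ref{thm:def-Sasak-Einst}, and then read off the surviving Killing spinors from Proposition~\ref{prop:inf-def-3Sasaki} while excluding the 3-Sasakian case via the rigidity Theorem~\ref{thm:PedPo}. First I would record the setup: since $\dim M = 7 = 4m-1$ we have $m=2$, and a toric 3-Sasakian manifold is simply connected, hence spin, so by the classification in Table~\ref{tab:Kill-spin} it carries the $N_+ = m+1 = 3$ Killing spinors $\sigma_0,\sigma_1,\sigma_2$, which in the notation of (\ref{eq:sym-form}) are $\sigma_k = \frac{1}{k!}\varpi^k$. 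Fixing $\xi_1$ presents $(M,g)$ as Sasaki-Einstein with leaf space the twistor orbifold $Z$, and the first-order transversal deformations $H^1(\mathcal{A}^\bullet)$ are precisely the orbifold deformations $H^1(Z,\Theta_Z)$, which by the preceding lemma (from~\cite{vanCo06}) has complex dimension $b_2(M)-1 = k-1$.

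Next I would check the equivariance hypothesis needed for integration. Let $G = T^3 \subset \Aut(M,\xi_1)$ be the maximal torus of Sasakian automorphisms already identified in the discussion above, containing the Reeb circle generated by $\xi_1$ together with the $T^2$ of the toric 3-Sasakian structure. Because the Reeb flow acts trivially on basic objects, the lemma's identity $H^1(Z,\Theta_Z) = H^1(Z,\Theta_Z)^{T^2}$ upgrades to $\mathcal{H}^1_{\mathcal{A}}(\xi_1) = \mathcal{H}^1_{\mathcal{A},G}$; that is, every transversal deformation is $G$-equivariant. The versal space is smooth by Proposition~\ref{prop:vers-def}, so Theorem~\ref{thm:def-Sasak-Einst} and Corollary~\ref{cor:def-Sasak-Einst} apply and integrate these deformations to a family $g_t$, $t\in\mathcal{U}\subset\C^{b_2(M)-1}$, of Sasaki-Einstein metrics with $g_0 = g$. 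Effectiveness follows since $\beta\mapsto h^\beta$ is injective (if $h^\beta = g^T(\ol{J}\beta\,\cdot,\cdot)=0$ then $\beta=0$), and by the last assertion of Corollary~\ref{cor:def-Sasak-Einst} the $\EED(g)$-components of the tangent directions are exactly the independent deformations $\{h^\beta \mid \beta\in\mathcal{H}^1_{\mathcal{A}}(\xi_1)\}$.

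Finally I would identify the surviving spinors and rule out the 3-Sasakian case. By Proposition~\ref{prop:inf-def-3Sasaki} with $m=2$, each $h^\beta$ preserves to first order exactly $\sigma_0$ and $\sigma_m = \sigma_2$ — the extremal weight vectors in the $\Sp(1)$-factor $\gamma_m$ of $\mathbb{S}^+_{4m}$ in (\ref{eq:spin-rep-quat}) — and never the middle spinor $\sigma_1$; since the integration in Corollary~\ref{cor:def-Sasak-Einst} preserves precisely the two defining Killing spinors of the Sasaki-Einstein structure for $\xi_1$, these are $\sigma_0$ and $\sigma_2$ along the whole family. For small $t\neq 0$ the metric $g_t$ cannot admit a compatible 3-Sasakian structure: by Theorem~\ref{thm:PedPo} such a structure would force the deformation to be trivial, contradicting effectiveness. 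Hence $g_t$ is Sasaki-Einstein but not 3-Sasakian with $N_+ = 2$, the surviving spinors being $\sigma_0,\sigma_2$. The main obstacle is the equivariance step — matching the $T^2$-invariant twistor deformations of~\cite{vanCo06} with the $G$-invariant basic deformations so that the Sasaki-Einstein integration theorem applies — together with correctly pinning down, via the $\Sp(1)$-representation $\gamma_m$, which two of the three spinors persist.
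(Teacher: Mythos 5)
Your proposal is correct and follows essentially the same route as the paper: the paper likewise deduces the theorem from the lemma of~\cite{vanCo06} (giving $\dim_\C H^1(Z,\Theta_Z)=b_2(M)-1$ and $T^2$-invariance, hence $G$-equivariance for the maximal torus $G=T^3$), integrates via Theorem~\ref{thm:def-Sasak-Einst} and Corollary~\ref{cor:def-Sasak-Einst}, identifies the surviving spinors $\sigma_0,\sigma_2$ through Proposition~\ref{prop:inf-def-3Sasaki}, and excludes the 3-Sasakian case by the rigidity Theorem~\ref{thm:PedPo}. The only difference is that the paper leaves most of these steps implicit (it states only that Theorem~\ref{thm:def-Sasak-Einst} implies the result), whereas you have written them out explicitly and accurately.
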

The deformation space of Sasaki-Einstein metrics with their isometry groups is illustrated in Figure~\ref{fig:def-space}.

\begin{figure}[t!]
\labellist
\hair 2pt
\pinlabel $\C^{b_2 -1}$ at 241 234
\pinlabel $\R^{b_2-1}$ [l] at 278 142
\pinlabel $T^3$ at 113 181
\pinlabel {$T^3 \rtimes\Z_2$} [r] at 40 109
\pinlabel {$T^2 \times\Sp(1)$} [l] at 173 145
\endlabellist
\centering
\includegraphics[scale=.5]{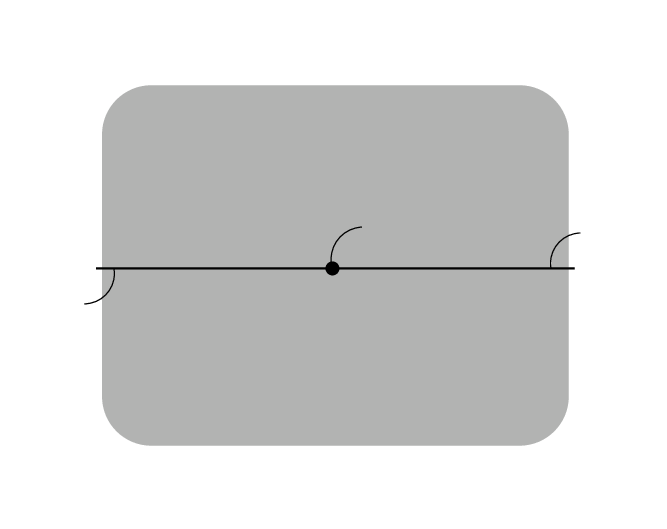}
\caption{Space of Sasaki-Einstein metrics}
\label{fig:def-space}
\end{figure}

For a given $\xi\in\SR$, the space of infinitesimal Einstein deformations
$\{ h^{\beta,\xi}\ |\ \beta\in\mathcal{H}^1_{\mathcal{A}}(\xi)\} \subseteq\EED(g)$
integrate to Einstein deformations preserving Killing spinors $\sigma_0$ and $\sigma_2$ but not $\sigma_1$.
Note that the space $\EDS(g)$ defined in (\ref{eq:3Sasak-def-space}) is spanned by integrable Einstein deformations.
Theorem~\ref{thmint: inf-Einst-tor-3S} now follows from Proposition~\ref{prop:3Sasak-def-ele} and Proposition~\ref{prop:3Sasak-def-7dim}.

\bibliographystyle{amsplain}

\providecommand{\bysame}{\leavevmode\hbox to3em{\hrulefill}\thinspace}
\providecommand{\MR}{\relax\ifhmode\unskip\space\fi MR }
\providecommand{\MRhref}[2]{%
  \href{http://www.ams.org/mathscinet-getitem?mr=#1}{#2}
}
\providecommand{\href}[2]{#2}

\end{document}